\newtheorem{thm}{Theorem}
\newtheorem{lem}{Lemma}[section]
\newtheorem{prop}{Proposition}[section]
\newtheorem{defn}{Definition}[section]
\newtheorem{cor}{Corollary}[section]
\newcommand{\bp}{\begin{proof}}
\newcommand{\ep}{\end{proof}}
\newcommand{\be}{\begin{enumerate}}
\newcommand{\ee}{\end{enumerate}}
\newcommand{\zz}{\mathbb{Z}} % Integers
\newcommand{\rr}{\mathbb{R}} % Reals
\newcommand{\nn}{\mathbb{N}} % Naturals
\newcommand{\dist}{\mathrm{dist}}
\newcommand{\Ker}{\mathrm{ker}}
\newcommand{\mx}[1]{\begin{smallmatrix} #1 \end{smallmatrix}} % inline matrix
\newcommand{\supp}{\mathrm{supp}}
\newcommand{\diam}{\mathrm{diam}}
\DeclareMathOperator{\sgn}{sgn}
\newcommand{\C}{\tilde{C}}
\newcommand{\A}{\mathcal{A}} 
\newcommand{\M}{\mathcal{M}}
\newcommand{\Z}{\mathcal{Z}}
\newcommand{\1}{\mathbbm{1}}
\newcommand{\B}{\mathcal{E}}
\newcommand{\chimn}{\zeta}
\DeclareFontFamily{U}{mathx}{\hyphenchar\font45}
\DeclareFontShape{U}{mathx}{m}{n}{
      <5> <6> <7> <8> <9> <10>
      <10.95> <12> <14.4> <17.28> <20.74> <24.88>
      mathx10
      }{}
\DeclareSymbolFont{mathx}{U}{mathx}{m}{n}
\DeclareMathAccent{\widecheck}{0}{mathx}{"71}
\newcommand\reallywidehat[1]{%
\savestack{\tmpbox}{\stretchto{%
  \scaleto{%
    \scalerel*[\widthof{\ensuremath{#1}}]{\kern-.6pt\bigwedge\kern-.6pt}%
    {\rule[-\textheight/2]{1ex}{\textheight}}%WIDTH-LIMITED BIG WEDGE
  }{\textheight}% 
}{0.5ex}}%
\stackon[1pt]{#1}{\tmpbox}%
}
\begin{document}

\title[Averaging operators on the Heisenberg group]{$L^p$ regularity for a class of averaging operators on the Heisenberg group}
\author[G. Bentsen]{Geoffrey Bentsen}
\address{Geoffrey Bentsen \\ Department of Mathematics \\ University of Wisconsin-Madison \\ WI 53706, USA}
\subjclass[2010]{35S30,42B20,42B35,44A12,46E35}
\email{gbentsen@wisc.edu}
\thanks{Research supported in part by NSF grant DMS 1764295}
\date{\today}

\begin{abstract}
    We prove $L^p_{comp}(\rr^3)\to L^p_{s}(\rr^3)$ boundedness for averaging operators associated to a class of curves in the Heisenberg group $\mathbb{H}^1$ via $L^2$ estimates for related oscillatory integrals and Bourgain-Demeter decoupling inequalities on the cone. We also construct a Sobolev space adapted to translations on the Heisenberg group to which these averaging operators map all $L^p$ functions boundedly. 
\end{abstract}
\maketitle

%%%%%%%%%%%%%%%%%%%%%%%%%%%%%%%%%%%%%%%%%%%%%%%%%%%%%%
\section{Introduction}
%%%%%%%%%%%%%%%%%%%%%%%%%%%%%%%%%%%%%%%%%%%%%%%%%%%%%%

Let $\mathbb{H}^1$ be the Heisenberg group, that is $\rr^3$ with the product
\begin{equation*}
    (x_1,x_2,x_3)\odot (w_1,w_2,w_3)=(x_1+w_1,x_2+w_2,x_3+w_3+\tfrac12(x_1w_2-x_2w_1)).
\end{equation*}
Let $\gamma:[0,1]\to \rr^3$ be a regular smooth curve (e.g. $C^\infty$ and $\gamma'\ne 0$) whose tangent vector is nowhere parallel to $(0,0,1)$. Then without loss of generality (though possibly with a reordering of the last two coordinates) we can write $\gamma(t)=(t,\gamma_2(t),\gamma_3(t))$, where $\gamma_2,\gamma_3\in C^\infty(\rr)$. Let $\mu$ be a smooth measure supported on $\gamma([0,1])$ and for $f\in \mathcal{S}(\rr^3)$ define the generalized averaging operator
\begin{align*}
    A f(x)&=\int f(\gamma(t)^{-1}\odot x)\, d\mu(t).
\end{align*}

We are interested in finding the regularity properties of $A$ in terms of Sobolev spaces. For Euclidean averaging operators over curves, $L^p$-Sobolev bounds are closely related to the curvature and torsion properties of $\gamma$; in particular if $\gamma'(t),\gamma''(t), \gamma'''(t)$ are linearly independent for $t\in[0,1]$, or equivalently
\[
    \det\left(\mx{\gamma_2''(t) & \gamma_3''(t) \\ \gamma_2'''(t) & \gamma_3'''(t)}\right)\ne 0,
\]
then the Euclidean averaging operator over the curve $\gamma$ is bounded from $L^p(\rr^3)$ to $L^p_{1/p}(\rr^3)$ for $p>4$, see \cite{PrSe07}. Here and throughout the paper, $L^p_s(\rr^3)$ denotes the standard Sobolev space on $\rr^3$ with respect to Lebesgue measure. A similar curvature and torsion condition is found in \cite{sec99}, where Secco investigated $L^p$-improving estimates for averaging operators over curves in $\mathbb{H}^1$ using the moment curve $\gamma(t)=(t,t^2,\alpha t^3)$ as a model case. In her analysis the best possible $L^p(\rr^3)\to L^q(\rr^3)$ bounds for $A$ occur when 
\begin{equation}\label{piLfold}
    \det\left(\mx{\gamma_2''(t) & \gamma_3''(t) \\ \gamma_2'''(t) & \gamma_3'''(t)}\right)+\tfrac12(\gamma_2''(t))^2\ne0,
\end{equation}
and
\begin{equation}\label{piRfold}
    \det\left(\mx{\gamma_2''(t) & \gamma_3''(t) \\ \gamma_2'''(t) & \gamma_3'''(t)}\right)-\tfrac12(\gamma_2''(t))^2\ne 0.
\end{equation}
In the case of the moment curve these conditions imply $\alpha\ne \pm1/6$. Secco showed that these conditions are equivalent to a group-invariant version of nondegenerate curvature and torsion adapted to right and left translations on the Heisenberg group. The work of Pramanik and Seeger (see \cite{PrSe19}) shows that if one assumes $\gamma$ satisfies \eqref{piLfold} and \eqref{piRfold}, the operator $A$ also maps boundedly from $L^p_{comp}(\rr^3)$ to $L^p_{1/p}(\rr^3)$ for $p>4$. In this paper we assume \eqref{piLfold}, but relax the assumption \eqref{piRfold} and instead consider the extreme case where \eqref{piRfold} does not hold anywhere. Equivalently, we consider the following.

\begin{thm}\label{mainthm}
    Suppose that $t\mapsto \gamma(t)=(t,\gamma_2(t),\gamma_3(t))$ is a curve such that $\gamma_2''(t)\ne 0$ and 
    \[
    \det\left(\mx{\gamma_2''(t) & \gamma_3''(t) \\ \gamma_2'''(t) & \gamma_3'''(t)}\right)=\frac12 (\gamma_2''(t))^2
    \]
    for all $t\in I$. Then $A$ maps boundedly from $L^p_{comp}(\rr^3)$ to $L^p_{1/p}(\rr^3)$ for $p>4$. 
\end{thm}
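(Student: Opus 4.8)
The plan is to prove the estimate first in the range $4<p\le 6$ and then recover all $p>4$ by interpolation: once $A\colon L^{p_0}_{comp}(\rr^3)\to L^{p_0}_{1/p_0}(\rr^3)$ is known for some $p_0\in(4,6]$, interpolating with the trivial bound $A\colon L^\infty_{comp}\to L^\infty$ gives the statement for all $p\ge p_0$ (Sobolev orders interpolate linearly to exactly $1/p$), and $4<p<p_0$ is contained in what is proved directly. The restriction to $L^p_{comp}$ is used only so that the $x$-dependent modulation appearing below has bounded frequency content and merely dilates the relevant Littlewood--Paley localizations by a fixed factor. Decompose $A=\sum_{k\ge 0}A_k$ with $A_k$ localizing the spatial frequency to $|\xi|\sim 2^k$. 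Since $\tfrac1p<\tfrac16+\tfrac1{3p}$ precisely when $p>4$, it suffices to prove, for each $p\in(4,6]$, a bound $\|A_k\|_{L^p\to L^p}\lesssim_\varepsilon 2^{\varepsilon k}2^{-\beta(p)k}$ with $\beta(p)=\tfrac16+\tfrac1{3p}$; then $A\colon L^p_{comp}\to L^p_{\beta(p)}\hookrightarrow L^p_{1/p}$ follows just by summing a geometric series against $2^{k/p}$, the strict inequality $\beta(p)>1/p$ absorbing the $2^{\varepsilon k}$ loss, with no square function needed. (Note $\beta(4)=\tfrac14$, which is why $p>4$ is sharp for this scheme.) Within $A_k$ one first disposes of the region $|\xi_3|\ll 2^k$: there $A_k$ agrees up to lower order with a \emph{Euclidean} average over the curve $t\mapsto(t,\gamma_2(t),\gamma_3(t))$, which under the hypotheses has nonvanishing curvature ($\gamma_2''\ne0$) and nonvanishing torsion (its torsion determinant is exactly $\det\left(\mx{\gamma_2'' & \gamma_3'' \\ \gamma_2''' & \gamma_3'''}\right)=\tfrac12(\gamma_2'')^2\ne0$), so that piece is handled by \cite{PrSe07}. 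The remaining, genuinely new region is $|\xi_3|\sim 2^k$.

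Two ingredients enter here, adapting and extending the analysis of \cite{PrSe19}. \emph{First, an $L^2$ estimate.} Taking the partial Fourier transform in $x_3$ with dual variable $\lambda$, $A_k$ becomes, uniformly in $|\lambda|\sim 2^k$, the oscillatory integral operator
\[
    T_\lambda g(x_1,x_2)=\int g(x_1-t,\,x_2-\gamma_2(t))\,e^{-i\lambda\varphi(x_1,x_2,t)}\,d\mu(t),\qquad \varphi=\gamma_3(t)+\tfrac12 tx_2-\tfrac12\gamma_2(t)x_1 .
\]
Here $\partial_t^2\varphi=\gamma_3''(t)-\tfrac12\gamma_2''(t)x_1$ vanishes on the curve $x_1=2\gamma_3''(t)/\gamma_2''(t)$, and \emph{there}
\[
    \partial_t^3\varphi=\frac{\det\left(\mx{\gamma_2'' & \gamma_3'' \\ \gamma_2''' & \gamma_3'''}\right)}{\gamma_2''(t)}=\tfrac12\gamma_2''(t)\ne0
\]
by the hypothesis; van der Corput's lemma then gives $t$-integral decay $|\lambda|^{-1/3}$ at worst (and $|\lambda|^{-1/2}$ off this fold curve), so by Plancherel in $x_3$ one gets $\|A_k\|_{L^2\to L^2}\lesssim 2^{-k/3}$, and the same bound for the pieces $A_{k,J}$ below. \emph{Second, cone geometry.} Because \eqref{piRfold} fails identically while \eqref{piLfold} holds (its left side equals $(\gamma_2'')^2\ne 0$), the projection $\pi_L$ from the canonical relation of $A$ still has a fold but $\pi_R$ degenerates more severely than a fold (a blowdown-type singularity), and this forces the frequency supports of the natural subpieces of $A_k$ to be plates tangent to a light cone in $\rr^3$. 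Concretely, partition the $t$-support of $\mu$ into $O(2^{k/3})$ intervals $J$ of length $2^{-k/3}$ (the cubic scale where the $|\lambda|^{-1/3}$ behavior sets in), write $A_k=\sum_J A_{k,J}$, and use the curve- and Heisenberg-dilation-adapted anisotropic rescaling of each piece to see that $\widehat{A_{k,J}f}$ is supported in a plate of the standard dimensions $\sim 2^k\times 2^{2k/3}\times 2^{k/3}$ tangent to that cone, with the plates essentially disjoint.

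With these in hand, the Bourgain--Demeter $\ell^2$ decoupling inequality for the cone in $\rr^3$, valid for $2\le p\le 6$, gives $\|A_k f\|_{L^p}\lesssim_\varepsilon 2^{\varepsilon k}\bigl(\sum_J\|A_{k,J}f\|_{L^p}^2\bigr)^{1/2}$. Writing $A_{k,J}f=A_{k,J}P_{\sim J}f$ with $P_{\sim J}$ the (enlarged) input-plate projection, interpolating $\|A_{k,J}\|_{L^2\to L^2}\lesssim 2^{-k/3}$ with $\|A_{k,J}\|_{L^\infty\to L^\infty}\lesssim 2^{-k/3}$ (the latter just the mass of $\mu_J$), and reassembling through $\bigl(\sum_J\|P_{\sim J}f\|_{L^p}^2\bigr)^{1/2}\lesssim_\varepsilon 2^{\varepsilon k}(2^{k/3})^{1/2-1/p}\|f\|_{L^p}$ (Hölder together with the bounded overlap of the input plates; the extra $\varepsilon$-loss is harmless), one obtains $\|A_k f\|_{L^p}\lesssim_\varepsilon 2^{\varepsilon k}2^{-(1/6+1/(3p))k}\|f\|_{L^p}$, i.e.\ $\beta(p)=\tfrac16+\tfrac1{3p}>1/p$ for $4<p\le6$, completing the argument.

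The step I expect to be the main obstacle is the second ingredient: establishing rigorously that the identity $\det\left(\mx{\gamma_2'' & \gamma_3'' \\ \gamma_2''' & \gamma_3'''}\right)=\tfrac12(\gamma_2'')^2$ forces the $\pi_R$-blowdown, and \emph{quantitatively} that after the correct curve- and group-adapted rescaling the frequency supports of the pieces $A_{k,J}$ satisfy the tangency and $\sim 2^{-k/3}$-separation hypotheses needed to invoke \emph{cone} decoupling rather than, say, paraboloid decoupling (which would give a different and worse exponent). This is where the algebraic hypothesis of Theorem \ref{mainthm} is used essentially rather than merely qualitatively; a secondary point needing care is making the passage between the $x_3$-fibered $L^2$ picture and the $L^p(\rr^3)$ decoupling picture, together with the $p=4$ endpoint bookkeeping, fully precise.
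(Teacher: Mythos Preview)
Your strategy has a genuine and fatal gap: the single–scale bound you aim for,
\[
\|A_k\|_{L^p\to L^p}\lesssim_\varepsilon 2^{(\varepsilon-\beta(p))k},\qquad \beta(p)=\tfrac16+\tfrac1{3p},
\]
is \emph{impossible} for this operator.  Since $\beta(p)>1/p$ for $p>4$, it would imply $A:L^p_{comp}\to L^p_s$ for some $s>1/p$, but the sharpness example in Proposition~\ref{sharpnessprop} (Section~\ref{sharpness}, \S3.1) shows $s\le 1/p$; equivalently, testing on indicator functions of $2^{-k}$–balls already gives $\|A_k\|_{L^p\to L^p}\gtrsim 2^{-k/p}$.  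So there is no geometric series in $k$ to sum, and the ``no square function needed'' shortcut cannot work in principle.

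The error originates in the $L^2$ step.  Your van der Corput computation for $T_\lambda$ is correct \emph{pointwise in $t$}, but it does not yield an $L^2(\rr^2)\to L^2(\rr^2)$ operator bound: $T_\lambda$ is an oscillatory integral operator with $g$ inside the $t$–integral, not a scalar oscillatory integral.  In fact, because $\pi_R$ is a blowdown here (not a fold), the sharp $L^2$ regularity is $L^2_{comp}\to L^2_{1/4}$ (see \cite{GrSe94} and the discussion preceding Corollary~\ref{corollary}), i.e.\ $\|A_k\|_{L^2\to L^2}\sim 2^{-k/4}$, not $2^{-k/3}$.  Correspondingly, the natural decoupling scale for the worst piece is $2^{-k/2}$ (plates of dimensions $\sim 2^k\times 2^{k/2}\times 1$), not $2^{-k/3}$; your $2^{-k/3}$ scale and $2^{-k/3}$ $L^2$ bound are the numbers for the two–sided fold situation of \cite{PrSe19}, which is precisely what the hypothesis of Theorem~\ref{mainthm} excludes.

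What the paper does instead is a two–parameter decomposition $\A=\sum_{k}\sum_{\ell\le k/2}\A_{k,\ell}$, where $\ell$ measures dyadic distance to the singular variety $\mathcal L$.  For each piece one proves only
\[
\|\A_{k,\ell}\|_{L^p\to L^p}\lesssim 2^{-(k+\ell\varepsilon_0)/p},\qquad p>4,
\]
(Proposition~\ref{decomposedfinal}), which is exactly the critical decay $2^{-k/p}$ with a small extra gain in $\ell$.  This comes from a Cotlar--Stein $L^2$ bound $\|\A_{k,\ell}\|_{L^2\to L^2}\lesssim 2^{(\ell-k)/2}$ (Proposition~\ref{L2boundA}), an $L^\infty$ bound, and an iterated decoupling inequality at scale $2^{-\ell}$ (Proposition~\ref{decoupling}).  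Since $2^{-k/p}$ does not sum in $k$, the passage to $L^p_{1/p}$ requires the Calder\'on--Zygmund type result of \cite{PrRoSe11} (Theorem~\ref{PRS}) together with Lemma~\ref{LittlewoodPaley}; only afterwards is the sum in $\ell$ closed using the $2^{-\ell\varepsilon_0/p}$ gain.  In short, the endpoint Sobolev exponent $1/p$ is genuinely an endpoint here, and the summation in $k$ cannot be avoided by proving a stronger fixed–$k$ bound.
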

Thus the case where \eqref{piRfold} does not hold in an interval gives the same $L^p$ regularity as the case where \eqref{piRfold} always holds. This seems to suggest that the condition \eqref{piLfold} should be sufficient to prove $L^p_{comp}(\rr^3)\to L^p_{1/p}(\rr^3)$ boundedness for $A$. However, it is unknown whether these estimates hold in intermediate cases, where \eqref{piRfold} does not hold for an isolated point $t_0$. 

The averaging operators studied in \cite{PrSe07} are bounded on $L^p_{1/p}(\rr^3)$ for all $f\in L^p(\rr^3)$, not just those $f$ with compact support. This is because the Euclidean averaging operators are of convolution type, hence they commute with translation on $\rr^3$. If $f$ is compactly supported on $B_1(0)$ then $A f$ lies in a fixed dilate of the support of $f$; thus if $A $ were to commute with Euclidean translations we could drop the assumption that $f$ is compactly supported by splitting $f$ into compactly supported pieces, using almost disjoint support and the fact that Fourier multipliers (in particular Bessel potentials) also commute with (Euclidean) translation. Since $A$ instead commutes with Heisenberg translations, we cannot use this argument to prove boundedness on $L^p_{1/p}(\rr^3)$ for non-compact $f\in L^p(\rr^3)$. However, we can prove that $A$ is bounded from $L^p(\rr^3)$ to an analogue of the space $L^p_{1/p}(\rr^3)$ adapted to translations on the Heisenberg group which we now introduce.

Define $\Lambda:=\{(x_1,x_2,x_3+1/2x_1x_2) \, : \, x_j\in\zz\}\subset\mathbb{H}^1$. Let $\mathcal{R}_{\lambda}$ denote right (Heisenberg) translation by $\lambda\in \Lambda$. It is easy to see that $\Lambda$ is a uniform lattice on $\mathbb{H}^1$ whose action on $\mathbb{H}^1$ is thus free and properly discontinuous (see \cite{Mo15}, Chapter 4). Thus we can pick $\psi\in C_c^\infty(B_2(0))$ such that $0\le \psi\le 1$ and $\sum_{\lambda\in \Lambda}\psi_{\lambda}\equiv 1$, where $\psi_\lambda(x)=\psi(x\odot\lambda^{-1})=\mathcal{R}_\lambda \psi$, with finitely overlapping support. Given this partition of unity, we define the following norm.
\begin{defn}\label{heisensobonorm}
    Let $D^s$ be the Bessel potential associated to the Sobolev norm, given by
    \[
        \widehat{D^s f}(\xi):=(1+|\xi|^2)^{s/2}\hat{f}(\xi).
    \]
    Then define the space $L^p_s(\mathbb{H}^1)$ to be functions in $L^p(\rr^3)$ such that the norm
    \[
        \|f\|_{L^{p}_{s}(\mathbb{H}^1)}:=\Big\|\sum_{\lambda\in \Lambda} \mathcal{R}_\lambda D^s \psi_0 \mathcal{R}_{\lambda^{-1}} f \Big\|_{L^p(\rr^3)}
    \]
    is finite. 
\end{defn}
Theorem \ref{mainthm} then implies the following.
\begin{thm}\label{heisensobocor}
    If $\gamma$ satisfies the same conditions as in Theorem \ref{mainthm}, then $A$ is bounded from $L^p(\rr^3)$ to $L^{p}_{1/p}(\mathbb{H}^1)$ for $p>4$.
\end{thm}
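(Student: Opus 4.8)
The key structural point is that $A$ is a left convolution operator on $\mathbb{H}^1$, $Af=\mu*f$, and therefore commutes with right translations: a one-line computation shows $A\,\mathcal{R}_w=\mathcal{R}_w\,A$ for every $w\in\mathbb{H}^1$. Applying this with $w=\lambda^{-1}$, $\lambda\in\Lambda$, the function whose $L^p$ norm defines $\|Af\|_{L^p_{1/p}(\mathbb{H}^1)}$ is
\[
\sum_{\lambda\in\Lambda}\mathcal{R}_\lambda D^{1/p}\psi_0\,\mathcal{R}_{\lambda^{-1}}Af
=\sum_{\lambda\in\Lambda}\mathcal{R}_\lambda D^{1/p}\psi_0\,A\,\mathcal{R}_{\lambda^{-1}}f .
\]
The plan is to estimate this sum summand by summand using Theorem \ref{mainthm}, after localizing the $\lambda$-th summand near the lattice point $\lambda$, and then to reassemble the pieces using the finite overlap of suitable translated balls.

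For the localization, note that since $\mu$ is supported on the bounded curve $\gamma([0,1])$ and $\psi_0$ is supported in $B_2(0)$, the restriction of $Ag$ to $\supp\psi_0$ depends only on the restriction of $g$ to the fixed bounded set $S:=\bigcup_{t\in[0,1]}\gamma(t)^{-1}\odot B_2(0)$. Fix $\chi_0\in C_c^\infty(\mathbb{H}^1)$ with $\chi_0\equiv 1$ on $S$, and set $f_\lambda:=(\mathcal{R}_\lambda\chi_0)\,f$, so that $\mathcal{R}_{\lambda^{-1}}f_\lambda=\chi_0\cdot\mathcal{R}_{\lambda^{-1}}f$ is supported in the fixed compact set $\supp\chi_0$. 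Then $\psi_0 A\,\mathcal{R}_{\lambda^{-1}}f=\psi_0 A(\mathcal{R}_{\lambda^{-1}}f_\lambda)$. Because right Heisenberg translations preserve Lebesgue measure on $\rr^3$, $\sum_\lambda\|f_\lambda\|_{L^p}^p=\int|f|^p\sum_\lambda|\mathcal{R}_\lambda\chi_0|^p\lesssim\|f\|_{L^p}^p$, the last step using that $\Lambda$ is a uniform lattice and $\chi_0$ has compact support, so the translates $\mathcal{R}_\lambda\chi_0$ have uniformly bounded overlap.

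Write $H_\lambda:=D^{1/p}\psi_0 A(\mathcal{R}_{\lambda^{-1}}f_\lambda)$, so the $\lambda$-th summand is $\mathcal{R}_\lambda H_\lambda$. Applying Theorem \ref{mainthm} to the compactly supported function $\mathcal{R}_{\lambda^{-1}}f_\lambda$ (with a constant uniform in $\lambda$ since all these are supported in the fixed set $\supp\chi_0$), together with the boundedness of multiplication by $\psi_0$ on $L^p_{1/p}(\rr^3)$, gives $\|H_\lambda\|_{L^p(\rr^3)}=\|\psi_0 A(\mathcal{R}_{\lambda^{-1}}f_\lambda)\|_{L^p_{1/p}(\rr^3)}\lesssim\|f_\lambda\|_{L^p}$. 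Moreover, $\psi_0 A(\mathcal{R}_{\lambda^{-1}}f_\lambda)$ is supported in $B_2(0)$, and writing $D^{1/p}$ as $(I-\Delta)$ composed with convolution against the Bessel kernel $G_{2-1/p}$ (locally integrable, with all derivatives decaying faster than any polynomial away from the origin) yields the pointwise bound $|H_\lambda(x)|\lesssim_N(1+|x|)^{-N}\|f_\lambda\|_{L^p}$ for $|x|\ge 4$ and every $N$. Now decompose $H_\lambda=\sum_{k\ge 2}H_{\lambda,k}$, where $H_{\lambda,k}$ is the restriction of $H_\lambda$ to the annulus $\{2^k\le|x|<2^{k+1}\}$ for $k\ge 3$ and $H_{\lambda,2}$ its restriction to $B_8(0)$; then $\|H_{\lambda,k}\|_{L^p}\lesssim_N 2^{-Nk}\|f_\lambda\|_{L^p}$ for all $k,N$ (with the $k=2$ bound just $\lesssim\|f_\lambda\|_{L^p}$). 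The piece $\mathcal{R}_\lambda H_{\lambda,k}$ is supported in $B_{2^{k+1}}(0)\odot\lambda$, and for fixed $k$ these sets have overlap $\lesssim 2^{4k}$ over $\lambda\in\Lambda$: if $x$ lies in both $B_R(0)\odot\lambda$ and $B_R(0)\odot\lambda'$ then $\lambda\odot\lambda'^{-1}$ lies in a fixed Heisenberg ball of radius $\lesssim R$, so for fixed $\lambda'$ the number of admissible $\lambda$ is at most the number of points of $\Lambda$ in that ball, which is $\lesssim R^4$ since $\Lambda$ is a uniform lattice and a Heisenberg $R$-ball has Haar measure $\sim R^4$. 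Using that $\mathcal{R}_\lambda$ is an $L^p$ isometry and the elementary bound $\|\sum_i h_i\|_{L^p}\le M^{1-1/p}(\sum_i\|h_i\|_{L^p}^p)^{1/p}$ for families of overlap $\le M$,
\[
\Big\|\sum_{\lambda}\mathcal{R}_\lambda H_{\lambda,k}\Big\|_{L^p}
\lesssim 2^{4k(1-1/p)}\Big(\sum_\lambda\|H_{\lambda,k}\|_{L^p}^p\Big)^{1/p}
\lesssim_N 2^{(4-N)k}\Big(\sum_\lambda\|f_\lambda\|_{L^p}^p\Big)^{1/p}
\lesssim_N 2^{(4-N)k}\|f\|_{L^p}.
\]
Choosing $N>5$ and summing the geometric series in $k$ gives $\|Af\|_{L^p_{1/p}(\mathbb{H}^1)}=\|\sum_\lambda\mathcal{R}_\lambda H_\lambda\|_{L^p}\lesssim\|f\|_{L^p}$.

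The only substantive difficulty is the reassembly step. The Bessel potential $D^{1/p}$ does not preserve compact support, and right Heisenberg translations distort Euclidean balls, so one cannot simply invoke disjointness of supports; both obstructions are absorbed by the scale-by-scale decomposition of $H_\lambda$ together with the polynomial overlap bound $\lesssim R^4$ for right-translated balls, which uses only that $\Lambda$ is a uniform lattice in the (unimodular) group $\mathbb{H}^1$. Everything else reduces to Theorem \ref{mainthm}, the commutation relation $A\,\mathcal{R}_w=\mathcal{R}_w\,A$, and standard mapping properties of Bessel potentials and of multiplication by smooth compactly supported functions on $L^p$ Sobolev spaces.
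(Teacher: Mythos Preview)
Your proof is correct and follows the same overall strategy as the paper: commute $A$ with right Heisenberg translations, localize so that Theorem~\ref{mainthm} applies to each summand, and then reassemble. The paper's proof is considerably terser; it simply asserts
\[
\|Af\|_{L^p_{1/p}(\mathbb{H}^1)}\lesssim\Big(\sum_{\lambda\in\Lambda}\|\mathcal{R}_\lambda D^{1/p}\psi_0 A\mathcal{R}_{\lambda^{-1}}f\|_p^p\Big)^{1/p}
\]
by appealing to ``almost disjoint support of $\psi_\lambda$'' and then applies Theorem~\ref{mainthm} termwise. Your version supplies precisely the content that the paper's one-line justification glosses over: since $D^{1/p}$ is nonlocal, the summands $\mathcal{R}_\lambda D^{1/p}\psi_0 A\mathcal{R}_{\lambda^{-1}}f$ do not have bounded overlap in any naive sense, and your dyadic decomposition of $H_\lambda$ together with the Bessel-kernel tail bound and the polynomial overlap estimate for right-translated balls makes this step rigorous. (In fact your overlap bound $\lesssim R^4$ is slightly generous---a direct count over the lattice $\Lambda$ gives $\lesssim R^3$---but this is immaterial since you take $N$ large.) So the two proofs are the same in architecture, with yours carrying out the reassembly honestly where the paper waves its hands.
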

It is useful to note that for functions supported in a compact set $K$, 
\[
\big\|f\big\|_{L^p_{s}(\rr^3)}\simeq_K \big\|f\big\|_{L^p_{s}(\mathbb{H}^1)}.
\]

%%%%%%%%%%%%%%%%%%%%
\subsection{Background}\label{background}
%%%%%%%%%%%%%%%%%%%%

The operator $A$ is an example of a Fourier Integral Operator (FIO), more specifically a generalized Radon transform over a family of curves in $\rr^3$ parametrized by $x\in\rr^3$, given by $M_x=\{(-\gamma(t))\odot x \, : \, t\in I\}$. These $M_x$ are sections of a manifold $M\subset \rr^3_x\times \rr^3_y$ defined by 
\begin{equation}\label{ManifoldM}
    M=\{(x,y) \, : \, \Phi(x,y)=0\},
\end{equation}
where $\Phi=(\Phi^2,\Phi^3)$ is defined by
\begin{align}
    \label{Phi}\Phi^2(x,y)&=x_2-y_2-\gamma_2(x_1-y_1) \\ \notag \Phi^3(x,y)&=x_3-y_3-\gamma_3(x_1-y_1)+\tfrac12(x_1\gamma_2(x_1-y_1)- x_2(x_1-y_1)).
\end{align}
The $L^2$ regularity of a generalized Radon transform is related to the geometry of the (twisted) conormal bundle $(N^*M)'$ of the incidence manifold $M$, specifically the geometry of the projections $\pi_L:(N^*M)'\to T^*\rr^3_x$ and $\pi_R:(N^*M)'\to T^*\rr^3_y$. We will introduce these objects in more detail in Section \ref{conormalbundle}.
%\begin{center}
%\begin{tikzcd}
%& \arrow[dl, "\pi_L" '] \mathcal{C} \arrow[dr, "\pi_R"] & \\
%T^*X\setminus0 & & T^*Y\setminus0
%\end{tikzcd}
%\end{center}
The twisted conormal bundle $(N^*M)'$ is also called the canonical relation associated to $A$ in the more general theory for FIOs, see \cite{GrSe02,Ho71}. The case where $\pi_L$ and $\pi_R$ are local diffeomorphisms is discussed by H\"ormander in \cite{Ho71}. However, if the ambient dimension is at least 3 then it is impossible for $\pi_L$ and $\pi_R$ to be nonsingular everywhere for generalized Radon transforms over curves \cite{GrSe02}. The next best case occurs when $\pi_L$ and $\pi_R$ have fold singularities (this situation is called a two-sided fold), for which numerous authors have proven $L^2$ regularity, $L^2\to L^p$, and $L^2_\alpha\to L^q_\beta$ estimates, starting with the work of Melrose and Taylor in \cite{MeTa85}. See \cite{GrSe02,Ph95}, and \cite{PhSt91} for a survey of results and methods. 

In \cite{PrSe07}, Pramanik and Seeger were able to use the decoupling inequalities of Wolff \cite{Wo00} (and later Bourgain-Demeter \cite{BoDe15}) to bootstrap $L^2$-Sobolev regularity results of a family of averaging operators over curves into $L^p$-Sobolev estimates. These averaging operators are generalized Radon transforms associated to a two-sided fold. Pramanik and Seeger, in \cite{PrSe19}, have continued their work proving these $L^p$-Sobolev bounds for a more general class of Fourier integral operators associated to two-sided folds, incidentally providing an answer to the $L^p$ regularity of $A$ when both \eqref{piLfold} and \eqref{piRfold} hold, as mentioned above. 

A natural question to ask is whether the two-sided fold assumption in \cite{PrSe19} can be weakened. By symmetry, this is equivalent to asking whether one can drop the fold assumption on $\pi_R$, while keeping the fold assumption on $\pi_L$. Optimal regularity estimates on $L^2$ have been found for FIOs with a finite type condition on $\pi_R$ (see \cite{Co99}), and $L^2_\alpha\to L^q_\beta$ estimates have been found dropping any assumption on $\pi_R$ (see \cite{GrSe94}), but the question of $L^p$ regularity remains largely unanswered. A worst case scenario occurs when $\pi_R$ is maximally degenerate, a blowdown; examples of such operators appear in \cite{GrUh89} and \cite{GrUh90B}. 

In \cite{PrSe06}, Pramanik and Seeger were able to prove the same $L^p$ regularity estimates as the two-sided fold case (for $p>4$) hold for adjoints of a particular class of restricted $X$-ray transforms, which are examples of Radon transforms associated to a fold and a blowdown. Theorem \ref{mainthm} of this paper provides another example with a positive answer to this question, as $A$ is also an example of a generalized Radon transform associated to a fold and a blowdown; we will see why in Section 2. Surprisingly, as will be shown in Subsection \ref{modelcase}, these two operators are very closely related for certain choices of $\gamma(t)$. 

In addition, we believe that the techniques of this paper, using analogues of the family of changes of variables in \cite{PrSe19}, can generalize the $L^p$-Sobolev regularity estimates of Theorem \ref{mainthm} to a class of averaging operators over curves associated to a fold blowdown singularity, analogous to the class analyzed in \cite{PrSe19}. We are currently working on this generalization and hope to publish it in the future.

Work on $L^2$-Sobolev bounds for FIOs with one-sided fold singularities show that $L^2_{comp}(\rr^3)\to L^2_{1/4,loc}(\rr^3)$ bounds are the best possible if $\pi_R$ is a blowdown, see \cite{GrSe94}. We can thus interpolate the results of Theorem \ref{mainthm} with this $L^2$-Sobolev estimate to obtain $L^p$-Sobolev estimates for $2\le p\le 4$. To obtain estimates for $p<2$, we use analytic interpolation with the above estimate on $L^2$ and a certain Hardy space estimate. We combine the estimates for all $p$ together in the following corollary.

\begin{cor}\label{corollary}
    If $\gamma$ satisfies the conditions of Theorem \ref{mainthm} then $A$ maps boundedly from $L^p_{comp}(\rr^3)$ into $L^p_s(\rr^3)$, where $s$ lies inside the trapezoidal region illustrated below.
\end{cor}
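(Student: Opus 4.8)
The plan is to obtain Corollary \ref{corollary} purely by interpolation, taking Theorem \ref{mainthm} as a black box and combining it with two endpoint-type estimates already present in the literature. First I would record the three ``vertices'' that generate the trapezoidal region. The first is the content of Theorem \ref{mainthm}: $A:L^p_{comp}(\rr^3)\to L^p_{1/p}(\rr^3)$ is bounded for every $p>4$, which supplies the segment of the upper boundary with slope $-1$ (in the $(1/p,s)$ plane) for $1/p<1/4$. The second is the $L^2$ estimate: since $\pi_R$ is a blowdown, the results on one-sided folds (cf.\ \cite{GrSe94}) give $A:L^2_{comp}(\rr^3)\to L^2_{1/4,loc}(\rr^3)$, i.e.\ the point $(1/2,1/4)$; this already sits on the line through the $L^p$ results, so real (Riesz--Thorin) interpolation between this $L^2$ bound and the $L^p$ bounds for $p>4$ fills in the closed segment from $(1/4,1/4)$ to $(1/2,1/4)$, covering all $2\le p\le 4$ at the regularity level $s=1/4$ (and of course any smaller $s$, by the trivial inclusion of Sobolev spaces on a fixed compact set, using the local equivalence $\|f\|_{L^p_s(\rr^3)}\simeq_K\|f\|_{L^p_s(\mathbb{H}^1)}$ noted after Theorem \ref{heisensobocor} only if one wishes, though here it is not needed).

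Next I would treat $p<2$. The standard device is to build an analytic family of operators $A_z$ by composing $A$ with the Bessel potential $D^{-z}$ (or, more precisely, with a compactly supported analytic family of multipliers $\zeta(\xi)(1+|\xi|^2)^{-z/2}$ so that the kernels stay nicely localized), so that $A_0 = A$ up to harmless terms, $\mathrm{Re}\,z = 1/4$ recovers the $L^2\to L^2$ estimate with no derivative loss, and the line $\mathrm{Re}\,z$ equal to some negative value $-\delta$ gives boundedness from the Hardy space $h^1_{comp}(\rr^3)$ into $L^1_{loc}(\rr^3)$ (or $L^{1,\infty}$); such an $H^1\to L^1$ estimate for FIOs/Radon transforms of this type, with the appropriate order, is classical and follows from the fact that $A$ is an FIO of order $-1$ associated to a canonical relation projecting with at worst fold/blowdown singularities, so its kernel satisfies the requisite size and cancellation bounds on atoms. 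Stein's analytic interpolation theorem between the $h^1\to L^1$ endpoint and the $L^2\to L^2_{1/4}$ endpoint then yields $A:L^p_{comp}(\rr^3)\to L^p_s(\rr^3)$ for $1<p<2$ along the corresponding segment, and a further interpolation with the $p>4$ line closes up the remaining portion of the trapezoid; its upper edge is the polygonal curve $s=1/p$ for $1/p\le 1/4$, $s=1/4$ for $1/4\le 1/p\le 1/2$, and the segment descending from $(1/2,1/4)$ toward the $h^1$ vertex for $1/2\le 1/p\le 1$, with the region being everything below this curve and above $s=0$ (intersected with $0\le 1/p\le 1$).

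I would then write the proof as: (1) state precisely the three input estimates with references (Theorem \ref{mainthm}, the $L^2$ blowdown bound, the Hardy space bound), (2) set up the analytic family $A_z$ and verify the two endpoint estimates on the strip, (3) invoke Riesz--Thorin for $2\le p$ and Stein interpolation for $p\le 2$, and (4) identify the resulting exponents with the vertices of the trapezoid and note that the interior follows by the Sobolev embedding $L^p_{s'}\hookrightarrow L^p_{s}$ for $s\le s'$ together with compact support. The main obstacle, and the only place requiring genuine care rather than bookkeeping, is the Hardy space endpoint: one must check that the relevant analytic continuation $A_z$ with $\mathrm{Re}\,z$ slightly negative is bounded $h^1_{comp}\to L^1_{loc}$, i.e.\ that the (localized) Schwartz kernel of $A_z$ obeys the Calder\'on--Zygmund-type estimates on atoms. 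This is where the fold/blowdown structure of the canonical relation enters: one needs the kernel bounds that come from the stationary-phase analysis of the associated oscillatory integral (the same analysis underlying the $L^2$ theory in Section \ref{conormalbundle} and the references \cite{GrSe94,Co99}), and I expect this verification — tracking the order of the FIO, the homogeneity of the multiplier family, and the cancellation against $h^1$ atoms — to be the technical heart of the argument, whereas the interpolation steps themselves are routine.
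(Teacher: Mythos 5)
Your strategy matches the paper's: Theorem~\ref{mainthm} for $p>4$, interpolation with the $L^2\to L^2_{1/4}$ bound from~\cite{GrSe94} for $2\le p\le 4$, and for $1<p<2$ Stein interpolation for an analytic family built from $A$ and complex Bessel potentials with $L^2$ and $\mathcal{H}^1$ endpoints. But you misplace the difficulty and skip a genuine step. With the paper's family $T_zf=A[\chi(I-\Delta)^{z/2}f]$, the $\mathcal{H}^1\to L^1$ bound at $\mathrm{Re}\,z=0$ --- which you call the technical heart --- is actually the easy part: $(I-\Delta)^{iy/2}$ is a Calder\'on--Zygmund operator, hence $\mathcal{H}^1\to L^1$ with at most polynomial growth in $y$, while $A$ is an average against a compactly supported finite measure and so trivially bounded on $L^1$; the composition is then bounded $\mathcal{H}^1\to L^1$ with no atom-by-atom cancellation analysis or stationary phase at all. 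Your placement of the Hardy endpoint at $\mathrm{Re}\,z=-\delta$ with $\delta>0$ also does not fit: for the interpolated $z$ at Lebesgue exponent $p$ to equal $\alpha(p)=\tfrac12(1-\tfrac1p)$ (the segment from $(1/2,1/4)$ to $(1,0)$), the endpoints must sit at $\mathrm{Re}\,z=0$ and $\mathrm{Re}\,z=1/4$, i.e.\ $\delta=0$.

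The step you omit is the actual content of Proposition~\ref{Hardyprop}: Stein interpolation yields $T_{\alpha(p)}:L^p\to L^p$, and one must still deduce $A:L^p_{comp}\to L^p_{\alpha(p)}$. Since $A$ commutes with Heisenberg but not Euclidean translations, it does not commute with $(I-\Delta)^{\alpha(p)/2}$, and one cannot simply pull the Bessel potential through. The paper closes this by a Littlewood--Paley argument using Lemma~\ref{LittlewoodPaley}: up to rapidly decaying error $P_kA\approx P_kAP_k$, the multiplier $P_k\big(2^{-2k}(I-\Delta)\big)^{\alpha(p)/2}$ is again an order-$k$ Littlewood--Paley piece, so $2^{k\alpha(p)}P_kAf\approx P_kT_{\alpha(p)}f$ and one sums the square function. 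If you instead post-compose, $A_z=D^zA$, the Sobolev conversion becomes automatic but the $\mathrm{Re}\,z=0$ endpoint then requires $A:\mathcal{H}^1\to\mathcal{H}^1$, which is not free --- the difficulty moves rather than disappears. Either way, this conversion is where the work lies, and your proposal does not address it.
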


\begin{center}
    \begin{tikzpicture}[scale=6]
        \draw [<->] (0,1/2) -- (0,0) -- (3/2,0);
        \draw [ultra thick] (0,0) -- (0.242,0.242);
        \draw [ultra thick, dashed] (0.26,0.25) -- (1/2,1/4);
        %\draw [ultra thick,dashed] (0,0) -- (1/2,1/4);
        \draw [ultra thick] (1/2,1/4) -- (1,0);
        \draw [thin] (1/4,.01) -- (1/4,-.01);
        \node [below] at (1/4,0) {$1/4$};
        \draw [thin] (1/2,-.01) -- (1/2,.01);
        \node [below] at (1/2,0) {$1/2$};
        \draw [thin] (1,-.01) -- (1,.01);
        \node [below] at (1,0) {$1$};
        \draw [thin] (-.01,1/4) -- (.01, 1/4);
        \node [left] at (0,1/4) {$1/4$};
        \draw (1/4,1/4) circle [radius=0.01];
        \draw [fill] (1/2,1/4) circle [radius=0.01];
        \node [below right] at (1.5,0) {$1/p$};
        \node [left] at (0,1/2) {$s$};
        \node [below left] at (0,0) {0};
    \end{tikzpicture}
\end{center}

Theorem \ref{mainthm} establishes the leftmost line segment above. We will prove the rightmost line segment in Section \ref{PRSSection}. While this corollary establishes $L^p$ regularity under the conditions of Theorem \ref{mainthm}, much less is known for more general $\gamma$. 

The recent results of \cite{PrSe19} apply to $A$ and $A^*$ for curves satisfying both \eqref{piLfold} and \eqref{piRfold}. Additionally, we can interpolate with the $L^2$ estimate found in \cite{Co99} to show that $A$ is bounded from $L_{comp}^p(\rr^3)\to L^p_{s}(\rr^3)$ for $s$ within the pentagonal region below.
        \begin{center}
         \begin{tikzpicture}[scale=6]
        \draw [<->] (0,1/2) -- (0,0) -- (3/2,0);
        \draw [ultra thick] (0,0) -- (0.242,0.242);
        \draw [ultra thick, dashed] (0.26,0.255) -- (1/2,1/3);
        \draw [ultra thick,dashed] (1/2,1/3) -- (3/4,0.255);
        \draw [ultra thick] (.762,0.242) -- (1,0);
        \draw [thin] (1/4,.01) -- (1/4,-.01);
        \node [below] at (1/4,0) {$1/4$};
        \draw [thin] (1/2,-.01) -- (1/2,.01);
        \node [below] at (1/2,0) {$1/2$};
        \draw [thin] (3/4,-0.01) -- (3/4,.01);
        \node [below] at (3/4,0) {$3/4$};
        \draw [thin] (1,-.01) -- (1,.01);
        \node [below] at (1,0) {$1$};
        \draw [thin] (-.01,1/4) -- (.01, 1/4);
        \node [left] at (0,1/4) {$1/4$};
        \draw [thin] (-.01,1/3) -- (.01,1/3);
        \node [left] at (0,1/3) {$1/3$};
        \draw (1/4,1/4) circle [radius=0.01];
        \draw [fill] (1/2,1/3) circle [radius=0.01];
        \draw (3/4,1/4) circle [radius=0.01];
        \node [below right] at (1.5,0) {$1/p$};
        \node [left] at (0,1/2) {$s$};
        \node [below left] at (0,0) {$0$};
    \end{tikzpicture}
\end{center}
As discussed in \cite{PrSe19}, the estimates for the two-sided fold case are sharp up to the boundaries of the pentagonal region. One can also see that for $p>4$ the regularity is the same as in Theorem \ref{mainthm}. We conjecture that Theorem \ref{mainthm} holds if \eqref{piLfold} holds for all $t\in I$, with no additional assumption. 

Between the assumptions of a fold-blowdown and a two-sided fold there is a large collection of finite type conditions, the study of which could potentially lead to a proof of the above conjecture. Thus it is beneficial to characterize finite type conditions for $A$ for the purposes of future study.

Suppose that $\gamma_2''\ne 0$; this implies that at least one of \eqref{piLfold} and \eqref{piRfold} must hold. Let 
\begin{align*}
    h^L_n(t)&=\det\Big(\mx{\gamma_2''(t) & \gamma_3''(t) \\ \gamma_2^{(n+2)}(t) & \gamma_3^{(n+2)}(t)}\Big) + \tfrac{n}2\gamma_2''(t)\gamma_2^{(n+1)}(t), \qquad n=1,2,... \\
    h^R_n(t)&=\det\Big(\mx{\gamma_2''(t) & \gamma_3''(t) \\ \gamma_2^{(n+2)}(t) & \gamma_3^{(n+2)}(t)}\Big) - \tfrac{n}2\gamma_2''(t)\gamma_2^{(n+1)}(t), \qquad n=1,2,... 
\end{align*}
Then $h^L_1\ne 0$ and $h^R_1\ne 0$ are equivalent to \eqref{piLfold} and \eqref{piRfold} respectively. Further, at a point $t_0$ where $h^L_1(t_0)\ne 0$ (and hence $\pi_L$ is a fold), the smallest $n$ for which $h^R_n(t_0)\ne 0$ is the type of $\pi_R$. Analogously, at a point $t_0$ where $h^R_1(t_0)\ne 0$, the smallest $n$ such that $h^L_n(t_0)\ne 0$ is the type of $\pi_L$. Although the $L^2$ regularity estimates of \cite{Co99} still apply in these cases, if $n>1$ there are currently no known non-trivial $L^p$ regularity estimates for $p\ne 2$.

%%%%%%%%%%%%%%%%%%%%
\subsection{A Model Case}\label{modelcase}
%%%%%%%%%%%%%%%%%%%%

Following Secco in \cite{sec99}, we consider as a model case the moment curve $\gamma(t)=(t,t^2,\alpha t^3)$. A brief computation show that $\gamma$ satisfies \eqref{piLfold} if and only if $\alpha\ne -1/6$ (in which case it satisfies \eqref{piLfold} for all $t$), and similarly satisfies \eqref{piRfold} if and only if $\alpha\ne 1/6$. Hence the conditions of Theorem \ref{mainthm} are satisfied if and only if $\alpha=1/6$. In this case we can make the following changes of variables. Let $\eta(y)=(y_2+y_1^2,y_3-\tfrac23 y_1^3-\tfrac12y_1y_2,y_1)$. This is a smooth function whose Jacobian always has determinant 1. We apply the operator $A$ to $f\circ \eta$ to obtain
\[
A(f\circ\eta)(x)=\int f(x_2+x_1^2-2x_1t,x_3-\tfrac23 x_1^3-\tfrac12 x_1x_2+2x_1^2t-x_1t,x_1-t) d\mu(t)
\]
Next, we change variables $(\tilde{x}_1,\tilde{x}_2,\tilde{x}_3)= (x_1,x_2-x_1^2,x_3-\tfrac12 x_1x_2+\tfrac13x_1^3)$ to get
\[
A(f\circ\eta)(\tilde{x})=\int f(\tilde{x}_2+2\tilde{x}_1(\tilde{x}_1-t),\tilde{x}_3-\tilde{x}_1(\tilde{x}_1-t)^2,\tilde{x}_1-t) d\mu(t).
\]
Finally, letting $y_3:=\tilde{x}_1-t$ we see that our operator has been transformed into the adjoint of a restricted $X$-ray transform of the type analyzed by Pramanik and Seeger in \cite{PrSe06}, associated to the curve $y_3\mapsto (-2y_3,y_3^2)$. Thus by applying Theorem 1.2 of \cite{PrSe06} to the adjoint of $A$ and changing variables back, we conclude that $A$ maps $L^p_{comp}(\rr^3)$ boundedly into $L^p_{1/p}(\rr^3)$ for $p>4$, exactly the statement of Theorem \ref{mainthm}. The observation that $A$ can be transformed into the adjoint of a restricted $X$-ray transform for this choice of $\gamma$ suggests that the sharpness examples found in \cite{PrSe06} could be used to prove the sharpness of Corollary \ref{corollary} for more general curves $\gamma$. Adapting these arguments allows us to show that for any curve $\gamma$ satisfying the conditions of Theorem \ref{mainthm} this region of boundedness is (possibly up to the horizontal dashed line) the best possible.

\begin{prop} \label{sharpnessprop}
    If $\gamma$ satisfies the conditions of Theorem \ref{mainthm} and $A:L^p_{comp}\to L^p_s$ then $s\le \min\{\tfrac{1}{p},\tfrac{1}{2}(1-\tfrac{1}{p})\}.$
\end{prop}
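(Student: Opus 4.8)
\emph{Strategy.} The plan is to prove the two inequalities $s\le 1/p$ and $s\le\tfrac12(1-\tfrac1p)$ separately, each by producing a family $f_\delta\in C_c^\infty(\rr^3)$ supported in a fixed ball with $\|Af_\delta\|_{L^p_s(\rr^3)}/\|f_\delta\|_{L^p(\rr^3)}\to\infty$ as $\delta\to0$ whenever $s$ exceeds the stated value; the Proposition then follows by taking the minimum. Since $L^p_{comp}\to L^p_s$ bounds are local, are unaffected by smooth cutoffs, and are invariant up to constants under composition with smooth measure‑preserving diffeomorphisms, I first localize near a point $t_0\in I$ and perform a change of variables as in Subsection~\ref{modelcase} — available for any $\gamma$ obeying the hypotheses of Theorem~\ref{mainthm}, since these are precisely the conditions under which $A$ has a fold–blowdown normal form near $t_0$ — so that $A$ may be replaced by the model operator
\[
   \M f(x)=\int f\bigl(x_2+2x_1y,\;x_3-x_1y^2,\;y\bigr)\,a(x_1,y)\,dy ,
\]
where $a$ is smooth, nonzero at a chosen point $(x_1^0,y_0)$, and supported for $y$ in a small interval bounded away from $0$. (Equivalently, one may transport the sharpness examples of \cite{PrSe06} through these changes of variables, as after them $A$ is a localization of the adjoint restricted $X$‑ray transform treated there; the two families below are versions of those examples adapted to the present geometry.)

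\emph{The bound $s\le 1/p$ (a Knapp example, using neither the fold nor the blowdown).} Fix $x^0$ with $x_1^0\neq0$, let $\Gamma=\{(x_2^0+2x_1^0y,\,x_3^0-x_1^0y^2,\,y):y\in\supp a\}$, a $C^\infty$ arc of length $\approx1$, and set $f_\delta=\1_{T_\delta}$ with $T_\delta$ the $\delta$‑tube about $\Gamma$; then $\|f_\delta\|_{L^p}\approx\delta^{2/p}$. A direct check shows that for $|x-x^0|\lesssim\delta$ the curve $y\mapsto(x_2+2x_1y,\,x_3-x_1y^2,\,y)$ lies inside $T_\delta$ for every $y\in\supp a$, so $\M f_\delta\gtrsim1$ on a ball $B_{c\delta}(x^0)$. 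Pairing $\M f_\delta$ against an $L^1$‑normalized nonnegative bump $\phi_\delta$ concentrated on that ball, and using that $D^{-s}$ gains a factor $\delta^{s}$ on scale‑$\delta$ functions so that $\|\phi_\delta\|_{L^{p'}_{-s}}\approx\delta^{\,s-3/p}$, gives
\[
   \|\M f_\delta\|_{L^p_s}\ \ge\ \frac{\langle\M f_\delta,\phi_\delta\rangle}{\|\phi_\delta\|_{L^{p'}_{-s}}}\ \gtrsim\ \delta^{\,3/p-s},
\]
and comparison with $\|f_\delta\|_{L^p}\approx\delta^{2/p}$ forces $3/p-s\ge 2/p$, i.e. $s\le 1/p$.

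\emph{The bound $s\le\tfrac12(1-\tfrac1p)$ (the blowdown example).} Put $\lambda=\delta^{-1}$ and take the wave packet
\[
   f_\delta(z)=e^{i\lambda(z_1z_3+z_2)}\,g\bigl(\lambda^{1/2}z_1,\,z_2,\,z_3\bigr),
\]
with $g$ a fixed bump whose $z_3$‑support meets $\supp a$; thus $\|f_\delta\|_{L^p}\approx\lambda^{-1/(2p)}$, and the partial Fourier transform of $f_\delta$ in $(z_1,z_2)$ is concentrated in the $\lambda^{1/2}$‑neighborhood of the ``fold cone'' $\{\xi_2=z_3\xi_3\}$ — exactly the region where $\M$ fails to gain a full half derivative, since there the $y$‑integral defining $\widehat{\M f}$ runs effectively over an interval of length $\lambda^{-1/2}$ rather than $\lambda^{-1}$. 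Inserting $f_\delta$ into $\M$, the phase collapses to $\lambda(x_1y^2+x_2y+x_3)$, which is stationary in $y$ at $y=-x_2/(2x_1)$, and stationary phase yields
\[
   |\M f_\delta(x)|\ \gtrsim\ (\lambda|x_1|)^{-1/2}\qquad\text{on }R_\delta ,
\]
where $R_\delta=\{x:\ |x_1|\in[\lambda^{-1},1],\ -x_2/(2x_1)\in\supp a,\ |x_3-x_2^2/(4x_1)|\lesssim1\}$ is a thickened piece of a cone‑like hypersurface — the dispersion of the packet forced by the blowdown of $\pi_R$. Hence $\|\M f_\delta\|_{L^p}\gtrsim\lambda^{-1/2}$ for $p\le4$, and since $\widehat{\M f_\delta}$ is supported in $\{|\xi|\approx\lambda\}$ one has $\|\M f_\delta\|_{L^p_s}\approx\lambda^{s}\|\M f_\delta\|_{L^p}\gtrsim\lambda^{\,s-1/2}$; comparing with $\|f_\delta\|_{L^p}\approx\lambda^{-1/(2p)}$ forces $s-\tfrac12\le-\tfrac1{2p}$, i.e. $s\le\tfrac12(1-\tfrac1p)$. (For $p>4$ this example only gives the weaker $s\le\tfrac3{2p}$, but there $s\le1/p$ from the first example is already stronger, so $\min\{1/p,\tfrac12(1-\tfrac1p)\}$ is recovered for all $p$.)

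\emph{Main obstacle.} The reduction in the first paragraph and the first example are routine. The work is in the second example: one must choose the wave packet so that its partial Fourier transform sits precisely on the fold cone, and then carry out the stationary‑phase analysis of $\M f_\delta$ carefully enough to see that the mass genuinely disperses over the full cone cap $R_\delta$ with amplitude $(\lambda|x_1|)^{-1/2}$ — this is where the blowdown enters quantitatively — and to read off $\|\M f_\delta\|_{L^p}$ sharply from that picture (equivalently, to control the associated oscillatory integral / cone‑cap $L^p$ estimate). One must also be careful with the duality arguments used for the lower bounds on the Sobolev norms, where the gain of $D^{\pm s}$ on scale‑$\delta$ objects is essential.
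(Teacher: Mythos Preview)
Your two test-function ideas are the right ones, and morally they match the paper's examples: a Knapp/tube example for $s\le 1/p$ and a ``fold-plate'' example exploiting the degenerate direction for $s\le\tfrac12(1-\tfrac1p)$. But there is a genuine gap in the very first step. You assert that the change of variables of Subsection~\ref{modelcase} is ``available for any $\gamma$ obeying the hypotheses of Theorem~\ref{mainthm}'', reducing $A$ to the explicit model $\M f(x)=\int f(x_2+2x_1y,\,x_3-x_1y^2,\,y)\,a(x_1,y)\,dy$. That reduction is carried out in the paper \emph{only} for the moment curve $\gamma(t)=(t,t^2,\tfrac16 t^3)$; the explicit diffeomorphisms $\eta$ and the $\tilde x$-change there use $\gamma_2(t)=t^2$ in an essential way. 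For general $\gamma$ the paper only reduces (Section~\ref{conormalbundle}) to the operator $\A$ with phase $\tau\cdot(x'-y'+S(x_1,y_1))$ and $S$ given by \eqref{Sdefn}, which still depends on $\gamma_2$. The bare statement that ``$A$ has a fold--blowdown normal form'' does not produce your specific $\M$; a normal form of that strength would itself require proof. So as written, your argument only establishes Proposition~\ref{sharpnessprop} for the moment curve.

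The paper avoids this issue entirely by working directly with $\A^*$ in the $\tilde\Phi$-coordinates and never invoking the model $\M$. For $s\le 1/p$ it tests $m_k(D')\A^*$ on the indicator of a $2^{-k}$-ball (your tube example, dualized). For $s\le\tfrac12(1-\tfrac1p)$ it tests $m_k(D')\A^*$ on the indicator of a $1\times2^{-k/2}\times 2^{-k}$ plate aligned with the vectors $T(b)=(1,-b)$, $N(b)=(b,1)$, using that $S_{x_1y_1}(x_1,y_1)\parallel(1,-y_1)$ is independent of $x_1$ (a manifestation of the blowdown in the $\tilde\Phi$-coordinates, cf.\ \eqref{Sxjy}); this replaces your stationary-phase computation with a two-line Taylor expansion. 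Your wave-packet argument can certainly be salvaged for general $\gamma$ by building $f_\delta$ in the $\tilde\Phi$-coordinates rather than in the model, but then you are essentially redoing the paper's plate example in oscillatory form. A secondary point: your lower bounds for $\|\cdot\|_{L^p_s}$ via duality and the heuristic ``$D^{-s}$ gains $\delta^s$ on scale-$\delta$ functions'' are correct in spirit but need care (low frequencies of $\phi_\delta$; genuine frequency localization of $\M f_\delta$); the paper sidesteps this by composing with an explicit multiplier $m_k$ vanishing for $|\xi'|\le c2^k$, which makes the Sobolev comparison a one-line inequality.
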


Proposition \ref{sharpnessprop} establishes the sharpness of the solid lines in the trapezoidal region described in Corollary \ref{corollary}, and thus proves the sharpness of Theorems \ref{mainthm} and \ref{heisensobocor} in the Sobolev exponent. The horizontal dashed line is sharp for examples in the class of averaging operators considered; in particular, using Proposition 1.1 from \cite{PrSe06} and the changes of variables above, we see that Corollary \ref{corollary} cannot be improved past the dashed line for the moment curve, and hence Theorems \ref{mainthm} and \ref{heisensobocor} cannot be improved in $p$ beyond the endpoint $p=4$. It is conjectured, but not yet proven that the horizontal dashed line in Corollary \ref{corollary} is sharp for all curves $\gamma$ satisfying the conditions of Theorem \ref{mainthm}. 

The layout of this paper is as follows. In Section \ref{conormalbundle} we analyze the behavior of the conormal bundle associated to $A$. In Section \ref{sharpness} we prove Proposition \ref{sharpnessprop} using the machinery from Section \ref{conormalbundle}. In Section \ref{InitialDecompositionSection} we begin the proof of Theorem \ref{mainthm} by relating it to an estimate on oscillatory integrals. This is the main estimate in the paper, and is proven in Sections \ref{L2section} and \ref{DecouplingSection} using respectively the Cotlar-Stein lemma and decoupling for the cone. In Section \ref{PRSSection} we finish the proof of Theorem \ref{mainthm} with a Calder\`on-Zygmund type estimate proven in \cite{PrRoSe11}, and also prove Theorem \ref{heisensobocor} and Corollary \ref{corollary}.

%%%%%%%%%%%%%%%%%%%%
\subsection{Notation}
%%%%%%%%%%%%%%%%%%%%

We denote $(x_1,x_2,x_3)=(x_1,x')$ and $(y_1,y_2,y_3)=(y_1,y')$. In this paper $C$ will denote a large constant, $C>1$, and $c$ will denote a small positive constant $0<c<1$. The values of both of these constants may change from line to line. Additionally, for non-negative quantities $X$ and $Y$ we will write $X\lesssim Y$ to denote the existence of a positive constant $C$ such that $X\le CY$. If this constant depends on a parameter such as $\varepsilon$ we write $X\lesssim_\varepsilon Y$. If $X\lesssim Y$ and $Y\lesssim X$ then we write $X\simeq Y$.

For ease of reading the dot $\cdot$ will be reserved for the inner product on $\rr^2$ and $\langle \ , \ \rangle$ for inner product on $\rr^3$. In cases where this choice affects readability we default to $\langle \ , \ \rangle$, but these instances should be clear from context. In this paper, $e_i$ for $i=1,...,n$ will represent the standard unit basis vectors in $\rr^n$.

Generally we will denote smooth bumps with variations of $\chi$, whereas cutoff functions supported on a set $E$ are denoted by $\1_E$.

\section*{Acknowledgements}

I would like to thank my advisor Andreas Seeger for introducing me to this problem and for many hours of fruitful discussion. I would also like to thank the anonymous referee, who provided several helpful suggestions.

%%%%%%%%%%%%%%%%%%%%%%%%%%%%%%%%%%%%%%%%%%%%
\section{The Conormal Bundle}\label{conormalbundle}
%%%%%%%%%%%%%%%%%%%%%%%%%%%%%%%%%%%%%%%%%%%%

Assume $f$ is compactly supported in $B_1(0)$ and let $\chi\in C_c^\infty(\rr)$ be equal to 1 on $[-\tfrac12,\tfrac12]$ and supported on $[-1,1]$. For the rest of this section, let the superscript $(\cdot)^\Phi$ denote the parametrization of a geometric object in the coordinate system induced by $\Phi$. We write $A$ as a generalized Radon transform associated to the manifold $M=M^\Phi$ defined in \eqref{ManifoldM}, i.e.
\begin{equation}\label{Agamma}
    A f(x)=\chi(x_1)\iiint e^{i\tau\cdot\Phi(x,y)} \chi(y_1) f(y)  \, d\tau_2 d\tau_3 dy.
\end{equation}  
The twisted conormal bundle of $M$ is given by 
\[
\mathcal{C}:=\{(x,\xi,y,-\eta) \, : \, (x,y)\in M, \, (\xi,\eta)\in \mathcal{N}^*_{(x,y)}M\}
\]
 In the coordinates induced by the defining function $\Phi$, $\mathcal{C}$ is given by
\[
    \mathcal{C}^\Phi:=\left\{\big(x,( \tau\cdot\Phi)_x,y,-(\tau\cdot\Phi)_y\big) \, : \, \Phi(x,y)=0\right\}\subset T^*\rr^3_x\times T^*\rr^3_y.
\]
Let $\pi_L:\mathcal{C}\to T^*\rr^3_x$ and $\pi_R:\mathcal{C}\to T^*\rr^3_y$ be projection maps. We define $\mathcal{L}\subset \mathcal{C}$ to be the conic submanifold 
\begin{equation}\label{Ldefn}
       \mathcal{L}:=\{P\in \mathcal{C} \ : \ \det(d\pi_L)_P=0\}.
\end{equation}

Since $\Phi$ parametrizes $M$ as a graph, i.e. $\Phi(x,y)=F(x,y_1)-y'$ for some smooth $F$ (see \eqref{Phi}), we can parametrize the manifold $\mathcal{C}^\Phi$ by $(x,y_1,\tau)$; by an abuse of notation let $P(x,y_1,\tau)\in\mathcal{C}^\Phi$ refer to the point $P\in \mathcal{C}^\Phi$ specified by the parameters $(x,y_1,\tau)$. Since $(\tau\cdot\Phi)_x$ and $(\tau\cdot\Phi)_y$ are functions of $(x,y_1,\tau)$, the differentials of the projections $\pi_L$ and $\pi_R$ can be expressed as the Jacobians of the functions $\pi_L^\Phi:(x,y_1,\tau)\mapsto (x,(\tau\cdot \Phi)_x)$ and $\pi_R^\Phi:(x,y_1,\tau)\mapsto (y_1,F(x,y_1),(\tau\cdot\Phi)_y)$, respectively. A calculation yields that 
\[
    \det(d\pi_L^\Phi)=-\det(d\pi_R^\Phi)=(\tau_2-\tfrac12\tau_3x_1)\gamma_2''(x_1-y_1)+\tau_3\gamma_3''(x_1-y_1).
\]
Then in the coordinates induced by $\Phi$,  
\begin{align}\label{S1defn1}
    \mathcal{L}^\Phi:=\{P(x,y_1,\tau)\in\mathcal{C}^\Phi \, &: \, \tau_2=\rho(\gamma_3''(x_1-y_1)-\tfrac{1}{2} x_1\gamma_2''(x_1-y_1)) \\
    & \qquad \qquad \qquad \tau_3=-\rho\gamma_2''(x_1-y_1), \, \rho\in\rr\}.\notag
\end{align}

%Full Calculation of Pi_l and Pi_R.
%Indeed,
%\[
%    \pi_L(\vec{x},y_1,\tau_2,\tau_3)=
%    \begin{pmatrix}
%        x_1 \\
%        x_2 \\
%        x_3 \\
%      -\langle\tau,\gamma(x_1-y_1)\rangle +\tfrac12\tau_3(\gamma_2(x_1-y_1)+x_1\gamma_2'(x_1-y_1)) \\
%        \tau_1-\tfrac12\tau_2(x_1-y_1) \\
%        \tau_2
%    \end{pmatrix}
%\]
%and
%\[
%    \pi_R(x_1,x_2,x_3,y_1,\tau_1,\tau_2)=
%    \begin{pmatrix}
%        y_1 \\
%        x_2-\gamma_2(x_1-y_1) \\
%        x_3-\gamma_3(x_1-y_1)+\tfrac12(x_1\gamma_2(x_1-y_1)-x_2(x_1-y_1)) \\
%        -\langle \tau, \gamma'(x_1-y_1)\rangle -\tfrac12\tau_3(x_1\gamma_2'(x_1-y_1)-x_2) \\
%        -\tau_1 \\
%        -\tau_2
%    \end{pmatrix}.
%\]
%Thus we can compute $d\pi_L$ and $d\pi_R$ using these local coordinates to get
%\[
%    d\pi_L=
%    \begin{bmatrix}
%        I_{3\times 3} & 0 \\
%        (\tau\cdot\Phi)_{xx} & A
%    \end{bmatrix},
%\]
%where 
%\[
%    A=\begin{bmatrix}
%        \langle \tau,\gamma''(x_1-y_1)-\tfrac12\tau_3(\gamma_2'(x_1-y_1)+x_1\gamma_2''(x_1-y_1)) 	& -\gamma_2'(x_1-y_1) 	& -\gamma_3'(x_1-y_1)+\tfrac12\gamma_2(x_1-y_1)+\tfrac12 x_1\gamma_2'(x_1-y_1) \\
%        \tfrac12\tau_3 								& 1 			& -\tfrac12(x_1-y_1) \\
%        0 										& 0 			& 1
%    \end{bmatrix}
%\]
Next, we recall the definition of a Whitney fold and blowdown, as described in the survey paper \cite{GrSe02}.

\begin{defn}\label{foldblowdown}
    Suppose $g:X\to Y$ is a $C^\infty$ map between $C^\infty$ manifolds of corank $\le 1$, and the set $\mathcal{L}=\{P\in X \ : \ \det (dg)_P=0\}$ is an immersed hypersurface. 
    We say $V$, a nonzero smooth vector field on $X$, is a {\bf\emph{kernel field}} of $g$ if $V|_P\in \Ker(dg)_P$ for all $P\in \mathcal{L}$.   
    
    We say $g$ is a {\bf\emph{Whitney fold}} if 
    for every kernel field $V$ of $g$ and every $P\in\mathcal{L}$ we have $V(\det dg)\ne 0$ at $P$. 
    
    We say $g$ is a {\bf\emph{blowdown}} if every kernel field $V$ of $g$ is everywhere tangential to $\mathcal{L}$. Note this implies that $V^k(\det dg)\big|_P=0$ for all $k\in\nn$ and all $P\in\mathcal{L}$. 
\end{defn}

It useful to note that two kernel fields for a map $g$ only differ by a smooth function \cite{GrSe02}, so it suffices to check these conditions for one explicit kernel field for a given map. Moreover, since the definitions of Whitney folds and blowdowns are geometric in nature, they are invariant under diffeomorphisms, which we will use to our advantage. A kernel field for $\pi_L^\Phi$ is given (in the coordinates induced by $\Phi$) by $V_L^\Phi=\partial_{y_1}-\tfrac12\tau_3\partial_{\tau_2},$ and we see that by \eqref{S1defn1} we have for any point $P(x,y_1,\tau)\in\mathcal{L}^\Phi$  
\[
    V_L^\Phi\det d\pi_L^\Phi\Big|_P=\rho\Big[-\det\left(\mx{\gamma_2''(x_1-y_1) & \gamma_3''(x_1-y_1) \\ \gamma_2'''(x_1-y_1) & \gamma_3'''(x_1-y_1)}\right)-\tfrac12(\gamma_2''(x_1-y_1))^2\Big].
\]
Similarly, 
%\[
%    d\pi_R=
%    \begin{bmatrix}
%        0 										                                                    & 0 				& 0 	& 1 									                                    & 0 			                      & 0 \\
%        -\gamma_2'(x_1-y_1) 				                	                                    & 1 				& 0 	& \gamma_2'(x_1-y_1) 					                                    & 0 			                      & 0 \\
%        -\gamma_3'(x_1-y_1)+\tfrac12\gamma_2(x_1-y_1)+\tfrac12 x_1\gamma_2'(x_1-y_1)                  & -\tfrac12(x_1-y_1) & 1 	& \gamma_3'(x_1-y_1)-\tfrac12x_1\gamma_2'(x_1-y_1)+\tfrac12 x_2 	            & 0 			                      & 0 \\
%        -\langle \tau,\gamma''(x_1-y_1)+\tfrac12\tau_3(\gamma_2'(x_1-y_1)+x_1\gamma_2''(x_1-y_1)) 	& -\tfrac12\tau_3 	& 0 	& \langle\tau,\gamma''(x_1-y_1)\rangle+\tfrac12\tau_3x_1\gamma_2''(x_1-y_1)	& -\gamma_2''(x_1-y_1) & -\gamma_3'(x_1-y_1)+\tfrac12x_1\gamma_2'(x_1-y_1)-\tfrac12 x_2 \\
%        0 										                                                    & 0 				& 0 	& 0 									                                    & 1 			                      & 0 \\
%        0 										                                                    & 0 				& 0 	& 0 									                                    & 0 			                      & 1
%    \end{bmatrix}.
%\]
a kernel field for $\pi_R^\Phi$ is given by 
\[
    V_R^\Phi=\bigg\langle\bigg(\mx{1 \\ \gamma_2'(x_1-y_1) \\ \gamma_3'(x_1-y_1)-\tfrac12\gamma_2(x_1-y_1)-\tfrac12(x_1+y_1)\gamma_2'(x_1-y_1)}\bigg), \nabla_x\bigg\rangle,
\]
and as above we have for $P(x,y_1,\tau)\in\mathcal{L}^\Phi$
\[
    V_R^\Phi\det d\pi_R^\Phi\Big|_P=\rho\left[-\det\left(\mx{\gamma_2''(x_1-y_1) & \gamma_3''(x_1-y_1) \\ \gamma_2'''(x_1-y_1) & \gamma_3'''(x_1-y_1)}\right)+\tfrac12\left(\gamma_2''(x_1-y_1)\right)^2\right].
\]
Thus \eqref{piLfold} and \eqref{piRfold} are precisely the conditions under which $\pi_L$ and $\pi_R$ are folds, respectively.

The assumptions of Theorem \ref{mainthm} restrict the class of admissible curves $\gamma$ quite significantly, as it implies we can rewrite
\begin{equation*}
    \det\left(\mx{\gamma_2'' & \gamma_3'' \\ \gamma_2''' & \gamma_3'''}\right)=\tfrac12(\gamma_2'')^2
\end{equation*}
as $(\gamma_3''/\gamma_2'')'=\tfrac12$. This implies the existence of constants $C_1,C_2,C_3\in \rr$ such that 
    %\gamma_3''(t)&=(\tfrac12t+C_1)\gamma_2''(t) \\
\[
    %\gamma_3'(t)&=(\tfrac12t+C_1)\gamma_2'(t)-\tfrac12\gamma_2(t)+C_2 \\
    \gamma_3(t)=(\tfrac12t+C_1)\gamma_2(t)-\Gamma(t)+C_2t+C_3,
\]
where 
\begin{equation}
    \Gamma(t)=\int_0^t \gamma_2(s) \, ds. \label{Gammadefn}
\end{equation}
Assuming that \eqref{piRfold} vanishes uniformly also implies that $\pi_R$ is a blowdown. Indeed, the calculation above, along with \eqref{S1defn1}, show that $\mathcal{L}^\Phi$ is given by the set
\begin{align}
    \{P(x,y_1,\tau)\in \mathcal{C}^\Phi \, : \, (\tau_2,\tau_3)=%\rho\big(\tfrac12 x_1\gamma_2''(x_1-y_1)-\gamma_3''(x_1-y_1),\gamma_2''(x_1-y_1)\big) \qquad &\rho\in\rr  \\
    \rho(\tfrac12 y_1-C_1, 1), \ \rho\in\rr\}. \label{S1defn}
\end{align} 
Clearly any vector field spanned by $\{\partial_{x_j}\}_j$ (including $V_R$) is tangent to $\mathcal{L}$. 

 For the rest of this paper we will work with a slightly modified version of $A$. First, as the Schwartz kernel of $A$ is 0 away from the set $\{\Phi=0\}$, we can substitute $x_2-y_2=\gamma_2(x_1-y_1)$ into $\Phi^3(x,y)$ to get
 \[
 \Phi^3(x,y)\Big|_{\{\Phi(x,y)=0\}}=x_3-y_3-\gamma_3(x_1-y_1)+\tfrac12(x_1+y_1)\gamma_2(x_1-y_1)-\tfrac12x_2x_1-\tfrac12 y_2y_1.
 \] 
 By an abuse of notation let us call the above $\Phi^3(x,y)$. Next, by making a smooth change of variables 
\begin{align*}
    g_1:(x_1,x_2,x_3)&\mapsto(x_1+C_1,x_2,x_3+C_2x_1+\tfrac12(x_1+C_1)x_2)+C_3 \\
    g_2:(y_1,y_2,y_3)&\mapsto (y_1+C_1,y_2,y_3+C_2y_1+\tfrac12(y_1+C_1)y_2)
\end{align*}
we can reduce the proof of Theorem 1 to analyzing regularity estimates of the operator
\begin{equation}\label{newoperatordefn}
    \A f(x)=\chi(x_1)\int e^{i\tau\cdot\tilde{\Phi}(x,y)} \chi(y_1)f(y) \, dy d\tau, 
\end{equation}
where 
\begin{equation}
    \tilde{\Phi}(x,y):=\Phi(g_1^{-1}(x),g_2^{-1}(y))=x'-y'+S(x_1,y_1)
\end{equation}
and
\begin{align}
    S^2(x_1,y_1)&=-\gamma_2(x_1-y_1) \label{Sdefn} \\
    S^3(x_1,y_1)&=y_1\gamma_2(x_1-y_1)+\Gamma(x_1-y_1), \notag
\end{align}
where $\Gamma(t)$ is given in \eqref{Gammadefn}. In the coordinates induced by $\tilde{\Phi}$ the fold and blowdown behavior manifests in the behavior of various derivatives of $\tau\cdot S(x_1,y_1)$. 
Indeed, calculating $\pi_L^{\tilde{\Phi}}$ and $\pi_R^{\tilde{\Phi}}$ at a point $P(x,y_1,\tau)\in\mathcal{C}^{\tilde{\Phi}}$ yields
\[
    \det(d\pi_L^{\tilde{\Phi}})=-\det(d\pi_R^{\tilde{\Phi}})=\tau\cdot S_{x_1y_1}(x_1,y_1)=\gamma_2''(x_1-y_1)(\tau_2-y_1\tau_3),
\]
meaning that 
\begin{equation}\label{Lparametrization}
    \mathcal{L}^{\tilde{\Phi}}=\{P(x,y_1,\tau)\in\mathcal{C}^{\tilde{\Phi}} \, : \,  \tau_2-y_1\tau_3=0 \}.
\end{equation}
Moreover, $V_L^{\tilde{\Phi}}=\partial_{y_1}$ and $V_R^{\tilde{\Phi}}=\partial_{x_1}-S_{x_1}(x_1,y_1)\cdot \nabla_{x'}$, so we have for $k_1,k_2\ge 1$ and any point $P(x,y_1,\tau)\in\mathcal{L}^{\tilde{\Phi}}$,
\begin{align*}
    \big(V_L^{\tilde{\Phi}}\big)^{k_1}\big(V_R^{\tilde{\Phi}}\big)^{k_2}\det(d\pi_{L}^{\tilde{\Phi}})\big|_P=\partial_{x_1}^{k_2+1}\partial_{y_1}^{k_1+1}\big[\tau\cdot S(x_1,y_1)\big].
\end{align*}
A similar expression holds for $d\pi_R^{\tilde{\Phi}}$. For these reasons, we will present various derivatives of $\tau\cdot S(x_1,y_1)$ here for future reference. For $j\ge 1$,
\begin{align}
    %\label{Sxj}\partial_{x_1}^j\big[\tau\cdot S(x_1,y_1)\big]&=-\gamma_2^{(j)}(x_1-y_1)(\tau_2-y_1\tau_3)+\tau_3\gamma_2^{(j-1)}(x_1-y_1) \\
    \label{Sxjy}\partial_{x_1}^j\partial_{y_1} \big[\tau\cdot S(x_1,y_1)\big]&=\gamma_2^{(j+1)}(x_1-y_1)(\tau_2-y_1\tau_3) \\
    \label{Sxjy2}\partial_{x_1}^j\partial_{y_1}^2\big[\tau\cdot S(x_1,y_1)\big]&=-\gamma_2^{(j+2)}(x_1-y_1)(\tau_2-y_1\tau_3)-\tau_3\gamma_2^{(j+1)}(x_1-y_1).
\end{align}
In particular we observe from these formulas that for $P(x,y_1,\tau)\in\mathcal{L}^{\tilde{\Phi}}$
\begin{align}
    \label{newL} \det(d\pi_L^{\tilde{\Phi}})\Big|_P&=\tau\cdot S_{x_1 y_1}(x_1,y_1)\Big|_{(\tau_2,\tau_3)=\rho(y_1,1)}=0 \\
    \label{newfold} V_L^{\tilde{\Phi}}\det(d\pi_L^{\tilde{\Phi}})\Big|_{P}&=\tau\cdot S_{x_1 y_1^2}(x_1,y_1)\Big|_{(\tau_2,\tau_3)=\rho(y_1,1)}=-\rho\gamma_2''(x_1-y_1) \\
    \label{newblowdown}(V_R^{\tilde{\Phi}})^j\det(d\pi_R^{\tilde{\Phi}})\Big|_{P}&=-\tau\cdot S_{x_1^{j+1}y_1}(x_1,y_1)\Big|_{(\tau_2,\tau_3)=\rho(y_1,1)}=0,  \qquad \forall j\in\nn,
\end{align}
which respectively encode the definition of $\mathcal{L}$, the fold condition on $\pi_L$, and an implication of the blowdown condition on $\pi_R$.

As is shown in Section 3 of \cite{PrSe19}, the condition that $\pi_L$ is a fold is enough to ensure a curvature condition on the fibers of $\mathcal{L}$, as formulated by Greenleaf and Seeger in \cite{GrSe94}. Let
\begin{equation}\label{Sigmaxdefn}
    \Sigma_x:=\{\xi\in\rr^3 \ : \ (x,\xi)\in \pi_L(\mathcal{L})\}.
\end{equation}
We see by the definition of $\mathcal{L}^{\tilde{\Phi}}$, in the coordinates induced by $\tilde{\Phi}$ the set \eqref{Sigmaxdefn} is given by
\begin{equation}\label{Sigmax}
    \Sigma_x^{\tilde{\Phi}}=\left\{\rho\left(\mx{\gamma_2(x_1-y_1) \\ y_1 \\ 1}\right) \, : \, \rho\in\rr, \ y_1\in \supp\,\chi \right\}
\end{equation}
Since $\gamma_2''(t)\ne 0$ we see $\Sigma_x^{\tilde{\Phi}}$ clearly has one non-vanishing principal curvature, hence Bourgain-Demeter-Wolff decoupling for the cone can be applied (see \cite{BoDe15}, \cite{Wo00}, and also \cite{PrSe07}). It is important to note here that the fibers $\Sigma_x^{\tilde{\Phi}}$ vary with $x_1$. This behavior contrasts with the situation in \cite{PrSe06} where the fibers of $d\pi_R$ were constant in $x$ and thus decoupling could be applied directly to the fixed cone $\Sigma$ for all $x$. Instead we use ideas from the decoupling estimate in Section 4 \cite{AnClPrSe2018}, localizing $x_1$ then applying decoupling iteratively at smaller and smaller scales, applying changes of variables at each step so the cone $\Sigma_{\tilde{\Phi}}$ varies less and less with $x$. We will explain this approach in Section \ref{DecouplingSection}.

%%%%%%%%%%%%%%%%%%%%%
\section{Sharpness}\label{sharpness}
%%%%%%%%%%%%%%%%%%%%%

We prove Proposition \ref{sharpnessprop}, which establishes the sharpness of Corollary \ref{corollary} and Theorems \ref{mainthm} and \ref{heisensobocor}. Consider a Fourier multiplier $m_k$ in $\rr^2$ of order $0$ which vanishes for $|\xi'|\le c 2^{k}$ (here $\xi'=(\xi_2,\xi_3)$ so we identify the multiplier $m_k$ as acting on functions in $\rr^3$ in the second and third coordinates). Observe that $\A$ commutes with $m_k(D')$. So if $\A: L^p\to L^p_s$ for some $p\in(1,\infty)$ it follows that
\[
    \|m_k(D') \A f\|_p\le C_p 2^{-ks} \|f\|_p.
\]
Observe as well that the same result holds for the adjoint of $\A$, given by
\[
\A^*g(y)=\chi(y_1)\int e^{-i\tau\cdot\tilde{\Phi}(x,y)} \chi(x_1) g(x) \, dx;
\]
if $\A:L^p\to L^p_s$ then $\A^*:L^{p'}_{-s}\to L^{p'}$, and $\|m_k(D') \A f\|_{p'}\le C_p 2^{-ks} \|f\|_{p'}$. As discussed in Section \ref{modelcase}, we borrow heavily from the sharpness results in \cite{PrSe06}.

%%%%%%%%
\subsection{$s\le 1/p$}
%%%%%%%%

Let $\zeta_1$ be supported in $\{\xi' \, : \, 1/2\le |\xi'|\le 2\}$ with $\widehat{\zeta}_1(0)=1$. Let $m_k$ be the Fourier multiplier given by $\zeta_1(2^{-k}\xi')$, acting on functions in $\rr^3$. Then
\begin{equation}
    m_k(D')\A^* f(y_1,y')=\chi(y_1)\int 2^{2k} \widehat{\zeta}_1(2^k(y'-x'-S(x_1,y_1))) f(x) \chi(x_1) \, dx. \label{sharpness1integral}
\end{equation}
Let $x_0\in [-1/2,1/2]$ such that $\chi(x_0)> 0$, and choose $k$ large enough that $\chi(x_1)>c>0$ for $|x_1-x_0|\le 2^{-k}$. Let $f_k$ be the indicator function of a ball of radius $2^{-k}$ centered at $(x_0,0,0)$, and let $c_{y_1}$ be the curve $\{S(x_1,y_1) \, : \, x_1\in \supp\, \chi\}\subset\rr^2$. For small $\varepsilon>0$ let $E_{y_1}$ be the set of all $y'$ such that $\dist(y',c_{y_1})\le\varepsilon 2^{-k}$. Since $\gamma_2''\ne 0$ on $[-,1,1]$ we can conclude that $S(\cdot,y_1)$ is a regular curve in $\rr^2$ on a neighborhood of $x_0$ that has diameter at least $1/2$, hence we estimate $|E_{y_1}|\approx 2^{-2k}$ for each fixed $y_1$. As $\widehat{\zeta}_1$ is positive near the origin we see that the integrand in \eqref{sharpness1integral} is bounded below by $c2^{2k}$ if $y'\in E_{y_1}$, whence we can bound the integral \eqref{sharpness1integral} below by $2^{-k}$. After integrating in $y'$ over the size of $E_{y_1}$ and in $y_1$ over a fixed compact set, we see that $\|m_k(D')\A^* f_k\|_{p'}\gtrsim 2^{-k}2^{-2k/{p'}}$. On the other hand, $\|f_k\|_{p'}\lesssim 2^{-3k/p}$, hence we must have $s\le 1-1/p'=1/p$.

%%%%%%%
\subsection{$s\le \tfrac12(1-\tfrac{1}{p})$}
%%%%%%%

Notice that the direction of the vector $S_{x_1 y_1}(x_1,y_1)=\gamma_2''(x_1-y_1)(1,-y_1)$ does not depend on $x_1$. Let $T(y_1)=(1,-y_1)$ and let $N(y_1)=(y_1,1)$. Let $\zeta_2\in\mathcal{S}(\rr)$ be such that $\widehat{\zeta}_2$ is non-negative everywhere and is positive in $[-1/2,1/2]$. Let $\zeta_3$ be supported in $\{1/2\le |t|\le 2\}$ with $\widehat{\zeta}_3\ge 1/2$ on $[-C,C]$. Pick $b$ such that $\chi(b)>0$ and define the Fourier multiplier $m_k$ by
\[
    m_k(\tau_2,\tau_3)=\zeta_2(2^{-k/2}\langle \tau,T(b)\rangle) \zeta_3(2^{-k}\langle \tau,N(b)\rangle).
\]
Again, $m_k$ acts on functions in $\rr^3$ as
\[
m_k(D')f(x)=\mathfrak{F}^{-1}\Big[m_k(\tau_2,\tau_3)\widehat{f}(\xi_1,\tau_2,\tau_3)\Big].
\] 
Since $m_k(\tau)$ vanishes for $|\tau|\le c 2^k$ we have $\|m_k(D')\A^* f\|_{p'}\le 2^{-ks} \|f\|_{p'}$, and that
\begin{align}
    m_k(D')\A^* f(y)=& \chi(y_1) \int 2^{3k/2} \widehat{\zeta}_2(2^{k/2}\langle y'-x'-S(x_1,y_1),T(b)\rangle ) \label{sharpness2integral}\\
    & \qquad \qquad \times \widehat{\zeta}_3(2^k \langle y'-x'-S(x_1,y_1),N(b)\rangle) f(x) \, dx. \notag
\end{align}
Let $g_k$ be the indicator of the set defined by the equations
$|\langle x'+S(x_1,b),T(b)\rangle|\le 2^{-k/2}$, $|\langle x'+S(x_1,b),N(b)\rangle|\le 2^{-k}$, and $x_1\in I$.
Let $P_k$ be the set of $y$ such that $|\langle y',T(b)\rangle|\le 2^{-k/2}$, $|\langle y',N(b)\rangle|\le 2^{-k}$, and $|y_1-b|\le 2^{-k/2}$.
For $x\in \supp\, g_k$ and $y\in P_k$ we see that since $|y_1-b|\le 2^{-k/2}$,
\[
    |\langle y'-x'-S(x_1,y_1),T(b)\rangle |\le C 2^{-k/2}.
\]
However, we have better decay in the $N(b)$ direction, as $S(x_1,\cdot)$ vanishes to second order in the $N(b)$ direction. Indeed, a Taylor expansion reveals
\[
    |\langle S(x_1,y_1)-S(x_1,b),N(b)\rangle|=|(y_1-b)^2(-\gamma_2'(x_1-b))+|y_1-b|^2R_1(x_1,y_1)|\le C 2^{-k},
\]
where $R_1(x_1,y_1)$ is smooth and uniformly bounded. Thus $|\langle y'-x'-S(x_1,y_1),N(b)\rangle|\le C2^{-k}$, implying by the conditions on $ \widehat{\zeta}_2$ and $\widehat{\zeta}_3$ that the integrand in \eqref{sharpness2integral} is greater than $c2^{3k/2}$, implying that $m_k(D')\A^* g_k(y)$ is bounded below by a positive constant for all $y\in P_k$. Thus 
$\|m_k(D')\A^*g_k\|_{p'}\ge c 2^{-2k/p'}.$
On the other hand, $\|g_k\|_{p'}\le 2^{-3k/2p'}$, implying that $s\le \tfrac{1}{2p'}=\tfrac12(1-\tfrac{1}{p})$.

%%%%%%%%%%%%%%%%%%%%%%%%%%%%%%%%%%%%%%%
\section{Initial Decomposition}\label{InitialDecompositionSection}
%%%%%%%%%%%%%%%%%%%%%%%%%%%%%%%%%%%%%%%

We localize in $|\tau|$ then localize away from the singular variety $\mathcal{L}$, following the ideas of Phong and Stein in \cite{PhSt91}. Let $\chi_1\in C_c^\infty(\rr)$ be equal to 1 on $[\tfrac{1}{2},2]$ and supported on $[\tfrac{1}{4},4]$ such that $\sum_{k\in\zz} \chi_1(2^k\cdot)\equiv 1$. For $k\ge 1$ define $\chi_k(|\tau|)=\chi_1(2^{1-k}|\tau|)$, and for $k=0$ define $\chi_0(|\tau|)=\sum_{k\le 0} \chi_1(2^{1-k}|\tau|)$. For $0\le \ell\le k/2$ let
\[
    a_{k,\ell,\pm}(y_1,\tau)=\left\{
    \begin{array}{cc}
         &  \chi_1(2^{\ell-k}(\pm (\tau_2-y_1\tau_3))) \qquad \ell< k/2 \\
         & 1-\sum_{k> 2\ell} \chi_1(2^{\ell-k}(\tau_2-y_1\tau_3)) \qquad \ell=\lfloor k/2 \rfloor
    \end{array}\right.
\]
and define
\begin{align}
     \A_{k,\ell,\pm}f(x)&=\chi(x_1)\int e^{i\tau\cdot\tilde{\Phi}(x,y)} \chi(y_1)f(y) \chi_k(|\tau|)a_{k,\ell,\pm}(y_1,\tau) \, dy\,d\tau. \label{Akl}
\end{align}
We will suppress the dependence on $\pm$. We prove the following estimate.
\begin{prop}\label{decomposedfinal}
    For $p>4$ there exists $\varepsilon_0(p)>0$ such that for all $\ell\le \lfloor k/2\rfloor$,
    \[
        \|\A_{k,\ell}\|_{L^p\to L^p}\le C_p 2^{-(k+\ell\varepsilon_0)/p}.
    \]
\end{prop}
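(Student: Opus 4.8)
The plan is to prove Proposition~\ref{decomposedfinal} by establishing two complementary estimates for $\A_{k,\ell}$ — an $L^2$ bound and an $L^p$ bound obtained via decoupling — and then interpolating between them (together with the trivial $L^\infty$ or $L^1$ bound) to reach the claimed decay for $p>4$. The key numerology to keep in mind: on the support of $a_{k,\ell,\pm}$ we have $|\tau|\approx 2^k$ and $|\tau_2-y_1\tau_3|\approx 2^{k-\ell}$, so we are at distance $\approx 2^{k-\ell}$ from the singular variety $\mathcal{L}^{\tilde\Phi}$ (by \eqref{Lparametrization}), which is the natural scale at which the fold geometry of $\pi_L$ and the blowdown geometry of $\pi_R$ become visible through the derivative formulas \eqref{Sxjy}, \eqref{Sxjy2}.

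\textbf{Step 1: the $L^2$ estimate.} First I would prove $\|\A_{k,\ell}\|_{L^2\to L^2}\lesssim 2^{-k/2}2^{-\ell/2}$, or more precisely a bound of the form $2^{-k+\ell/2}$ type scaling that, combined with the analysis below, yields the right exponent. The mechanism is a $TT^*$ / Cotlar--Stein almost-orthogonality argument (the excerpt explicitly flags Section~\ref{L2section} as using Cotlar--Stein): decompose in $y_1$ into intervals of length $\approx 2^{-\ell}$ (the scale on which $\tau_2-y_1\tau_3$ varies by a full unit at frequency $2^k$), estimate the kernel of $\A_{k,\ell}\A_{k,\ell}^*$ by stationary phase in the $\tau$ integration, using \eqref{Sxjy} and \eqref{Sxjy2} — the fold condition \eqref{newfold} guarantees the relevant second derivative $\tau\cdot S_{x_1y_1^2}$ is $\approx 2^k$ (nondegenerate) on $\mathcal{L}$, giving a gain of $2^{-k/2}$ per $y_1$-localization, while the blowdown condition \eqref{newblowdown} controls the off-diagonal decay between $y_1$-blocks. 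Summing the almost-orthogonal pieces gives the $L^2$ bound. This is essentially the Melrose--Taylor / Phong--Stein oscillatory-integral estimate adapted to the fold--blowdown case.

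\textbf{Step 2: the $L^p$ decoupling estimate.} Next I would use Bourgain--Demeter cone decoupling, as in \cite{PrSe07}, to upgrade. The point is that the wave-packet / frequency support of $\A_{k,\ell}f$ lives near the cone-like variety $\Sigma_x^{\tilde\Phi}$ of \eqref{Sigmax}, which has one nonvanishing principal curvature since $\gamma_2''\neq0$. Decompose the $\tau$-frequency annulus into $\approx 2^{\ell/2}$ angular sectors of aperture $\approx 2^{-\ell/2}$ adapted to this cone; on each sector $\A_{k,\ell}$ acts like a piece of a cone extension operator. Apply the $\ell^2$-decoupling inequality for the cone: this costs $2^{\ell\varepsilon}$ but converts the $L^p$ norm into an $\ell^2$-sum of the pieces, each of which satisfies a much better (essentially $L^2$-type) bound by Step~1 restricted to a single sector. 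The complication — emphasized in the last paragraph of Section~\ref{conormalbundle} — is that $\Sigma_x^{\tilde\Phi}$ depends on $x_1$, so one cannot decouple against a fixed cone; following the iterated-decoupling scheme of \cite{AnClPrSe2018}, I would localize $x_1$ to intervals, apply decoupling at the coarsest scale, then rescale so the cone varies less, and iterate. Reassembling the geometric series in the iteration and the Minkowski-type losses from $x_1$-localization against the gains from each decoupling step should produce a net gain $2^{-\ell\varepsilon_0/p}$ for some $\varepsilon_0(p)>0$ once $p>4$ (the threshold $p>4$ being exactly where the $L^2$ input and the decoupling exponent balance favorably, just as in \cite{PrSe07}).

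\textbf{Step 3: interpolation and assembly.} Finally I would combine: the $L^2$ bound from Step~1, the trivial $\|\A_{k,\ell}\|_{L^1\to L^1}+\|\A_{k,\ell}\|_{L^\infty\to L^\infty}\lesssim 1$ (or $\lesssim 2^{-\ell}$, tracking the measure of the $\tau$-support), and the decoupling-based $L^p$ gain, to interpolate to the stated $\|\A_{k,\ell}\|_{L^p\to L^p}\le C_p 2^{-(k+\ell\varepsilon_0)/p}$ for all $p>4$. The factor $2^{-k/p}$ is the "expected" smoothing of order $1/p$ (it is what gives the $L^p_{1/p}$ conclusion after summing $\sum_k\sum_{\ell\le k/2} 2^{k/p}\cdot 2^{-(k+\ell\varepsilon_0)/p}<\infty$ in the reduction of Theorem~\ref{mainthm}), and the extra $2^{-\ell\varepsilon_0/p}$ is precisely what makes the sum over $\ell$ converge.

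\textbf{Main obstacle.} I expect the hard part to be Step~2: making the iterated cone-decoupling argument work when the cone $\Sigma_x^{\tilde\Phi}$ genuinely moves with $x_1$, and carefully bookkeeping the losses from repeatedly localizing $x_1$ against the $\varepsilon$-losses and gains at each scale so that they sum to a genuine power saving rather than merely $2^{\ell\varepsilon}$. The blowdown degeneracy of $\pi_R$ (as opposed to the two-sided fold case of \cite{PrSe19}) is what forces this more delicate $x$-dependent decoupling, and controlling it uniformly is the technical heart of the matter; the $L^2$ estimate of Step~1, while not trivial, is a fairly standard fold--blowdown oscillatory integral computation once the correct $y_1$-decomposition is chosen.
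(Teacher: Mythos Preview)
Your overall strategy matches the paper's: Cotlar--Stein for $L^2$ (Proposition~\ref{L2boundA}), iterated variable-coefficient cone decoupling (Proposition~\ref{decoupling}), an elementary $L^\infty$ bound, and interpolation. However, your numerology has the key sign backwards. The $L^2$ bound is $\|\A_{k,\ell}\|_{L^2\to L^2}\lesssim 2^{(\ell-k)/2}$, not $2^{-(k+\ell)/2}$: it \emph{degrades} as $\ell$ grows because you are approaching the singular variety $\mathcal{L}$. With your first exponent decoupling would be unnecessary; the whole point is that $L^2$ alone is not summable in $\ell$, and decoupling combined with the $L^\infty$ gain is what overcomes this loss. (Your alternate guess ``$2^{-k+\ell/2}$'' has the right sign on $\ell$ but the wrong power of $2^{-k}$.)

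Concretely, the paper decomposes in $y_1$ into $\approx 2^{\ell}$ intervals $I_\nu$ of length $2^{-\ell}$ (not $2^{\ell/2}$ sectors of aperture $2^{-\ell/2}$): this is the scale on which $a_{k,\ell}(\cdot,\tau)$ is essentially constant. One proves the piecewise bounds $\big(\sum_\nu\|\A_{k,\ell}f_\nu\|_2^2\big)^{1/2}\lesssim 2^{(\ell-k)/2}\|f\|_2$ and $\sup_\nu\|\A_{k,\ell}f_\nu\|_\infty\lesssim 2^{-\ell}\sup_\nu\|f_\nu\|_\infty$, interpolates these to get $\big(\sum_\nu\|\A_{k,\ell}f_\nu\|_p^p\big)^{1/p}\lesssim 2^{\ell(3/p-1)}2^{-k/p}\|f\|_p$, and then applies decoupling at cost $2^{\ell(1/2-1/p+\varepsilon)}$ to reassemble. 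The combined $\ell$-exponent $\varepsilon+2/p-1/2$ is negative exactly for $4<p\le 6$ and $\varepsilon$ small; a final interpolation with $\|\A_{k,\ell}\|_{L^\infty\to L^\infty}\lesssim 1$ extends to all $p>6$.
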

This proposition follows by interpolation with $L^2$ estimates, $L^\infty$ estimates, and a decoupling inequality. Let $\{\nu\}$ be a set of $2^{-\ell}$-separated points in the unit interval, and let $I_\nu=[\nu,\nu+2^{-\ell}]$. Then for a function $f(y_1,y')$ supported in the unit cube, let $f_{\nu}(y):=f(y)\1_{I_\nu}(y_1)$, so that $f=\sum_{\nu} f_\nu$ with almost disjoint supports in $y_1$. The necessary $L^2$ estimate is the following.

\begin{prop}\label{L2boundA} 
    Let $\A_{k,\ell}$ be defined as above. 
    \begin{align}
        \|\A_{k,\ell}\|_{L^2\to L^2} &\lesssim  2^{(\ell-k)/2}, \qquad  \ell\le k/2. \label{L2est}
    \end{align}
    Moreover,
    \begin{align}
        \Big(\sum_\nu \|\A_{k,\ell} f_\nu\|_{L^2}^2\Big)^{1/2} &\lesssim  2^{(\ell-k)/2}\Big(\sum_\nu\|f_\nu\|_{L^2}^2\Big)^{1/2}, \qquad  \ell\le  k/2, \label{l2L2est}.
    \end{align}
\end{prop}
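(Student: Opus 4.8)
\textbf{Proof proposal for Proposition \ref{L2boundA}.}

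The plan is to prove the stronger square-function estimate \eqref{l2L2est} first, since \eqref{L2est} will follow from it by the almost-orthogonality of the pieces $f_\nu$ in $y_1$ (together with a Cauchy--Schwarz/finite-overlap argument on the output side, using that $\A_{k,\ell}f_\nu$ is essentially supported where $y_1\in I_\nu$ after accounting for the bounded speed of the flow $x\mapsto(-\gamma(t))\odot x$). So the heart of the matter is to bound a single piece $\A_{k,\ell}f_\nu$, uniformly in $\nu$, by $C2^{(\ell-k)/2}\|f_\nu\|_{L^2}$, and then sum the squares.

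For a fixed $\nu$, I would write $T_{k,\ell}^\nu := \A_{k,\ell}\circ(\text{restriction to }I_\nu)$ and estimate $\|T_{k,\ell}^\nu\|_{L^2\to L^2}^2 = \|T_{k,\ell}^\nu (T_{k,\ell}^\nu)^*\|_{L^2\to L^2}$ via Schur's test on the kernel of $T_{k,\ell}^\nu (T_{k,\ell}^\nu)^*$, or equivalently estimate $\|(T_{k,\ell}^\nu)^* T_{k,\ell}^\nu\|$. The operator $T_{k,\ell}^\nu$ has oscillatory kernel
\[
    K_{k,\ell}^\nu(x,y) = \chi(x_1)\chi(y_1)\1_{I_\nu}(y_1)\int e^{i\tau\cdot\tilde\Phi(x,y)}\,\chi_k(|\tau|)\,a_{k,\ell}(y_1,\tau)\,d\tau,
\]
and the composition kernel involves the phase $\tau\cdot\tilde\Phi(x,y) - \sigma\cdot\tilde\Phi(x,\tilde y)$ integrated in $\tau,\sigma$ (at frequency scale $2^k$) and in $x$. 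On the support of $a_{k,\ell}$ we have $|\tau_2 - y_1\tau_3|\sim 2^{k-\ell}$ (for $\ell<k/2$; the endpoint case $\ell=\lfloor k/2\rfloor$ is the region $|\tau_2-y_1\tau_3|\lesssim 2^{k-\ell}\sim 2^{k/2}$ and is handled the same way with $\lesssim$ in place of $\sim$). By \eqref{Lparametrization} this is exactly the distance from $P(x,y_1,\tau)$ to the singular variety $\mathcal{L}^{\tilde\Phi}$, so we are localizing at dyadic distance $2^{k-\ell}$ from the fold. Now, by \eqref{Sxjy} with $j=1$, $\partial_{x_1}\partial_{y_1}[\tau\cdot S] = \gamma_2''(x_1-y_1)(\tau_2-y_1\tau_3)$, which has size $\sim 2^{k-\ell}$ since $\gamma_2''\ne 0$; this is the mixed-Hessian lower bound that drives a Hörmander-type $L^2$ bound $2^{-(k-\ell)/2}$ for an oscillatory integral operator in the $(x_1,y_1)$ variables at frequency $2^k$. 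The $x'$ and $y'$ integrations are handled because $\tilde\Phi(x,y)=x'-y'+S(x_1,y_1)$ is affine-linear in $(x',y')$: integrating $e^{i\tau\cdot(x'-y')}$ against $\chi_k(|\tau|)$ in the $x',y'$ (equivalently $\tau_2,\tau_3$) variables produces, after the stationary/non-stationary analysis in $\tau$, the expected gain; more precisely one performs the $\tau$-integral first (it is a two-dimensional oscillatory integral with the linear phase $\tau\cdot(x'-y'+S(x_1,y_1))$ modulated by $\chi_k a_{k,\ell}$, whose localization width in $\tau$ is $2^{k-\ell}\times 2^k$), giving a kernel in $(x,y)$ that is $O(2^{2k-\ell})$ and concentrated in an anisotropic box of dimensions $\sim 2^{\ell-k}\times 2^{-k}$ in the $y'-x'$ variable and $\sim 1$ in $y_1$, and then the remaining $x$-integration in the $TT^*$ composition is a one-dimensional oscillatory integral in $x_1$ with the second-derivative lower bound coming from \eqref{Sxjy2} or directly from the mixed term above. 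Rescaling/counting the box volumes yields the factor $2^{\ell-k}$ for $\|TT^*\|$, hence $2^{(\ell-k)/2}$ for $\|T\|$.

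The main obstacle, and the step that needs the most care, is making the oscillatory-integral estimate genuinely uniform in $\nu$ and in the frequency localization, and correctly tracking the interaction of the two scales $2^k$ (total frequency) and $2^{k-\ell}$ (distance to $\mathcal{L}$). Concretely: after fixing $|\tau|\sim 2^k$ and $|\tau_2-y_1\tau_3|\sim 2^{k-\ell}$, one should rescale $\tau = 2^k\omega$ and treat the operator as an FIO at frequency $2^k$ with the nondegeneracy of the mixed Hessian in $(x_1,y_1)$ degraded by the factor $2^{-\ell}$ (since the relevant Hessian entry is $\sim 2^{-\ell}$ after normalizing $|\omega|\sim 1$); the clean way to see the final bound is Cotlar--Stein applied to a further dyadic decomposition in $y_1$ at scale $2^{-\ell}$ — which is exactly the decomposition $f=\sum_\nu f_\nu$ — where the diagonal blocks are bounded by $2^{(\ell-k)/2}$ by the single-scale Hörmander estimate and the off-diagonal blocks decay because the phases $\tau\cdot S(x_1,y_1)$ for $y_1$ in different $I_\nu$ separate in $\tau_2$ (distance $\gtrsim 2^{k}|\nu-\nu'|2^{-\ell}$, away from the common null direction), giving rapid decay in $|\nu-\nu'|$ after integration by parts in $\tau$. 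Assembling the Cotlar--Stein sum gives \eqref{L2est}, and retaining the square-function structure (rather than summing) gives \eqref{l2L2est}. I would also remark that the endpoint $\ell=\lfloor k/2\rfloor$ requires no separate argument beyond replacing $\sim$ by $\lesssim$, since the estimate is monotone in the width of the $\tau$-localization.
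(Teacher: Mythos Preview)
Your overall plan---exploit the affine linearity of $\tilde\Phi$ in $(x',y')$ to reduce to a one-dimensional oscillatory integral in $(x_1,y_1)$, then run an almost-orthogonality argument at the $y_1$-scale $2^{-\ell}$---is the same as the paper's, but the execution and organization differ in a couple of places worth flagging.

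First, the logical direction. The implication \eqref{L2est} $\Rightarrow$ \eqref{l2L2est} is trivial (apply the operator norm to each $f_\nu$ and use disjoint supports of the $f_\nu$), whereas \eqref{l2L2est} $\Rightarrow$ \eqref{L2est} needs output-side orthogonality. Your justification for the latter---that ``$\A_{k,\ell}f_\nu$ is essentially supported where $y_1\in I_\nu$''---is not right as stated, since the output is a function of $x$. The correct mechanism is on the Fourier side in $x'$: after Plancherel one sees that $a_{k,\ell}(y_1,\tau)\overline{a_{k,\ell}}(z_1,\tau)$ forces $|y_1-z_1|\lesssim 2^{-\ell}$ (because $|\tau_3|\gtrsim 2^k$ on the support), so $(H^\nu)^*H^{\nu'}$ vanishes for well-separated $\nu,\nu'$ by \emph{support disjointness}, not by integration by parts in $\tau$. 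The paper does exactly this support argument (Lemma~\ref{cotlarsteinestimates}, the $H^*H$ part).

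Second, the diagonal piece. The paper takes Plancherel in $(x_2,x_3)$ to reduce to the one-variable operator $H^{k,\ell}$ of \eqref{Hdefn}, and then runs Cotlar--Stein on a \emph{double} decomposition: $y_1$-boxes of width $2^{-\ell}$ (your $\nu$) \emph{and} $x_1$-boxes of width $2^{2\ell-k}$. The point of the extra $x_1$-localization is that the diagonal Cotlar--Stein bound then becomes a one-line Schur estimate from support sizes ($2^{-\ell}\cdot 2^{2\ell-k}=2^{\ell-k}$), while the $x_1$-off-diagonal decay comes from integration by parts in $y_1$, using $|\tilde\tau\cdot S_{x_1y_1}|\simeq 2^{-\ell}$ together with the amplitude bound $|\partial_{y_1}^j a_{k,\ell}|\lesssim 2^{\ell j}$; this is precisely where the scale $2^{2\ell-k}$ enters. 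Your ``single-scale H\"ormander estimate'' for the $\nu$-diagonal implicitly needs exactly this computation: the amplitude has $y_1$-derivatives of size $2^\ell$, so the gain from IBP in $y_1$ on $H^\nu(H^\nu)^*$ is only $(2^{k-2\ell}|x_1-w_1|)^{-1}$, and the kernel is essentially concentrated where $|x_1-w_1|\lesssim 2^{2\ell-k}$ before Schur gives $2^{\ell-k}$. That is not quite the textbook H\"ormander bound (which assumes bounded amplitude derivatives), and your sketch invoking the ``second-derivative lower bound from \eqref{Sxjy2}'' does not do this bookkeeping. It can certainly be made to work, but what you end up writing is the paper's $m$-index argument in disguise.

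In short: your Cotlar--Stein-in-$\nu$ proposal is sound once you (i) replace the physical-space output-support claim by the $\tau$-support argument, and (ii) actually prove the diagonal bound, which requires tracking the $x_1$-scale $2^{2\ell-k}$ exactly as the paper does via its $(m,n)$-decomposition.
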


Proposition \ref{L2boundA} will be proven in Section \ref{L2section} following methods of almost-orthogonality found in the proof of the Calder\'on-Vaillancourt theorem (see \cite{MuSc13}, \S \, 9.2), originally introduced into this context by Phong and Stein \cite{PhSt91} and Cuccagna \cite{Cu97}. After that we prove the following decoupling inequality.

\begin{prop}\label{decoupling}
    For every $\varepsilon>0$ there exists $N>0$ such that
    \begin{align*}
        \Big\|\sum_\nu \A_{k,\ell} f_\nu\Big\|_{L^p}&\lesssim_\varepsilon 2^{\ell(1/2-1/p+\varepsilon)}\Big(\sum_\nu \|\A_{k,\ell} f_\nu\|_{L^p}^p\Big)^{1/p}+2^{-kN}\|f\|_{L^p}
    \end{align*}
    for $2\le p\le 6$ and $\ell\le k/2$.
\end{prop}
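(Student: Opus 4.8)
\textbf{Proof proposal for Proposition \ref{decoupling}.}

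The plan is to exploit the fact that $\A_{k,\ell}f_\nu$ has Fourier support concentrated near the cone $\Sigma_x^{\tilde{\Phi}}$ (cf. \eqref{Sigmax}), and that the pieces indexed by $\nu$ correspond to localizing $y_1$ to intervals of length $2^{-\ell}$, which by \eqref{Sxjy}–\eqref{Lparametrization} translates into a $2^{-\ell}$-angular sectorial decomposition of a cone with one non-vanishing principal curvature. Were the cone $\Sigma_x^{\tilde{\Phi}}$ independent of $x$, one could apply the Bourgain–Demeter decoupling inequality for the cone in $\rr^3$ (\cite{BoDe15}), at $\ell^p$-scale $2^{\ell(1/2-1/p+\varepsilon)}$ for $2\le p\le 6$, essentially verbatim as in \cite{PrSe07}. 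The obstruction, as flagged after \eqref{Sigmax}, is that $\Sigma_x^{\tilde{\Phi}}$ genuinely depends on $x_1$ (through $\gamma_2(x_1-y_1)$ in the first component of the normal direction), so decoupling cannot be applied in one shot. Following the iterative scheme of \cite{AnClPrSe2018}, Section 4, I would instead localize $x_1$ to an interval of length $2^{-j}$, apply decoupling from scale $2^{-j}$ to scale $2^{-(j+1)}$ (or from dyadic to dyadic in $j$), then perform an affine change of variables in the $(x_2,x_3)$ variables — adapted to freezing $\gamma_2(x_1-y_1)$ at the center of the current $x_1$-interval — so that on the next, finer scale the cone varies half as much; iterate $j$ from $O(1)$ down to $j\sim\ell$.

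The key steps, in order: (1) Reduce, by a Littlewood–Paley projection and the $|\tau|\sim 2^k$ localization already built into $\A_{k,\ell}$, to showing the decoupling for a single frequency-annulus piece whose Fourier support lies in a $C 2^k$-neighborhood of the (curved, $x_1$-dependent) cone — the error term $2^{-kN}\|f\|_{L^p}$ absorbs the tails of this localization. (2) Set up the induction on scales: for each dyadic $2^{-j}$ with $1\lesssim j\le \ell$, partition $\supp\chi$ in $x_1$ into intervals $J$ of length $2^{-j}$; on each $J$ apply the standard cone decoupling of \cite{BoDe15} at the one-scale step $j\to j+1$, which costs a factor $2^{(1/2-1/p+\varepsilon')/1}$ per dyadic step and, telescoped over $\log_2(2^\ell)=\ell$ steps, yields the total $2^{\ell(1/2-1/p+\varepsilon)}$ with $\varepsilon = (\log\text{-number of steps})\cdot\varepsilon'$ after choosing $\varepsilon'$ small. (3) At each step, implement the change of variables $x'\mapsto x' + (\text{linear in }x_2,x_3\text{ correction freezing }\gamma_2)$ — this is an $L^p$-isometry up to harmless Jacobian factors and does not interact with the $y_1$-localization defining $f_\nu$ — so that the cone seen at scale $2^{-(j+1)}$ is flat to within $O(2^{-(j+1)})$, exactly the tolerance the Bourgain–Demeter theorem allows. (4) Sum the geometric series of losses and collect the negligible $2^{-kN}$ Schwartz-tail errors generated at each of the $\lesssim \ell \lesssim k$ stages.

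The main obstacle is step (3): making precise that at each scale the residual curvature of $\Sigma_x^{\tilde{\Phi}}$, after the affine straightening, is small enough (uniformly in the relevant parameters) that the flat-decoupling / cone-decoupling theorem applies with constants independent of $j$ and $k$, and simultaneously that the sectors in $\nu$ at the current scale match the $2^{-j}$-plates of the cone being decoupled. This requires carefully tracking how the $2^{-\ell}$-localization in $y_1$ sits inside the coarser $2^{-j}$-decomposition (one decouples only the "new" dyadic digit of $\nu$ at each stage, then recombines), and verifying that the change of variables depends only on $x_1$ and the center of $J$, not on $y_1$, so that it commutes with the decomposition $f=\sum_\nu f_\nu$. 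A secondary technical point is handling the endpoint stage $\ell=\lfloor k/2\rfloor$, where $a_{k,\ell,\pm}$ is defined by the complementary cutoff rather than a clean dyadic annulus in $\tau_2-y_1\tau_3$; there one argues that this piece is morally at distance $\gtrsim 2^k\cdot 2^{-\ell}\sim 2^{k/2}$ from $\mathcal{L}$ and can be treated as a single plate, so decoupling is trivial (or follows from the trivial $\ell^p$–$\ell^1$ bound) at that final scale.
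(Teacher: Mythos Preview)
Your overall strategy---iterate cone decoupling with a change of variables at each step to progressively freeze the $x_1$-dependent cone $\Sigma_x^{\tilde\Phi}$, following \cite{AnClPrSe2018}---is exactly what the paper does. But there is a genuine gap in your scale bookkeeping. You propose $\sim\ell$ dyadic steps (scale $2^{-j}\to 2^{-(j+1)}$), each costing a Bourgain--Demeter constant $C(\varepsilon')$; iterated $\ell$ times this produces $C(\varepsilon')^\ell$, which is $2^{c\ell}$ for some fixed $c>0$ and cannot be absorbed into $2^{\ell\varepsilon}$ for small $\varepsilon$. The paper avoids this by taking $O(1/\varepsilon)$ \emph{large} steps: set $\delta_0=2^{-\ell\varepsilon/8}$, $\delta_j=\delta_{j-1}2^{-\ell\varepsilon/4}$, and iterate until $\delta_{j^*}\le 2^{-\ell(1-\varepsilon)}$, which takes $j^*\lesssim 4/\varepsilon$ steps. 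The accumulated constant is then $C(\varepsilon')^{4/\varepsilon}$, independent of $k$ and $\ell$.

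This choice of step size is not cosmetic; it is coupled to the $x_1$-localization scale, and your proposed $x_1$-scale $2^{-j}$ is too fine for the error analysis to close. The paper localizes $x_1$ once to intervals of fixed length $\varepsilon_1=(\delta_{j+1}/\delta_j)^2=2^{-\ell\varepsilon/2}$; this is precisely what the plate estimates (their Lemma~\ref{plateestimates}) need so that $\langle u_3,\nabla_x(\tau\cdot\Phi_b)\rangle\lesssim\delta_1^2$, and it also guarantees that integration by parts off the plates gains $(2^k\varepsilon_1\delta_1^2)^{-N}\lesssim 2^{-ck\varepsilon N}$. With your choice $\varepsilon_1=2^{-j}$ and $\delta_1=2^{-(j+1)}$, at the final stage $j\sim\ell$ one has $2^k\varepsilon_1\delta_1^2\sim 2^{k-3\ell}$, which for $\ell$ near $k/2$ is $\lesssim 2^{-k/2}$---so the ``negligible'' error term is in fact enormous. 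Finally, a minor correction: the change of variables is $x'\mapsto x'-S(x_1,b)$, centered at the $y_1$-interval center $b$ (not the $x_1$-center $a$) and depending nonlinearly on $x_1$; both centers $a$ and $b$ enter the plate definition, but it is the $y_1$-centering that makes the normal component pick up the second-order vanishing from the fold.
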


Following a similar approach to \cite{AnClPrSe2018} and \cite{PrSe19}, we prove Proposition \ref{decoupling} in Section \ref{DecouplingSection} using induction. At each step we combine $l^p$ decoupling with suitable changes of variables. Note that in \cite{AnClPrSe2018} and \cite{PrSe19} similar decoupling estimates was proven, but only for $\ell\le k/3$; we prove a larger range of $\ell$ because our operator has only one fold singularity.

\begin{proof}[Proof that Propositions \ref{L2boundA} and \ref{decoupling} imply Proposition \ref{decomposedfinal}]

    We begin by proving an $L^\infty$ estimate for $\A_{k,\ell}$, namely that
    \begin{align}
        \sup_{\nu} \|\A_{k,\ell}f_\nu\|_\infty&\lesssim 2^{-\ell} \sup_\nu \|f_\nu\|_\infty \label{linftyLinfty}\\
        \|\A_{k,\ell} f\|_\infty&\lesssim \|f\|_\infty. \label{Linfty}
    \end{align}

    First, by \eqref{Akl} we see that for fixed $y_1$ and any $N_1,N_2\in\nn$,
    \begin{align*}
        (\partial_{\tau_2}-y_1\partial_{\tau_3})^{N_1}[\chi_k(|\tau|)a_{k,\ell,\pm}(y_1,\tau)]&\lesssim_{N_1} 2^{(\ell-k)N_1} \\
        (y_1\partial_{\tau_2}+\partial_{\tau_3})^{N_2}[\chi_k(|\tau|)a_{k,\ell,\pm}(y_1,\tau)]&\lesssim_{N_2} 2^{(-k)N_2}.
     \end{align*}
    Thus we integrate by parts with respect to these directions in $\tau$, garnering for any $N>0$
    \begin{align*}
        |\A_{k,\ell}f_\nu(x)|&\le \sup_x \|f_\nu\|_\infty \int_{|y_1-\nu|\le 2^{-\ell}} \iint_{\supp\,  \chi_k a_{k,\ell,\pm}} C_N \\
        & \qquad \times \frac{1}{(1+2^{k-\ell}|\tilde{\Phi}^2(x,y)-y_1\tilde{\Phi}^3(x,y)|)^N} \\
        & \qquad \times \frac{1}{(1+2^{k}|y_1\tilde{\Phi}^2(x,y)+\tilde{\Phi}^3(x,y)|)^N} \, d\tau\,dy'\,dy_1.
    \end{align*}
    Since the size of the support of $\chi_k(|\tau|)a_{k,\ell,\pm}(y_1,\tau)$ in $\tau$-space (for fixed $y_1$) is $2^{k-\ell}$ in the $(1,-y_1)$ direction and $2^k$ in the $(y_1,1)$ direction, integrating in $\tau$ and $y'$ gains a constant independent of $x,\ell,$ and $k$. Finally, integrating in $y_1$ yields the desired bounds. 

    Interpolating \eqref{linftyLinfty} with \eqref{l2L2est} we obtain
    \begin{align}
        \Big(\sum_\nu \|\A_{k,\ell} f_\nu\|_p^p\Big)^{1/p} &\lesssim 2^{\ell(3/p-1)}2^{-k/p}\Big(\sum_\nu \|f_\nu\|_p^p\Big)^{1/p}, \qquad 2\le p\le \infty.
    \end{align}
    Combining this estimate with Proposition \ref{decoupling} we obtain
    \begin{equation}\label{epsilonnegative}
        \|\A_{k,\ell} f\|_p\lesssim_\varepsilon 2^{\ell(\varepsilon+2/p-1/2)}2^{-k/p}\Big(\sum_{\nu} \|f_\nu\|_p^p\Big)^{1/p} + 2^{-kN}\|f\|_p, \qquad 2\le p\le 6.
    \end{equation}
    Note that the power of $2^\ell$ in \eqref{epsilonnegative} is negative if $4<p\le 6$ and $\varepsilon$ is sufficiently small. A further interpolation with the $L^\infty$ estimate \eqref{Linfty} yields Proposition \ref{decomposedfinal} for $p>4$. 

\end{proof}

%%%%%%%%%%%%%%%%%%%%%%%%%%%%%%%%%%%%%%%
\section{$L^2$ Estimates}\label{L2section}

Define the oscillatory integral operator
\begin{equation}\label{Hdefn}
    H^{k,\ell}g(x_1; \tau):=\int e^{i\tau\cdot S(x_1,y_1)} a_{k,\ell}(y_1,\tau)\chi(y_1) \chi_k(|\tau|)g(y_1,\tau) \, dy_1.
\end{equation}
To prove Proposition \ref{L2boundA} it suffices to prove a uniform bound in $\tau$ for $H^{k,\ell}$ on $L^2(\rr)$.
\begin{prop}\label{L2boundH}
    There exists a constant $C>0$ such that for all $\tau\in\rr$ 
    \begin{equation}
        \int|H^{k,\ell}g(x_1;\tau)|^2 \, dx_1\le C2^{\ell-k} \int|g(y_1,\tau)|^2 \, dy_1.    
    \end{equation}
\end{prop}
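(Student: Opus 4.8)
The plan is to prove the bound via the Cotlar--Stein almost-orthogonality lemma, decomposing $H^{k,\ell}$ into pieces localized in the $y_1$ variable at the natural scale dictated by the oscillation. Write $n := k - 2\ell \ge 0$ for the remaining "oscillatory budget" after factoring out the $a_{k,\ell}$ localization. The key geometric fact, recorded in \eqref{Sxjy}, is that on the support of $a_{k,\ell}(y_1,\tau)$ one has $|\tau_2 - y_1\tau_3|\simeq 2^{k-\ell}$ (away from $\mathcal{L}$; the endpoint case $\ell=\lfloor k/2\rfloor$ is handled separately since there $|\tau_2-y_1\tau_3|\lesssim 2^{k-\ell}\simeq 2^{k/2}$), and hence
\[
\partial_{y_1}^2\big[\tau\cdot S(x_1,y_1)\big] = -\gamma_2''(x_1-y_1)(\tau_2-y_1\tau_3) - \tau_3\gamma_2'(x_1-y_1)
\]
together with the first derivative bound from \eqref{Sxjy}: $\partial_{y_1}\partial_{x_1}[\tau\cdot S] = \gamma_2''(x_1-y_1)(\tau_2-y_1\tau_3)$, which has size $\simeq 2^{k-\ell}$. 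So the phase, as a function of $x_1$, has a gradient whose $y_1$-derivative is of size $2^{k-\ell}$; this is the mechanism by which two pieces $H^{k,\ell}_j$ supported in $y_1$-intervals $I_j$, $I_{j'}$ of length $\delta$ become almost orthogonal once $|j-j'|$ is large.

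First I would partition $\supp\chi \cap \{|\tau_2 - y_1\tau_3|\simeq 2^{k-\ell}\}$ in $y_1$ into intervals $I_j$ of length $\delta = 2^{-\ell}$ (one should check this is the right scale: on each $I_j$ the first $x_1$-derivative of the phase varies by $O(2^{k-\ell}\cdot 2^{-\ell}) = O(2^{k-2\ell}) = O(2^n)$, which is $\gtrsim 1$ when $n\ge 0$, so each piece is genuinely oscillatory unless $n$ is bounded, in which case the trivial bound already suffices). Set $H^{k,\ell}_j g(x_1;\tau) = \int_{I_j} e^{i\tau\cdot S(x_1,y_1)} a_{k,\ell}\chi\,\chi_k\, g\, dy_1$. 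For the diagonal Cotlar--Stein input I would bound $\|H^{k,\ell}_j\|_{L^2\to L^2}$ by a $TT^*$ argument: the kernel of $H^{k,\ell}_j (H^{k,\ell}_j)^*$ is $\int_{I_j} e^{i\tau\cdot(S(x_1,y_1)-S(x_1',y_1))} (\dots)\, dy_1$, and non-stationary phase in $y_1$ using $\partial_{y_1}[\tau\cdot(S(x_1,y_1)-S(x_1',y_1))] \simeq 2^{k-\ell}|x_1-x_1'|$ (from the mixed derivative bound above) gives rapid decay in $2^{k-\ell}|x_1-x_1'|$, whence the kernel is supported essentially in $|x_1-x_1'|\lesssim 2^{\ell-k}$ with size $\lesssim 2^{-\ell}$ (the length of $I_j$), so $\|H^{k,\ell}_j\|^2 \lesssim 2^{-\ell}\cdot 2^{\ell-k} = 2^{-k}$, i.e. $\|H^{k,\ell}_j\|\lesssim 2^{-k/2}$; summing the $2^\ell$ diagonal pieces gives exactly the claimed $2^{(\ell-k)/2}$ up to the almost-orthogonality. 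For the off-diagonal terms $\|(H^{k,\ell}_j)^* H^{k,\ell}_{j'}\|$ and $\|H^{k,\ell}_j (H^{k,\ell}_{j'})^*\|$, I would again use $TT^*$ and integrate by parts in $x_1$ (resp. $y_1$) exploiting that $\partial_{x_1}[\tau\cdot(S(x_1,y_1)-S(x_1,y_1'))]$ has absolute value $\gtrsim 2^{k-\ell}|y_1 - y_1'|\gtrsim 2^{k-\ell}\delta|j-j'|\gtrsim 2^{k-2\ell}|j-j'|$, which grows, producing a Cotlar--Stein decay factor like $(1+|j-j'|)^{-N}$ (here one uses that on the overlap the first derivative is not just large but monotone in the right sense, or one splits off the critical point of $x_1\mapsto$ phase-difference, which is isolated because of the $\gamma_2''\ne 0$ hypothesis entering through \eqref{newfold}).

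The main obstacle I expect is the careful bookkeeping of the two competing scales inside the phase --- the $2^{k-\ell}$ scale coming from $\tau_2 - y_1\tau_3$ and the $2^k$ scale coming from $\tau_3$ --- and in particular handling the endpoint band $\ell = \lfloor k/2\rfloor$ where the localization $a_{k,\ell}$ no longer keeps us away from $\mathcal{L}$ and the second-derivative term $\tau_3\gamma_2'$ of size $2^k$ can dominate. In that regime the phase $y_1\mapsto \tau\cdot S(x_1,y_1)$ genuinely has a nondegenerate critical point (by \eqref{newfold}, since $\gamma_2''\ne 0$ forces $\partial_{y_1}^2[\tau\cdot S]$ to be $\simeq 2^k$ wherever $\partial_{y_1}[\tau\cdot S]$ vanishes on $\mathcal{L}$), so I would instead invoke stationary phase / van der Corput directly on $H^{k,\ell}_j(H^{k,\ell}_j)^*$, getting the $2^{-k/2}$ gain from the $|\tau|^{-1/2}\simeq 2^{-k/2}$ decay of the oscillatory integral, which matches $2^{(\ell-k)/2}$ when $\ell\simeq k/2$. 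One must check uniformity in $\tau$ throughout, but since all derivative bounds above are homogeneous in $|\tau|\simeq 2^k$ and the $\gamma_2^{(j)}$ are bounded on the compact $x_1$-support, this is automatic.
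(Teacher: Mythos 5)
Your overall strategy --- Cotlar--Stein plus a $TT^*$ argument with nonstationary phase --- is the right one and is essentially what the paper does, but there are two compensating bookkeeping errors in your diagonal estimate, and you are missing the $x_1$-decomposition that the paper uses to organize the argument cleanly.

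First, the decay scale in your $TT^*$ step is off: you claim the kernel of $H^{k,\ell}_j(H^{k,\ell}_j)^*$ is essentially supported in $|x_1-x_1'|\lesssim 2^{\ell-k}$. The phase $y_1$-derivative is indeed $\simeq 2^{k-\ell}|x_1-x_1'|$, but the amplitude $a_{k,\ell}(y_1,\tau)$ together with the smooth cutoff to $I_j$ has $y_1$-derivatives of size $\simeq 2^\ell$ (since $\partial_{y_1}a_{k,\ell}\simeq 2^{\ell-k}\cdot|\tau_3|\simeq 2^\ell$), so integration by parts gains only when $2^{k-\ell}|x_1-x_1'|\gg 2^\ell$, i.e.\ $|x_1-x_1'|\gg 2^{2\ell-k}$. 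With the corrected width one gets $\|H^{k,\ell}_j\|^2\lesssim 2^{-\ell}\cdot 2^{2\ell-k}=2^{\ell-k}$, i.e.\ $\|H^{k,\ell}_j\|\lesssim 2^{(\ell-k)/2}$, \emph{not} $2^{-k/2}$ --- and this is already the desired bound. Second, your phrase ``summing the $2^\ell$ diagonal pieces gives $2^{(\ell-k)/2}$'' conflates the orthogonality of vectors with the orthogonality of operators: for almost-orthogonal operator pieces Cotlar--Stein gives $\|H\|\lesssim\sup_j\|H^{k,\ell}_j\|$, not $(\sum_j\|H^{k,\ell}_j\|^2)^{1/2}$. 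Moreover there are not $2^\ell$ nonzero pieces: for fixed $\tau$ in $\supp\chi_k$ the amplitude $a_{k,\ell}(\cdot,\tau)$ forces $|\tau_2-y_1\tau_3|\lesssim 2^{k-\ell}$, and (as the paper verifies at the end of the proof of Lemma~\ref{cotlarsteinestimates}) $|\tau_3|\gtrsim 2^k$ on that support, so $y_1$ is confined to $O(1)$ intervals of length $\simeq 2^{-\ell}$ --- your $y_1$-decomposition is already near-trivial. These two errors are both off by $2^\ell$ and so ``cancel,'' producing a correct-looking final exponent by coincidence.

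The paper instead decomposes $(x_1,y_1)$ into rectangles $q_{m,n}$ of size $\tilde{C}^{-1}2^{2\ell-k}\times 2^{-\ell}$, which makes the $2^{2\ell-k}$ scale explicit: the diagonal $m=\tilde m$ gives the $2^{\ell-k}$ bound by the trivial kernel bound plus Schur's test over a single $x_1$-box, and the almost-orthogonality comes from IBP in $y_1$ (not $x_1$) using the $x_1$-separation between boxes $m\ne\tilde m$, where the higher $y_1$-derivatives of the phase difference are controlled via the Taylor expansion \eqref{highyderiv} and Lemma~\ref{IBPlemma}. Your proposed IBP in $x_1$ for $H_j^*H_{j'}$ would in any case run into trouble: higher $x_1$-derivatives of the phase difference $\tau\cdot(S(x_1,y_1)-S(x_1,z_1))$ contain quadratic Taylor corrections in $|y_1-z_1|$ that grow like $|j-j'|^2$, violating the hypothesis $|\partial^j\phi|\lesssim D^{j-1}E$ in Lemma~\ref{IBPlemma}; the paper avoids this entirely by using disjointness of supports. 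Finally, your separate stationary-phase treatment of $\ell=\lfloor k/2\rfloor$ is not needed: at the endpoint the $x_1$-boxes have size $\simeq 1$, so the whole Cotlar--Stein argument collapses to the trivial Schur bound, which already yields $2^{\ell-k}\simeq 2^{-k/2}$.
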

To see why this suffices, let $\mathfrak{F}_{2,3}$ denote the Fourier transform in the second and third variables. Then because the phase for the kernel of $\A_{k,\ell}$ has the form
\[
\tau\cdot\tilde{\Phi}(x,y)=\tau_2(x_2-y_2)+\tau_3(x_3-y_3)+\tau\cdot S(x_1,y_1)
\]
we see that $\mathfrak{F}_{2,3}^{-1}\big[H^{k,\ell}(\mathfrak{F}_{2,3}f)\big](x_1,x')=\A_{k,\ell}f(x)$. 
% If we let $g(y_1,\tau)=\mathfrak{F}_{2,3}f(x_1,\tau)$ and apply Plancherel, we see that
% \begin{align*}
% \int \int |Hg(x_1;\tau)|^2 \, dx_1 \, d\tau&=\int \int \Big|\mathfrak{F}^{-1}_{2,3}\Big[H(\mathfrak{F}_{2,3}f)\Big](x_1,x')\Big|^2 \, dx_1 \, dx' \\
% &=\int \int \Big|\int e^{i\tau\cdot x'} \Big[H\Big(\int e^{-i\tau\cdot y'} f(y_1,y') \, dy' \Big)\Big](x_1,x') \, d\tau \Big|^2 \,  dx' dx_1 \\
% &=\int|\A_{k,\ell}f(x)|^2 \, dx.
% \end{align*}
Hence by two applications of Plancherel the estimate in Proposition \ref{L2boundH} implies the estimate in Proposition \ref{L2boundA}. For the rest of the section we fix $k,\ell$, and suppress the dependence of $H^{k,\ell}$ on $k$ and $\ell$ by writing $H:=H^{k,\ell}$.

Let $\C>0$ be a large constant to be picked later (independently of $k$ and $\ell$). At a loss of a finite constant, we can assume that $|x_1|,|y_1|\le \C^{-1}$. For $m,n\in\zz$ define disjoint rectangles $q_{m,n}$ to be the set of points $(x_1,y_1)$ satisfying
\begin{align}
    0 &\le \C 2^{k-2\ell}x_1-m \le 1 \\
    0 &\le 2^{\ell}y_1-n \le 1.
\end{align}
Thus $q_{m,n}$ is a rectangle indexed by $(m,n)\in\zz\times\zz$ aligned with the coordinate axes whose side parallel to the $x_1$-axis has length $\C^{-1}2^{2\ell-k}$ and whose side parallel to the $y_1$-axis is $2^{-\ell}$. Note that for $\ell\ge k/3$ the boxes $q_{m,n}$ are longer in the $x_1$ direction, reflecting the fact that under the blowdown condition on $\pi_R$ we don't expect much orthogonality in the $x_1$ direction. For each $m,n\in\zz$ let $\chimn_m(x_1)$ and $\chimn_n(y_1)$ be smooth cutoffs such that $\chimn_m(x_1)\chimn_n(y_1)$ is equal to 1 on $q_{m,n}$ and is supported on its double, and furthermore such that $\sum_{m,n}\chimn_m(x_1)\chimn_n(y_1)=1$.

Define $H_{m,n}:=\chimn_m(x_1)H[\chimn_n(y_1)\cdot]$ so that $\sum_{m,n} H_{m,n}=H$. By splitting our operator $H$ into a finite number of collections of $\{H_{m,n}\}$ we may assume that if $m\ne \tilde{m}$ then $|m-\tilde{m}|>2\C$ and if $n\ne \tilde{n}$ then $|n-\tilde{n}|> 2\C$.

%\begin{equation}
%    \nabla_{x'}[\ophi(x,y)-\ophi(x,z)](x,z)=
%\end{equation*}
%\begin{equation*}
%    \begin{pmatrix}
        %- \Delta_{y_1}  x'\cdot S_{x_1 y_1}(x_1,y_1)- \Delta_{y_1}^2  x'\cdot S_{x_1 y_1^2}(x_1,y_1)+|\Delta_{y_1}|^3r_1(x,y) \\
%        \Delta_{y'} + \Delta_{y_1}\big(S_{y_1}(x_1^Q,y_1)-S_{y_1}(x_1,y_1)\big) + |\Delta_{y_1}|^2r'(x,y),
    %\end{pmatrix}
%\end{equation}
%where $r'$ is smooth function with bounded derivatives. The same calculation for the phase of $\tilde{K}$ yields
%\begin{equation}
%    \nabla_{y}[\ophi(x,y)-\ophi(w,y)]=
%\end{equation*}
%\begin{equation*}
%    \begin{pmatrix}
%        \Delta_{x'}\cdot\big[S_{y_1}(w_1,y_1)-S_{y_1}(x_1^Q,y_1)\big]+\Delta_{x_1} w'\cdot S_{x_1y_1}(w_1,y_1)\big)+I+2^{-3\ell}\rho(x,y) \\
%        w'-x'
%    \end{pmatrix},
%\end{equation*}
%where  
%\begin{align*}
%    I&= \sum_{j=2}^{L} \tfrac{1}{j!}\Delta_{x_1}^j w'\cdot S_{x_1^j y_1}(w_1,y_1)+\sum_{j=1}^{L-1} \tfrac{1}{j!} \Delta_{x_1}^j\Delta_{x'}\cdot S_{x_1^j y_1}(w_1,y_1),
%\end{align*}
%$L>0$ is chosen such that $|\Delta_{x_1}|^L+|\Delta_{x_1}^{L-1}\Delta_{x'}|\le 2^{-3\ell}$, and $\rho$ is again smooth with bounded derivatives.

We prove that the operators $H_{m,n}$ are almost orthogonal by the following estimates.
\begin{lem}\label{cotlarsteinestimates}
    If $|n-\tilde{n}|>2\C$ then
    \begin{equation}\label{disjoint1}
            \int |H_{m,n}H^*_{\tilde{m},\tilde{n}}g(x_1,\tau)|^2 \, dx_1=0.
    \end{equation}
    If $n=\tilde{n}$ then for every $N>0$
    \begin{align}\label{HHstar}
        \left(\int |H_{m,n}H^*_{\tilde{m},\tilde{n}}g(x_1,\tau)|^2 \, dx_1\right)^{1/2}\lesssim_N 2^{\ell-k}(1+|m-\tilde{m}|)^{-N}\Big(\int |g(y_1,\tau)|^2 \, dy_1\Big)^{1/2}.
    \end{align}
    If $|m-\tilde{m}|>2\C$ or if $m=\tilde{m}$ and $|n-\tilde{n}|>2\C$ then
    \begin{equation}\label{HstarH}
            \int |H_{m,n}^*H_{\tilde{m},\tilde{n}}g(y_1,\tau)|^2 \, dx_1=0.
    \end{equation}
\end{lem}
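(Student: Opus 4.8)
The plan is to prove the three estimates in Lemma~\ref{cotlarsteinestimates} by exploiting the structure of the phase $\tau\cdot S(x_1,y_1)$ together with the localization to the rectangles $q_{m,n}$, and then feed the result into the Cotlar--Stein lemma in the usual way (that part is standard and handled elsewhere). Write out the Schwartz kernels: $H_{m,n}H^*_{\tilde m,\tilde n}$ has kernel
\[
K(x_1,\tilde x_1)=\chimn_m(x_1)\chimn_{\tilde m}(\tilde x_1)\int e^{i\tau\cdot(S(x_1,y_1)-S(\tilde x_1,y_1))}\,b(y_1,\tau)\,\chimn_n(y_1)\chimn_{\tilde n}(y_1)\,dy_1,
\]
where $b$ gathers the amplitudes $a_{k,\ell}^2\chi^2\chi_k^2$, and $H^*_{m,n}H_{\tilde m,\tilde n}$ has kernel integrated in $x_1$ against $\chimn_m\chimn_{\tilde m}$ of $S(x_1,y_1)-S(x_1,\tilde y_1)$. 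The two ``$=0$'' statements \eqref{disjoint1} and \eqref{HstarH} are pure support considerations: when $|n-\tilde n|>2\C$ the cutoffs $\chimn_n(y_1)\chimn_{\tilde n}(y_1)$ have disjoint supports (the $y_1$-boxes are $2^{-\ell}$-wide and $2^{-\ell}$-separated once we have thinned out the family), so the $y_1$-integral vanishes identically; similarly for $H^*H$, when $|m-\tilde m|>2\C$ the $x_1$-cutoffs are disjoint, and when $m=\tilde m$ but $|n-\tilde n|>2\C$ one checks that on $\supp\chimn_m$ the map $y_1\mapsto S(x_1,y_1)$ together with the amplitude support forces the $y_1$ and $\tilde y_1$ arguments into disjoint ranges --- I would phrase this via the fact that $H_{\tilde m,\tilde n}g$ is supported (in the relevant variable after the $\tau$-frequency localization) in a set determined by $n,\tilde n$. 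Actually the cleanest route for \eqref{HstarH} is to note $H^*_{m,n}H_{\tilde m,\tilde n}$ is nonzero only if $\supp\chimn_m\cap\supp\chimn_{\tilde m}\neq\emptyset$ as functions of $x_1$, giving the first case, and in the second case the output of $H_{\tilde m,\tilde n}$ as a function of $y_1$ (its first slot after taking adjoint) is cut off by $\chimn_{\tilde n}$ while $H^*_{m,n}$ demands $\chimn_n$, again disjoint.

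The substantive estimate is \eqref{HHstar}, the decay in $|m-\tilde m|$ when $n=\tilde n$. Here both $x_1$ and $\tilde x_1$ lie in $x_1$-intervals of length $\C^{-1}2^{2\ell-k}$ that are $\gtrsim\C^{-1}2^{2\ell-k}$-separated (after thinning), and $y_1$ runs over an interval of length $2^{-\ell}$ near $\nu=n2^{-\ell}$. On $\mathcal{L}^{\tilde\Phi}$ we have $\tau_2-y_1\tau_3=0$; off it, the amplitude $a_{k,\ell}$ localizes $|\tau_2-y_1\tau_3|\simeq 2^{k-\ell}$ (for $\ell<k/2$) and $|\tau|\simeq 2^k$. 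Taylor-expand the phase difference in $x_1,\tilde x_1$ around the common box: by \eqref{Sxjy}, $\partial_{x_1}\partial_{y_1}[\tau\cdot S]=\gamma_2''(x_1-y_1)(\tau_2-y_1\tau_3)$, and since $\gamma_2''\neq0$ this has size $\simeq 2^{k-\ell}$; higher $x_1$-derivatives of $\partial_{y_1}[\tau\cdot S]$ have the same size. Thus $\partial_{y_1}\big(\tau\cdot(S(x_1,y_1)-S(\tilde x_1,y_1))\big)\simeq (x_1-\tilde x_1)\cdot 2^{k-\ell}$, which for $|x_1-\tilde x_1|\simeq |m-\tilde m|\C^{-1}2^{2\ell-k}$ is $\simeq |m-\tilde m|\C^{-1}2^{\ell}$. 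Meanwhile each further $\partial_{y_1}$ of the phase difference contributes at most a comparable factor (the relevant derivatives $\partial_{x_1}^j\partial_{y_1}^2[\tau\cdot S]$ from \eqref{Sxjy2} are $O(2^{k-\ell}+2^k\cdot\text{(size of }\tau_3\text{ piece)})$, but restricted to $|y_1-\nu|\le 2^{-\ell}$ and combined with the $2^{-\ell}$ length of the $y_1$-interval these are controlled). So the phase has a non-stationary $y_1$-derivative of size $\gtrsim \C^{-1}|m-\tilde m|2^{\ell}$ on an interval of length $2^{-\ell}$, i.e.\ roughly $|m-\tilde m|\C^{-1}$ oscillations; integrating by parts $N$ times in $y_1$ yields the factor $(1+|m-\tilde m|)^{-N}$ (here we must also verify that repeated $\partial_{y_1}$ hitting the amplitude $a_{k,\ell}(y_1,\tau)\chi_k(|\tau|)$ costs only $O(2^{\ell-k}\cdot 2^{k-\ell})=O(1)$ per derivative, as was already used for the $L^\infty$ bounds in the previous section). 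This produces a kernel bound $|K(x_1,\tilde x_1)|\lesssim_N 2^{-\ell}(1+|m-\tilde m|)^{-N}$ (the $2^{-\ell}$ from the $y_1$-interval length), uniformly in $\tau$ after integrating the frozen $\tau$ against nothing --- wait, $\tau$ is fixed here --- so in fact $|K(x_1,\tilde x_1)|\lesssim_N (1+|m-\tilde m|)^{-N}\cdot 2^{-\ell}$ on $x_1$-intervals of length $2^{2\ell-k}$; by Schur's test (or Cauchy--Schwarz over the intervals) the operator norm is $\lesssim 2^{2\ell-k}\cdot 2^{-\ell}\cdot(1+|m-\tilde m|)^{-N}=2^{\ell-k}(1+|m-\tilde m|)^{-N}$, which is \eqref{HHstar}.

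The main obstacle is bookkeeping the interplay between the two natural frequency directions $(1,-y_1)$ and $(y_1,1)$ in $\tau$-space and making sure the integration by parts in $y_1$ is legitimate: because $\tau$ is large ($|\tau|\simeq 2^k$) and $y_1$ varies by $2^{-\ell}$, the direction along which the amplitude is wide changes slightly with $y_1$, so $\partial_{y_1}$ of the amplitude a priori costs the full $2^k$ from differentiating $\tau\cdot S_{x_1}$-type terms unless one is careful. The resolution --- and the key technical point to get right --- is that on $\supp a_{k,\ell}$ one has the good bounds $(\partial_{y_1})^{N}[a_{k,\ell}(y_1,\tau)\chi_k(|\tau|)]\lesssim_N 2^{(\ell-k)N}\cdot(\text{size of }\tau_3\text{ or }\tau_2)^N$-type estimates, i.e.\ the effective cost is $O(1)$ per derivative, exactly as exploited in the $L^\infty$ computation \eqref{linftyLinfty}--\eqref{Linfty} in the previous section; I would simply invoke those same amplitude estimates. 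With that in hand the integration by parts goes through cleanly for all $\ell\le k/2$, including the endpoint case $\ell=\lfloor k/2\rfloor$ where $a_{k,\ell}$ is the ``remainder'' cutoff (there $|\tau_2-y_1\tau_3|\lesssim 2^{k/2}$ and one uses $\gamma_2''\neq 0$ to still extract $\gtrsim|m-\tilde m|$ oscillations from the $(x_1-\tilde x_1)$-factor times $2^{k-\ell}\gtrsim 2^{\ell}$, noting $2^{2\ell-k}\cdot 2^{k-\ell}=2^{\ell}$ is $\gtrsim 1$ only when the box is nontrivial --- one checks the trivial case $|m-\tilde m|\le 2\C$ separately where decay is vacuous and the bound reduces to the single-box estimate, essentially Proposition~\ref{L2boundH} localized).
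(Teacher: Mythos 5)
Your handling of \eqref{disjoint1}, \eqref{HHstar}, and the first case of \eqref{HstarH} is essentially the paper's argument, and the integration-by-parts scheme for \eqref{HHstar} is structured correctly (though I flag a minor arithmetic slip below). The genuine gap is in the second case of \eqref{HstarH}, the case $m=\tilde m$, $|n-\tilde n|>2\C$: your self-described ``cleanest route'' is wrong. In the kernel of $H^*_{m,n}H_{\tilde m,\tilde n}$ the cutoffs $\chimn_n$ and $\chimn_{\tilde n}$ land on \emph{different} variables --- $\chimn_n(y_1)$ on the output variable and $\chimn_{\tilde n}(z_1)$ on the input variable --- so their disjoint supports do not make the kernel vanish; the $x_1$-integral is over a common box while $y_1$ and $z_1$ are free to range over their separate intervals. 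The vanishing in this case comes entirely from the $\tau$-localization in the amplitudes: both $a_{k,\ell}(y_1,\tau)$ and $a_{k,\ell}(z_1,\tau)$ are evaluated at the same $\tau$, so on the support one has $|\tau_2-y_1\tau_3|\lesssim 2^{k-\ell}$ and $|\tau_2-z_1\tau_3|\lesssim 2^{k-\ell}$, whence $|\tau_3|\,|y_1-z_1|\lesssim 2^{k-\ell}$. To convert this into a bound $|y_1-z_1|\lesssim 2^{-\ell}$ (contradicting the separation $|y_1-z_1|\gtrsim\C 2^{-\ell}$ when $\C$ is large), one needs a lower bound $|\tau_3|\gtrsim 2^k$, and this requires a separate argument: on the support, $|\tau_2-y_1\tau_3|\lesssim 2^{k-\ell}$ together with $|y_1|\le 1$ and $|\tau|\simeq 2^k$ forces $|\tau_3|$ to be comparable to $|\tau|$ once $\ell$ is larger than a fixed constant (the paper uses a reverse triangle inequality here, and handles small $\ell$ by taking $\C$ large enough). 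Your first, vaguer sketch (``the amplitude support forces the $y_1$ and $\tilde y_1$ arguments into disjoint ranges'') gestures toward this but never identifies the mechanism, and in particular never establishes the needed lower bound on $|\tau_3|$; absent that, the argument does not close.

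One smaller correction in \eqref{HHstar}: the cost of each $\partial_{y_1}$ hitting the amplitude is $O(2^\ell)$, not $O(1)$. Differentiating $\chi_1\bigl(2^{\ell-k}(\tau_2-y_1\tau_3)\bigr)$ in $y_1$ brings down $2^{\ell-k}\tau_3$, and $|\tau_3|$ can be as large as $\simeq 2^k$ (not $\simeq 2^{k-\ell}$, which is the size of $\tau_2-y_1\tau_3$), so the cost is $2^{\ell-k}\cdot 2^k=2^\ell$; this matches the scale of $\partial_{y_1}\chimn_n(y_1)$. The final bound is unaffected because the phase derivative is of size $\simeq|m-\tilde m|\,2^\ell$, so the ratio is still $\simeq 1/|m-\tilde m|$ per integration by parts --- but as written your computation $O(2^{\ell-k}\cdot 2^{k-\ell})=O(1)$ would, if taken at face value, produce an over-strong gain of $(|m-\tilde m|2^\ell)^{-N}$, which should have been a signal that something was off.
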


By an easy application of the Cotlar-Stein Lemma the estimates in Lemma \ref{cotlarsteinestimates} imply Proposition \ref{L2boundH}. We remark that since we only need the operator norms of $H_{m,n}H^*_{\tilde{m},\tilde{n}}$ and $H^*_{m,n}H_{\tilde{m},\tilde{n}}$ to be summable we can prove the above Lemma (and thus Proposition \ref{L2boundA}) under the weaker assumption that $\gamma(t)$ is $C^5$. 

\begin{proof}
First, we replace $\tau=2^k\tilde{\tau}$ so that $|\tilde{\tau}|\simeq 1$. 

The Schwartz kernel of $H_{m,n}H^*_{\tilde{m},\tilde{n}}$ is given by 
\begin{equation*}
    K_{m,n,\tilde{m},\tilde{n}}(x_1,w_1,\tilde{\tau})= \int e^{i2^k\tilde{\tau}\cdot (S(x_1,y_1)-S(w_1,y_1))} \sigma(x_1,w_1,y_1,\tilde{\tau})\, dy_1,
\end{equation*}
where the amplitude $\sigma$ is given by
\begin{align*}
    \sigma(x_1,w_1,y_1,\tilde{\tau})&= |\chi_k(|\tilde{\tau}|)\chi(y_1)|^2\chi(x_1)a_{k,\ell}(y_1,2^k\tilde{\tau})\chimn_m(x_1)\chimn_n(y_1) \\
      & \qquad \times\overline{\chi}(w_1)\overline{a_{k,\ell}}(y_1,2^k\tilde{\tau})\overline{\chimn_{\tilde{m}}}(w_1)\overline{\chimn_{\tilde{n}}}(y_1). 
\end{align*}
Thus if $|n-\tilde{n}|>2\C$, $\sigma=0$ and the kernel vanishes, proving \eqref{disjoint1}. 

Assume $n=\tilde{n}$. Since the sizes of the supports of $\chimn_m(x_1)$ and $\chimn_{\tilde{m}}(w_1)$ are both $\C^{-1} 2^{2\ell-k}$, the estimate \eqref{HHstar} follows from the estimate
\begin{equation}\label{HHstarkernel}
    |K_{m,n,\tilde{m},n}(x_1,w_1,\tilde{\tau})|\le C_N 2^{-\ell}(1+|m-\tilde{m}|)^{-N}\chimn_{m}(x_1)\chimn_{\tilde{m}}(w_1)
\end{equation}
by an application of Schur's test.

First, if $m=\tilde{m}$ then \eqref{HHstarkernel} holds because the size of support of $\chimn_n(y_1)$ is $2^{-\ell}$. 
For the case $|m-\tilde{m}|>2\C$ we use a nonstationary phase argument. Define the operator
\begin{align*}
        \M_{y_1}f&=\frac{1}{2^k\tilde{\tau}\cdot(S_{y_1}(x_1,y_1)-S_{y_1}(w_1,y_1))} \partial_{y_1}f.
\end{align*}
To apply an integration by parts argument with $\M_{y_1}$ we need to carefully analyze the $y_1$ derivative of the phase of $K_{m,\tilde{m}}$. A Taylor approximation yields
\begin{align}
    \tilde{\tau}\cdot\partial_{y_1}[S(x_1,y_1)-S(w_1,y_1)]=& \sum_{j=1}^K\frac{1}{j!}\tilde{\tau}\cdot S_{x_1^j y_1}(w_1,y_1)(x_1-w_1)^j+|x_1-w_1|^{K+1}r(x_1,y_1,\tilde{\tau}), \notag
\end{align}
where $r(x_1,y_1,\tilde{\tau})$ is a smooth function with bounded derivatives independent of $k$ and $\ell$. Via \eqref{Sxjy} we see that as long as $\C$ is sufficiently large (independently of $k$ and $\ell$) the first term dominates the rest, and
\begin{equation} \label{lowerboundy1}
    |\tilde{\tau}\cdot\partial_{y_1}[S(x_1,y_1)-S(w_1,y_1)]|\ge c 2^{-\ell}|x_1-w_1|.
\end{equation}
Because the phase of $K_{m,n,\tilde{m},n}$ is not linear in $y_1$, applying $\M_{y_1}^*$ many times will result in the derivative hitting both $\sigma$ and higher derivatives of the phase function. To handle this technicality we refer to a standard calculus result which has been committed to paper as an appendix in \cite{AnClPrSe2018}, and state an immediate consequence below. 

\begin{lem}\label{IBPlemma} 
    Suppose $\phi\in C^\infty(\rr^d)$, and define the operator $\M =\langle \tfrac{\nabla \phi}{|\nabla \phi|^2},\nabla \cdot\rangle$. Suppose $g\in C^\infty(\rr^d)$ and there exists $D>0$ such that for every derivative $\partial^j$ of order $j\in\nn$, $|\partial^j g|\le D^j$. Assume that there exists some $E>0$ such that $|\nabla\phi|\ge E$ and $|\partial^j\phi|\le C_j D^{j-1} E$ for $j\ge 2$. Then for every $N>0$,
    \begin{equation}\label{IBPBound}
        |\left(\M^*\right)^N(g)|\lesssim_{N,d} \left(\frac{D}{E}\right)^N.
    \end{equation}
\end{lem}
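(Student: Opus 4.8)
The plan is to prove Lemma \ref{IBPlemma} by induction on $N$, unwinding the definition of the operator $\M^* = -\mathrm{div}\!\left(\tfrac{\nabla\phi}{|\nabla\phi|^2}\,\cdot\right)$ and tracking how many derivatives land on $\phi$ versus on $g$. First I would introduce the vector field $w = \nabla\phi/|\nabla\phi|^2$ and observe that, by the hypotheses $|\nabla\phi|\ge E$ and $|\partial^j\phi|\le C_j D^{j-1}E$ for $j\ge 2$, one has pointwise bounds of the shape $|\partial^\alpha w| \lesssim_{|\alpha|} D^{|\alpha|} E^{-1}$: the leading factor $1/|\nabla\phi|$ already contributes $E^{-1}$, each further derivative of $w$ either differentiates a $1/|\nabla\phi|^2$ factor (producing a $\partial^2\phi/|\nabla\phi|^3$-type term, i.e. an extra $C_2 D E \cdot E^{-2} = C_2 D E^{-1}$) or differentiates the numerator $\nabla\phi$ (producing $\partial^{j}\phi$, bounded by $C_j D^{j-1} E$, against the accumulated $|\nabla\phi|^{-2}$). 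A clean way to organize this is to prove by induction that $w$ and all its derivatives are finite sums of terms of the form $E^{-1}\cdot\prod_i (\partial^{a_i}\phi / E)$ with each $a_i\ge 1$ and $\sum_i(a_i-1)$ equal to the total order of differentiation, each such factor being bounded by a constant times $D^{a_i - 1}$; multiplying these gives $D^{\text{(order)}} E^{-1}$.

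Next I would set up the main induction. Writing $\M^*g = -\langle w,\nabla g\rangle - (\mathrm{div}\,w)\,g$, I claim more generally that $(\M^*)^N g$ is a finite sum of terms of the form $(\text{smooth coefficient})\cdot \partial^\beta g$ where the smooth coefficient is, by the structure lemma above, bounded by $C_{N,d}\,(D/E)^{N-|\beta|}\cdot E^{-|\beta|}$ — wait, more carefully: each application of $\M^*$ either adds one derivative to $g$ while multiplying by a component of $w$ (gaining $D^0 E^{-1}$, but the new $\partial g$ gains at most $D^{|\beta|}/D^{|\beta|-1}$... ) so the bookkeeping I actually want is: after $N$ steps, a typical term is $c(x)\,\partial^\beta g$ with $|\beta|\le N$, where $c$ is built from $w$ and its derivatives with total "$\phi$-derivative excess" equal to $N$ (counting one unit for the bare $w = \nabla\phi/|\nabla\phi|^2$ and one unit for each further differentiation), so $|c|\lesssim_{N,d} D^{N-|\beta|}E^{-N+|\beta|}\cdot E^{-|\beta|}\cdot(\text{wait})$. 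Let me restate the target of the induction cleanly in the proof: every term in $(\M^*)^N g$ has the form $c(x)\partial^\beta g$ with $|\beta|\le N$, $|\partial^\gamma c|\lesssim_{N,d,|\gamma|} D^{N-|\beta|+|\gamma|} E^{-(N-|\beta|)-|\beta|-|\gamma|}\cdot E^{|\beta|}$... This is exactly the routine bookkeeping I should not grind through here, but the punchline is that plugging in $|\partial^\beta g|\le D^{|\beta|}$ yields each term bounded by $C_{N,d}\,D^{N}E^{-N} = C_{N,d}(D/E)^N$, summing over the $O_{N,d}(1)$ terms to give \eqref{IBPBound}.

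To carry out the induction step I would differentiate a generic term $c\,\partial^\beta g$ by $\M^*$, which by the product rule produces $\langle w,\nabla c\rangle\partial^\beta g + c\langle w,\nabla\partial^\beta g\rangle + (\mathrm{div}\,w)\,c\,\partial^\beta g$; in each summand the new coefficient is $c$ or $\nabla c$ or $(\mathrm{div}\,w)c$ times a component of $w$, and the derivative order on $g$ goes up by at most one — and one checks the invariant bound on the coefficient is preserved using the structural description of $w$ from the first paragraph (each component of $w$ carries one extra "$\phi$-excess" unit and $D^0$; each extra derivative of a coefficient trades one unit of $D$ for, ultimately, the right powers). The main obstacle, and the only place real care is needed, is exactly this bookkeeping of the constants: making sure that the power of $D$ and the power of $E$ in the coefficient bound balance so that, against $|\partial^\beta g|\le D^{|\beta|}$, the $D$'s and $E$'s assemble into precisely $(D/E)^N$ with no leftover powers of $D$ or $E$, uniformly in which term of the sum we look at. Since the statement is explicitly cited as a standard calculus fact with a written-up proof in \cite{AnClPrSe2018}, I would keep this argument brief, emphasizing the two inductions (structure of $w$; form of $(\M^*)^N g$) and referring to \cite{AnClPrSe2018} for the detailed constant-tracking.
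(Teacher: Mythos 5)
The paper does not actually prove Lemma \ref{IBPlemma}; it is stated as ``an immediate consequence'' of an appendix lemma in \cite{AnClPrSe2018} and the reader is referred there. So there is no in-paper proof to compare against, and your task reduces to whether your blind sketch supplies a correct argument. It does, and it is essentially the argument the cited appendix carries out. The two ingredients you identify are exactly right: (i) the pointwise bound $|\partial^\alpha w|\lesssim_{|\alpha|} D^{|\alpha|}E^{-1}$ for $w=\nabla\phi/|\nabla\phi|^2$, which follows because a generic term in $\partial^\alpha w$ has the form $\prod_{i=1}^{2j+1}\partial^{a_i}\phi\,/\,|\nabla\phi|^{2j+2}$ with $a_i\ge 1$, $\sum(a_i-1)=|\alpha|$, and the factors with $a_i=1$ are estimated against a matching power of $|\nabla\phi|$ while those with $a_i\ge 2$ use $|\partial^{a_i}\phi|\le C_{a_i}D^{a_i-1}E$; and (ii) the structural induction on $(\M^*)^Ng$.

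Where your write-up wobbles is precisely the invariant you flag as ``routine bookkeeping I should not grind through here'': the intermediate formulas you float (the ``$\phi$-derivative excess,'' the several attempted exponents on $D$ and $E$) are not consistent, and you never settle on one. The clean invariant, which makes the punchline immediate, is: after $N$ applications, $(\M^*)^N g$ is a sum of $O_{N,d}(1)$ terms of the form
\[
\Big(\prod_{j=1}^{N}\partial^{\alpha_j}w_{i_j}\Big)\,\partial^\beta g,
\qquad \sum_{j=1}^N|\alpha_j|+|\beta|=N,
\]
i.e.\ there are exactly $N$ factors drawn from $w$ and its derivatives (one new factor of $w$ per application of $\M^*$, coming from $\M^*h=-\langle w,\nabla h\rangle-(\mathrm{div}\,w)h$), and exactly $N$ derivatives distributed among those factors and $g$. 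With (i) and $|\partial^\beta g|\le D^{|\beta|}$ (and $|g|\le 1$, implicit in the hypothesis with $D^0=1$), each term is $\lesssim D^{\sum|\alpha_j|}E^{-N}\cdot D^{|\beta|}=(D/E)^N$. Your final ``induction step'' paragraph, where you expand $\M^*[c\,\partial^\beta g]$ by the product rule, is the correct verification that this invariant propagates, so the argument closes once the invariant is stated correctly.
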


%\begin{proof}
%    Using a one-dimensional version of Lemma A.2 in \cite{AnClPrSe2018} for convenience, 
%    \begin{equation}
%        \left(\M^*\right)^N g=\sum_{\substack{j_0,j_1,...,j_N\in\nn \\ \sum_{r}j_r=N}} c_{\vec{j}} h_{\vec{j}}, 
%    \end{equation}
%    where $c_{\vec{j}}$ are constants depending only on $N$, and
%    \begin{equation}
%        h_{\vec{j}}=A_{j_0}\prod_{r=1}^N B_{j_r},
%    \end{equation}
%    where 
%    \begin{align*}
%        A_j&=\frac{\partial_{x}^j g}{(\partial_{x}\phi)^j},  & j=0,...,N \\
%        B_j&=\frac{\partial_{x}^{j+1}\phi}{(\partial_{x}[\ophi(x,y)-\ophi(w,y)])^{j+1}}, & j=0,...,N.
%    \end{align*}
%    By our assumptions it is easy to see that 
%    \begin{align*}
%        |A_j|&\le \frac{D^j}{E^j} \\
%        |B_j|&\le C_j \frac{D^{j}E}{E^{j+1}}=C \left(\frac{D}{E}\right)^j.
%    \end{align*}
%    Hence \eqref{IBPBound} holds.
%
%\end{proof}

We can apply Lemma \ref{IBPlemma} to the operator $\M_{y_1}$ using \eqref{lowerboundy1} and the assumption that $|\partial_{y_1}^j\sigma|\le C_j 2^{\ell j},$ which follows from the support of $\chimn_n(y_1)$. It suffices to check that $|\tilde{\tau}\cdot\partial_{y_1}^{j}[S(x_1,y_1)-S(w_1,y_1)]|\le C_j 2^{\ell(j-2)}|x_1-w_1|$ for $j\ge 2$. 
By a Taylor approximation and the fact that $\gamma(t)$ is $C^\infty$ (though for our purposes $C^5$ suffices) we have
\begin{align}
   \label{highyderiv} \tilde{\tau}\cdot\partial_{y_1}^{j}[S(x_1,y_1)-S(w_1,y_1)]= &\tilde{\tau}\cdot S_{x_1y_1^j}(w_1,y_1)(x_1-w_1)+|x_1-w_1|^2R_2(x_1,y_1), 
\end{align}
where $R_2(x_1,y_1)$ is smooth and uniformly bounded; this clearly satisfies the desired estimate. Thus we conclude
\[
    |\left(\M^*_{y_1}\right)^N(\sigma)|\lesssim_N \left(\frac{2^{2\ell}}{2^k|x_1-w_1|}\right)^N.
\]
We integrate by parts with the help of $\M_{y_1}^N$ to obtain
\begin{align*}
    \left|K_{m,n,\tilde{m},n}(x_1,w_1,\tilde{\tau})\right|%&=\left|\int e^{i2^k(\ophi(x,y)-\ophi(w,y))} \sigma \enskip dy\right| \\
    &\lesssim_N \int \left(\frac1{2^k2^{-2\ell}|x_1-w_1|}\right)^N |\chimn_{n}(y_1)\chimn_{m}(x_1)\chimn_{\tilde{m}}(w_1)| \, dy_1.
\end{align*}
Since $2^{k-2\ell}|x_1-w_1|\simeq|m-\tilde{m}|>0$, we have
\begin{align*}
   \left|K_{m,n,\tilde{m},n}(x_1,w_1,\tilde{\tau})\right|&\lesssim_N \int \left(\frac1{|m-\tilde{m}|}\right)^{N} |\chimn_{n}(y_1)\chimn_{m}(x_1)\chimn_{\tilde{m}}(w_1)| \, dy_1 \\
   &\lesssim_N 2^{-\ell}\left(\frac1{|m-\tilde{m}|}\right)^{N} \chimn_{m}(x_1)\chimn_{\tilde{m}}(w_1).
\end{align*}
Combining this with the above estimate for the case $m=\tilde{m}$ yields \eqref{HHstarkernel}.

To prove \eqref{HstarH} we note that the Schwartz kernel for $H^*_{m,n}H_{\tilde{m},\tilde{n}}$ is given by
\begin{equation*}
    \tilde{K}_{m,n,\tilde{m},\tilde{n}}(y_1,z_1,\tilde{\tau})=\int e^{i2^k\tilde{\tau}\cdot(S(x_1,y_1)-S(x_1,z_1))} \tilde{\sigma}(x_1,y_1,z_1,\tilde{\tau}) \, dx_1,
\end{equation*}
where 
\begin{align*}
    \tilde{\sigma}(x_1,y_1,z_1,\tilde{\tau})&= |\chi_1(|\tilde{\tau}|)\chi(x_1)|^2 \chi(y_1)a_{k,\ell}(y_1,2^k\tilde{\tau})\chimn_m(x_1)\chimn_n(y_1) \\
    & \qquad \times\overline{\chi}(z_1)\overline{a_{k,\ell}}(z_1,2^k\tilde{\tau})\overline{\chimn_{\tilde{m}}}(x_1)\overline{\chimn_{\tilde{n}}}(z_1).
\end{align*}
Then as in \eqref{disjoint1} the kernel vanishes if $|m-\tilde{m}|>2\C$. We prove the rest of \eqref{HstarH}, or equivalently that the kernel $\tilde{K}_{m,n,\tilde{m},\tilde{n}}$ vanishes under the assumptions $m=\tilde{m}$ and $|n-\tilde{n}|>2\C$. Recall that $|n-\tilde{n}|>2\C$ implies that $|y_1-z_1|>C \C 2^{-\ell}$. If $\ell<5$ and we take $\tilde{C}>C^{-1}2^5$ then this implies the kernel is zero. Assuming $\ell\ge 5$, we see since $y_1\in \supp\, a_{k,\ell}(y_1,2^k\tilde{\tau})$ and $z_1\in\supp\, a_{k,\ell}(z_1,2^k\tilde{\tau})$ that
\[
    |\tilde{\tau}_3||y_1-z_1|\le \big(|\tilde{\tau}_2-y_1\tilde{\tau}_3|+|\tilde{\tau}_2-z_1\tilde{\tau}_3|\big)\le C2^{-\ell},
\]
Thus if we can prove a lower bound on $|\tilde{\tau}_3|$ and take $\C$ large enough (again, independently of $k$ and $\ell$) then these supports are disjoint and the kernel is 0. Indeed, since $|y_1|\le 1$, $|\tilde{\tau}_2|+|\tilde{\tau}_3|\ge \tfrac12$, and $|\tilde{\tau}_2-y_1\tilde{\tau}_3|\le 2^{-\ell}$ this follows from the reverse triangle inequality as long as $\ell\ge 5$. Let $\iota(\tilde{\tau})=\sgn(\tilde{\tau}_2\tilde{\tau}_3)$ so that $|\tilde{\tau}_2+\iota \tilde{\tau}_3|=|\tilde{\tau}_2|+|\tilde{\tau}_3|$. Then we see that 
\[
    2^{-\ell+2}\ge |\tilde{\tau}_2-y_1\tilde{\tau}_3|=|\tilde{\tau}_2+\iota \tilde{\tau}_3-(\iota+y_1)\tilde{\tau}_3|\ge \Big| |\tilde{\tau}_2|+|\tilde{\tau}_3|-|\iota+y_1| |\tilde{\tau}_3|\Big|.
\]
This implies that 
\[
|\tilde{\tau}_3|\ge \tfrac{|\tilde{\tau}_2|+|\tilde{\tau}_3|-2^{-\ell+2}}{|\iota+y_1|}\ge 2^{-3}-2^{-\ell+1}>2^{-4}>0.
\]

\end{proof}

\section{Decoupling for the Cone}\label{DecouplingSection}

%Relevant Info to keep track of:
%\begin{align*}
%\Psi(x,y,\tau)&=\tau\cdot \left(\mx{x_2-y_2-\gamma_2(x_1-y_1) +P(x_1) \\ x_3-y_3+y_1\gamma_2(x_1-y_1) +\Gamma(x_1-y_1) +Q(x_1)}\right) \\
%\nabla_x\Psi&=\left(\mx{\tau_2(P'(x_1)-\gamma_2'(x_1-y_1))+\tau_3(y_1\gamma_2'(x_1-y_1)+\gamma_2(x_1-y_1)+Q'(x_1)) \\ \tau_2 \\ \tau_3}\right) \\
%VT&= \left(\mx{ a P'(a)+\gamma_2(a-\nu)+Q'(a) \\ \nu \\ 1}\right) \\
%HT&=\left(\mx{P'(a)-\gamma_2'(a-\nu) \\ 1 \\ 0}\right) \\
%N&=\left(\mx{1 \\ \gamma_2'(a-\nu)-P'(a) \\ -\nu\gamma_2'(a-\nu)-\gamma_2(a-\nu)-Q'(a)}\right) \\
%\langle HT-e_3\langle HT,VT\rangle,\nabla_x\Psi\rangle &=-((\nu(P'(a) - \gamma_2'(a - \nu)) + \nu\gamma_2'(a - \nu) + \gamma_2(a - \nu) + Q'(a))(P'(a) - \gamma_2'(a - \nu)) + \nu)\tau_3 \\
%& + ((y_1\gamma_2'(x_1 - y_1) + g(x_1 - y_1) + Q'(x_1))\tau_3 + \tau_2(P'(x_1) - \gamma_2'(x_1 - y_1)))(P'(a) - \gamma_2'(a - \nu)) + \tau_2 \\
%\langle N,\nabla_x\Psi\rangle &=-(\nu\gamma_2'(a - \nu) + \gamma_2(a - \nu) + Q'(a))\tau_3 + (y\gamma_2'(x_1 - y_1) + \gamma_2(x_1 - y_1) + Q'(x_1))\tau_3 - \tau_2(P'(a) - \gamma_2'(a - \nu)) + \tau_2(P'(x_1) - \gamma_2'(x_1 - y_1)) \\
%\end{align*}

The idea of the proof of Proposition \ref{decoupling} is to decouple along the fibers of the singular variety $\mathcal{L}^{\tilde{\Phi}}$, which as we have seen in Section \ref{conormalbundle} are curved cones varying with the base point. For background on this approach, see \cite{GrSe94,AnClPrSe2018,PrSe19}. 

Let us consider the moment curve $\gamma(t)=(t,t^2,\tfrac16 t^3)$ from Section \ref{modelcase} as an example. We have
\[
\A_{k,\ell}f(x)=\chi(x_1)\iint e^{i\tau\cdot (x'-y'+S(x_1,y_1))} \chi(y_1)\chi_k(|\tau|)a_{k,\ell,\pm}(y_1,\tau) f(y) \, dy \, d\tau,
\]
where from \eqref{Sdefn} 
\begin{align*}
S^1(x_1,y_1)&=-(x_1-y_1)^2=-x_1^2+2x_1y_1-y_1^2 \\
S^2(x_1,y_1)&=y_1(x_1-y_1)^2+\tfrac13(x_1-y_1)^3=\tfrac13x_1^3+\tfrac23y_1^3-x_1y_1^2.
\end{align*}
From \eqref{Sigmax} the cone $\Sigma_x$ associated to the moment curve is given by 
\[
\Big\{\rho\Big(\mx{(x_1-y_1)^2 \\ y_1 \\ 1}\Big) \ : \ \rho\in\rr, y_1\in\supp \chi\Big\},
\]
Through a nonlinear change of variables $\tilde{x}'=x'+\Big(\mx{-x_1^2 \\ \tfrac13x_1^3}\Big)$ and $\tilde{y}'=y'+\Big(\mx{y_1^2 \\ \tfrac23y_1^3}\Big)$, we are able to freeze the cone $\Sigma_x$. Observe $\A_{k,\ell}$ transforms into
\[
\A_{k,\ell}f(\tilde{x})=\chi(x_1)\iint e^{i(\tau_2(\tilde{x}_2-\tilde{y}_2+2x_1y_1)+\tau_3(\tilde{x}_3-\tilde{y}_3-x_1y_1^2))} \chi_k(|\tau|)a_{k,\ell}(y_1,\tau)f(y) \, d\tilde{y} \, d\tau. 
\]
After these changes of variables, the fibers of the singular variety are given by 
\[
\tilde{\Sigma}_x=\{\xi\in\rr^3 \ : \ \xi=\rho(y_1^2,y,1), \rho\in \rr \},
\]
which no longer varies with $x$ as in the case in Proposition 3.1 of \cite{PrSe06}. There it was shown that the Fourier transform of the associated operator $\A_{k,\ell}$ is essentially supported in a $2^{k-2\ell}$ neighborhood of $\Sigma_x$. Thus we can apply decoupling down to plates adapted to that thin neighborhood. 

In general it cannot be hoped that we can apply just one nonlinear change of variables to fix $\Sigma_x$. However, if we cut up the support in $x_1$ into small intervals, somewhat ``freezing'' the variation of the cone $\Sigma_x$ with $x_1$, we can apply decoupling to each decomposed piece separately. We cannot expect to be able to decouple down to the $2^{k-2\ell}$ scale immediately because of the variation in $\Sigma_x$ even over this smaller interval of $x_1$, but by decoupling many times, changing variables at each step to further ``freeze'' the cone $\Sigma_x$ depending on the decoupling step, we can recover the same estimate as in the model case above, with a large constant depending on $\varepsilon$.

\subsection{The Decoupling Step}

Given $a,b\in\rr$ and $\delta_0,\varepsilon_1>0$, define the operator
\[
Tf(x)=2^{2k}\iint e^{i2^k\tau\cdot\tilde{\Phi}(x,y)} \sigma_{a,b}(x_1,y_1,\tau) f(y) \, d\tau  \, dy,
\]
where 
\[
\sigma_{a,b}(x_1,y_1,\tau)=\chi_1(|\tau|)a_{k,\ell}(y_1,2^k\tau)\chi(\varepsilon_1^{-1}|x_1-a|)\chi(\delta_0^{-1}|y_1-b|).
\]
Let $I=\supp\, \chi(\delta_0^{-1}|y_1-b|)$ be an interval of length $\delta_0$ containing $b$. Let $\{\nu\}$ be a set of $\delta_1$-separated points in $I$, and let $I_{\nu}$ be disjoint intervals containing $\nu$ so that $|I_{\nu}|=\delta_1$ and $I=\cup_{\nu} I_{\nu}$. Let $f_{\nu}(y)=f(y)\1_{I_\nu}(y_1)$. We first prove the following inductive step.

\begin{prop}\label{decouplingstep}
Let $2\le p\le 6$, $0<\varepsilon\le 1$, $k\gg 1$, $\ell\le k/2$, $2^{-\ell\varepsilon}>\delta_0> 2^{-\ell(1-\varepsilon)}$, and $\delta_1<\delta_0$ be given such that $\delta_1\ge \max\{2^{-\ell(1-\varepsilon/2)},\delta_0 2^{-\ell\varepsilon/4}\}$. Define $\varepsilon_1=(\delta_1/\delta_0)^2$ and fix $a,b\in\rr$. Then we have, for any $\varepsilon'\in(0,\varepsilon)$ and $N\in\nn$,
\[
\Big\|T\Big[\sum_{\nu} f_{\nu}\Big]\Big\|_{p}\lesssim_{\varepsilon'}(\delta_0/\delta_1)^{1/2-1/p+\varepsilon'}\left(\sum_{\nu} \Big\|T f_{\nu}\Big\|_{p}^p\right)^{1/p}+C(\varepsilon,N)2^{-kN}\sup_{\nu}\|f_{\nu}\|_p.
\]
\end{prop}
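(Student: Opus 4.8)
The plan is to reduce the estimate, after a chain of volume-preserving changes of variables and an anisotropic rescaling, to a single application of the Bourgain--Demeter $\ell^p$ decoupling inequality for the cone at scale $\delta_1/\delta_0$. The curvature hypothesis needed for the latter is available: by \eqref{Sigmax} the relevant cone has one non-vanishing principal curvature because $\gamma_2''\ne0$, so for $2\le p\le6$ a decomposition into $\sim\delta_0/\delta_1$ plates has $\ell^pL^p$ decoupling constant $\lesssim_{\varepsilon'}(\delta_0/\delta_1)^{1/2-1/p+\varepsilon'}$, which is exactly the gain claimed. All the real work is in arranging that $T\big[\sum_\nu f_\nu\big]$ agrees, up to a rapidly decaying error, with $\sum_\nu g_\nu$, where each $\widehat{g_\nu}$ is supported in a plate of one \emph{fixed} cone.

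\emph{Step 1: normalizing the phase.} Let $a,b$ be the localization centers of $\sigma_{a,b}$ and split
\[
S(x_1,y_1)=S(x_1,b)+\big(S(a,y_1)-S(a,b)\big)+\Psi(x_1,y_1),\qquad \Psi:=S(x_1,y_1)-S(x_1,b)-S(a,y_1)+S(a,b).
\]
Then $\Psi$ vanishes on $\{x_1=a\}\cup\{y_1=b\}$; in particular $\Psi_{x_1}$ vanishes for $y_1=b$, so $\Psi_{x_1}=O(|y_1-b|)=O(\delta_0)$ on the support, while $\Psi_{x_1y_1}(a,b)=\gamma_2''(a-b)(1,-b)\ne0$. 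The pure-$x_1$ and pure-$y_1$ summands are removed by the lower-triangular, Jacobian-one substitutions $x'\mapsto x'-S(x_1,b)$ and $y'\mapsto y'+S(a,y_1)-S(a,b)$, which commute with the $\tau$-cutoffs, preserve $\|\cdot\|_p$ and preserve the localizations of the $f_\nu$ in $y_1$; this replaces $\tilde\Phi(x,y)$ by $x'-y'+\Psi(x_1,y_1)$. A computation from \eqref{Sdefn} plus Taylor expansion in $y_1$ about $b$ and in $x_1$ about $a$ then yields, on $\supp a_{k,\ell}$ (where $\tau_2-y_1\tau_3=O(2^{-\ell})$ and $|\tau|\simeq1$),
\[
\tau\cdot\Psi_{x_1}(x_1,y_1)=\tfrac12\gamma_2''(a-b)(y_1-b)^2\tau_3+O(2^{-\ell}\delta_0)+O(\varepsilon_1\delta_0^2)+O(\delta_0^3),
\]
the $O(\varepsilon_1\delta_0^2)$ term (from $|x_1-a|\le\varepsilon_1$) being the only one that depends on $x_1$. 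This is the analogue of the model-case reduction at the start of Section~\ref{DecouplingSection}.

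\emph{Step 2: rescaling, frequency localization, decoupling.} Rescale $x_1=a+\varepsilon_1\bar x_1$, $y_1=b+\delta_0\bar y_1$ and dilate $x',y',\tau$ accordingly, so that the $I_\nu$ become $(\delta_1/\delta_0)$-separated intervals. In the full Fourier transform of $Tf_\nu$ in $x$, the $x'$-integral forces $\xi'=2^k\tau$, and integration by parts in $x_1$ (using the smoothness of $\sigma_{a,b}$ at scale $\varepsilon_1$ and $|\tau|\simeq1$, as in Section~\ref{L2section}) shows $\widehat{Tf_\nu}(\xi)=O_N\big(2^{-kN}\|f_\nu\|_p\big)$ unless $\xi_1$ lies in the range of $2^k\,\tau\cdot\Psi_{x_1}$ over $y_1\in I_\nu$. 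By Step~1 this range, after rescaling, lies in a plate $\Theta_\nu$ --- of angular width $\delta_1/\delta_0$ and curved-direction size $\asymp(\delta_1/\delta_0)^2$ --- of a fixed truncated cone of non-vanishing curvature (coming from $\gamma_2''(a-b)\ne0$): the hypotheses $2^{-\ell\varepsilon}>\delta_0>2^{-\ell(1-\varepsilon)}$ and $\delta_1\ge\max\{2^{-\ell(1-\varepsilon/2)},\delta_0 2^{-\ell\varepsilon/4}\}$ give $\delta_1^2\ge 2^{-\ell}\delta_0$, while $\varepsilon_1=(\delta_1/\delta_0)^2$ gives $\varepsilon_1\delta_0^2=\delta_1^2$, so the $O(2^{-\ell}\delta_0)$ and $O(\varepsilon_1\delta_0^2)$ errors are absorbed into $\Theta_\nu$ and the $O(\delta_0^3)$ error is merely a fixed correction to the cone. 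Subtracting a smooth tail $r_\nu$ with $\|r_\nu\|_p\lesssim C(\varepsilon,N)2^{-kN}\|f_\nu\|_p$, the $Tf_\nu-r_\nu$ have honest Fourier support in $\Theta_\nu$; applying cone decoupling to $\sum_\nu(Tf_\nu-r_\nu)$ and undoing the rescaling and Step~1 substitutions (all volume-preserving) gives the main term. The $r_\nu$ are summed trivially: there are $\lesssim\delta_0/\delta_1\lesssim2^{\ell\varepsilon}$ indices $\nu$ and $\ell\le k/2$, so after relabelling $N$ their total is $\lesssim C(\varepsilon,N)2^{-kN}\sup_\nu\|f_\nu\|_p$.

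\emph{Main obstacle.} The substantive step is the frequency localization in Step~2: showing rigorously that $e^{i2^k\tau\cdot\Psi(x_1,y_1)}$ confines $\widehat{Tf_\nu}$ to an $x$-independent plate of the correct dimensions. This needs a careful non-stationary-phase estimate in $x_1$ with $\sigma_{a,b}$-dependent remainders, the delicate bookkeeping --- through $\varepsilon_1=(\delta_1/\delta_0)^2$ and the stated ranges of $\delta_0,\delta_1$ --- that each error term is dominated either by a dimension of $\Theta_\nu$ or by $2^{-kN}$, and a check that $\{\Theta_\nu\}$ genuinely tiles a $(\delta_1/\delta_0)$-neighborhood of a curved cone so that Bourgain--Demeter applies verbatim. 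Once this is in place the decoupling input and the summation are routine, and Proposition~\ref{decouplingstep} is then iterated (decreasing $\delta_0$ geometrically towards a power of $2^{-\ell}$) to prove Proposition~\ref{decoupling}.
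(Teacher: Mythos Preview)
Your proposal is correct and follows essentially the same route as the paper: the change of variables $x'\mapsto x'-S(x_1,b)$, a demonstration that each $Tf_\nu$ has Fourier transform concentrated on a plate of a fixed cone up to a rapidly decaying error, and then a single application of Bourgain--Demeter cone decoupling at scale $\delta_1/\delta_0$. The paper organizes the middle step by introducing explicit Fourier projectors $P^a_{k,\nu}$ onto plates $\Pi_{a,\nu}(\delta_1)$ and bounding the kernel of $(Id-P^a_{k,\nu})T$ via integration by parts in $x$ along the tangent and normal directions $u_2(a,\nu),\,u_3(a,\nu)$ of $\Sigma^{\Phi_b}_x$ --- Lemma~\ref{plateestimates} is exactly your ``main obstacle,'' and the tangential localization you need comes for free from the amplitude $a_{k,\ell}$ once $\xi'=2^k\tau$; your extra $y'$-substitution, the anisotropic rescaling, and Taylor-expanding about $y_1=b$ (treating the $O(\delta_0^3)$ term as a smooth perturbation of the reference cone) rather than about $y_1=\nu$ are cosmetic variants of the same argument.
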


Since $y_1$ lies within $\delta_0$ of $b$ we change variables $x'\mapsto x'-S(x_1,b)$. Note that this change of variables is a diffeomorphism and the determinant of the Jacobian is 1. Under this change of variables our phase function for the operator $T$ becomes
\[
\Phi_b(x,y)=x'-y'+(S(x_1,y_1)-S(x_1,b))
\]
and the fibers $\Sigma_x^{\Phi_b}$ become 
\[
\Sigma_{x}^{\Phi_b}:=\left\{\rho\left(\mx{\gamma_2(x_1-y_1)-\gamma_2(x_1-b)+(y_1-b)\gamma_2'(x_1-b) \\ y_1 \\ 1}\right) \, : \, \rho\in\rr, |y_1-b|\le \delta_0 \right\}.
\]
A basis for the tangent space of $\Sigma_x^{\Phi_b}$ is given by the radial and nonradial vectors
\begin{align*}
u_1(x_1,y_1)&=\left(\mx{\gamma_2(x_1-y_1)-\gamma_2(x_1-b)+(y_1-b)\gamma_2'(x_1-b) \\ y_1 \\ 1}\right) \\
\tilde{u}_2(x_1,y_1)&=\left(\mx{\gamma_2'(x_1-b)-\gamma_2'(x_1-y_1) \\ 1 \\ 0}\right),
\end{align*}
and the normal vector is given by
\begin{align*}
u_3(x_1,y_1)&:=\tilde{u}_2(x_1,y_1)\times u_1(x_1,y_1) \\
%&=\left(\mx{1 \\ \gamma_2'(x_1-y_1)-\gamma_2'(x_1-b) \\ b\gamma_2'(x_1-b)+\gamma_2(x_1-b)-(y_1\gamma_2'(x_1-y_1)+\gamma_2(x_1-y_1))}\right) \\
&=\left(\mx{1 \\ S_{x_1}(x_1,b)-S_{x_1}(x_1,y_1)}\right).
\end{align*}
Define $u_2(a,\nu):=\tilde{u}_2(a,\nu)-e_3 \langle \tilde{u}_2(a,\nu),u_1(a,\nu)\rangle$ so that $\{u_i\}_{i=1}^2$ defines an orthogonal basis for the tangent space of $\Sigma_x^{\Phi_b}$ and let $\Pi_{a,\nu}(\delta_1)$ be the plate adapted to the cone $\Sigma_x^{\Phi_b}$ at the point $(a,\nu)$, defined by the inequalities
\begin{align}
& B^{-1}\le |\xi| \le B \label{verplatecomp} \\
& |\left\langle u_2(a,\nu), \xi\right\rangle|\le B\delta_1 \label{horplatecomp}\\
& |\langle u_3(a,\nu),\xi\rangle|\le B\delta_1^2. \label{normplatecomp}
\end{align}

Let $\chi_{\nu}$ be a smooth bump function equal to $1$ on $\Pi_{a,\nu}(\delta_1)$ and supported on its double. Let $P_{k,\nu}^a$ be the Fourier projection operator onto $2^k\Pi_{a,\nu}(\delta_1)$, defined by
\[
\reallywidehat{P_{k,\nu}^a f}(\xi):=\chi_{\nu}(2^{-k}\xi)\hat{f}(\xi).
\]
By the Bourgain-Demeter decoupling inequality on the cone \cite{BoDe15}, H\"older's inequality, and an application of our change of variables again, we have
\begin{equation}\label{decouplingplate}
\Big\|\sum_{\nu} P_{k,\nu}^a Tf_{\nu}\Big\|_{p}\lesssim_\varepsilon(\delta_0/\delta_1)^{1/2-1/p+\varepsilon}\left(\sum_{\nu} \|T f_{\nu}\|_{p}^p\right)^{1/p},
\end{equation}
for $2\le p\le 6$. See \cite{PrSe07} for an argument adapting this result (actually an older decoupling estimate for large $p$ proven by Wolff in \cite{Wo00}) to a general conic surface with one non-vanishing principal curvature. To finish the proof of Proposition \ref{decouplingstep} we need to show that the $L^p$ norm of $(Id-P_{k,\nu}^a)T f_\nu$ is ``negligible.'' We follow the argument found in \S 4.1 of \cite{AnClPrSe2018}. We write the Schwartz kernel of the map $f\mapsto (Id-P_{k,\nu}^a)Tf$ as $\sum_{n=0}^\infty K_{n,k,\ell}$, where
\begin{align}
\label{platekernel} K_{n,k,\ell}(\tilde{x},y)&=  2^{2k} \iint e^{i\langle \tilde{x}-x,\xi\rangle} (1-\chi_{\nu}(2^{-k}\xi))\chi_n(|\xi|) \int e^{i2^k\tau\cdot\Phi_b(x,y)} \sigma_{a,b}(x_1,y_1,\tau) \, d\tau \,dx \,d\xi. 
 \\
&= 2^{2k} \iiint e^{i\Psi(x,\tilde{x},y,\tau,\xi)} \tilde{\sigma}_{a,b}(x_1,y_1,\tau,\xi) \, d\tau \, dx\, d\xi. \notag
\end{align}
We will use three integration by parts estimates to show that $K_{n,k,\ell}$ decays rapidly in $k$ and in $n$. Note that $\tilde{\sigma}_{a,b}$ is supported where $|\xi|\simeq 2^n$ for $n\ge 1$ and $|\xi|\le 1$ for $n=0$, and in the complement of the set $2\Pi_{a,\nu}(\delta_1)$, with $|\tau|\simeq 1$, and with $x_1$ and $y_1$ in a compact set. Further, since $f$ is compactly supported in $y$, $x$ is also restricted to a compact set. 

First, note that $\nabla_{\xi}\Psi=\tilde{x}-x$ and for any $\xi$-derivative $\partial_{\xi}^j$ of order $j$, $| \partial_\xi^j\tilde{\sigma}_{a,b}|\le C_j \min\{2^n,2^k\delta_1^2\}^{-j}$, so by Lemma \ref{IBPlemma}, we see that
\begin{equation} \label{xiibp}
\Big|\int e^{i\Psi(x,\tilde{x},y,\tau,\xi)} \tilde{\sigma}_{a,b}(x_1,y_1,\tau,\xi) \, d\xi\Big|\le C_N \frac{2^n}{(1+|x-\tilde{x}|)^N}.
\end{equation}
We will use this estimate to allow us to integrate in $x$ in the next two integration by parts estimates. 

Assume that $|k-n|>C$ for some positive constant $C>1$. Then we integrate by parts in the $x$-variables. Note that since $\nabla_x\Phi^i_b$ are linearly independent for $i=1,2$ and roughly size 1, we have the lower bound
\[
|\nabla_x\Psi|=|-\xi+2^k(\tau\cdot \Phi_b(x,y))_x|\ge C\max\{2^k,2^n\}
\]
On the other hand, higher $x$-derivatives of $\Psi$ are bounded by $C_j 2^k$, and higher $x$-derivatives of $\tilde{\sigma}_{a,b}$ are bounded by $C_j 2^{\ell j}$. Since $\ell\le k/2$, integration by parts in the $x$-variables, followed by the $\xi$-variables, by two applications of Lemma \ref{IBPlemma} yields
\[
|K_{n,k,\ell}(\tilde{x},y,\xi)|\le C_N \iiint \frac{1}{(\max\{2^k,2^n\})^N(1+|x-\tilde{x}|)^N} \, dx \, d\tau \, d\xi,
\]
which decays rapidly in $n$ for $n>k$, and rapidly in $k$ for $n<k$. Thus we have only to show that similar estimates hold when $n\simeq k$.

Assume that $|n-k|\le C$. We will first integrate by parts in the $x$-variables using the fact that $\xi$ lie on the complement of the plates $\Pi_{a,\nu}(\delta_1)$, then our initial estimate integrating by parts in the $\xi$-variables. To use integration by parts estimates on the complements of the plates $\Pi_{a,\nu}(\delta_1)$ we first formulate upper bounds on certain directional derivatives of $\tau\cdot\Phi_b$ which will become lower bounds for derivatives of $\Psi$. Notice that our assumption $|n-k|\le C$ above ensures that \eqref{verplatecomp} holds, hence we only need to consider $\xi$ for which either \eqref{horplatecomp} or \eqref{normplatecomp} do not hold.

\begin{lem}\label{plateestimates}
There is a constant $A\ge 1$ so that for all $|y_1-\nu|\le \delta_1$,
\begin{align}
|\langle u_2(a,\nu),\nabla_x (\tau\cdot\Phi_b)\rangle|&\le A\delta_1 \label{tangent} \\
|\langle u_3(a,\nu),\nabla_x (\tau\cdot\Phi_b)\rangle|&\le A\delta_1^2. \label{normal}
\end{align}
\end{lem}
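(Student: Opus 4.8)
The plan is to prove Lemma \ref{plateestimates} by a direct Taylor expansion of the directional derivatives $\langle u_2(a,\nu),\nabla_x(\tau\cdot\Phi_b)\rangle$ and $\langle u_3(a,\nu),\nabla_x(\tau\cdot\Phi_b)\rangle$ around the point $(x_1,y_1)=(a,\nu)$, exploiting the degeneracy structure encoded in \eqref{newL}--\eqref{newblowdown}. Since $\Phi_b(x,y)=x'-y'+(S(x_1,y_1)-S(x_1,b))$, we have $\nabla_{x'}(\tau\cdot\Phi_b)=\tau$ and $\partial_{x_1}(\tau\cdot\Phi_b)=\tau\cdot(S_{x_1}(x_1,y_1)-S_{x_1}(x_1,b))$, so both directional derivatives are smooth functions of $(x_1,y_1,\tau)$. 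On the support of the amplitude we have $|x_1-a|\lesssim\varepsilon_1=(\delta_1/\delta_0)^2$, $|y_1-\nu|\le\delta_1$, and (from $a_{k,\ell}$) the key fact that $|\tau_2-y_1\tau_3|\lesssim 2^{-\ell}\lesssim\delta_1^2$ after the $\tau=2^k\tilde\tau$ rescaling. The vectors $u_2(a,\nu)$ and $u_3(a,\nu)$ are, up to $O(1)$ corrections, the tangent and normal directions to the cone $\Sigma_x^{\Phi_b}$ at $(a,\nu)$; by construction $u_3(a,\nu)=(\ ;1;S_{x_1}(a,b)-S_{x_1}(a,\nu))$ is orthogonal to $\Sigma_a^{\Phi_b}$, which is precisely the statement that the $\xi$-vector on $\mathcal{L}$ (which is $\rho u_1$) pairs to zero against $u_2$ and $u_3$.

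First I would compute $\langle u_2(a,\nu),\nabla_x(\tau\cdot\Phi_b)\rangle$ and $\langle u_3(a,\nu),\nabla_x(\tau\cdot\Phi_b)\rangle$ explicitly, writing $\tau=\rho(\nu,1)+w$ where $w$ is the component with $|w|\lesssim\delta_1^2$ coming from the $a_{k,\ell}$ cutoff (i.e. the part measuring distance to the fiber direction). Plugging $\tau=\rho(\nu,1)$ exactly, and $(x_1,y_1)=(a,\nu)$ exactly, both pairings vanish identically: this is the content of \eqref{newL} for the radial piece and of the blowdown relation $(V_R^{\tilde\Phi})^j\det d\pi_R=0$ for all the higher corrections, together with the definition of $u_2,u_3$ as (projections of) tangent/normal vectors. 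So the entire expression is a sum of error terms, each carrying at least one factor of $(x_1-a)$, $(y_1-\nu)$, or $w$. Then I would count powers: for \eqref{tangent} I need to show every surviving term is $O(\delta_1)$, and for \eqref{normal} that every surviving term is $O(\delta_1^2)$. The pairing with $u_3$ should be better because $u_3$ is the \emph{normal} direction, so the linear-in-$(y_1-\nu)$ term vanishes too (the cone is tangent to its tangent plane to second order), leaving $(y_1-\nu)^2\sim\delta_1^2$, while $(x_1-a)\sim\varepsilon_1=\delta_1^2/\delta_0^2\le\delta_1^2\cdot 2^{\ell\varepsilon/2}$ — here I must be careful and track that $\varepsilon_1\le\delta_1$ (which holds since $\delta_1\ge\delta_0 2^{-\ell\varepsilon/4}$ gives $\varepsilon_1=(\delta_1/\delta_0)^2\le\delta_1\cdot(\delta_1/\delta_0^2)$ and $\delta_1/\delta_0^2\le 2^{\ell\varepsilon}/\delta_0\cdot\ldots$); more simply $\varepsilon_1\le\delta_1$ since $\delta_1<\delta_0$ forces $(\delta_1/\delta_0)^2<\delta_1/\delta_0<1$, and comparing with $\delta_1$ directly, $\varepsilon_1/\delta_1=\delta_1/\delta_0^2$. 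I would verify $\delta_1/\delta_0^2\le 1$ using $\delta_1<\delta_0<2^{-\ell\varepsilon}<1$, so $\delta_1/\delta_0^2<1/\delta_0$, which is not automatically $\le 1$ — so the honest bound is $\varepsilon_1\lesssim\delta_1^2\cdot 2^{\ell\varepsilon}$ and one must check that the $w\sim 2^{-\ell}$ factors combine favorably. This bookkeeping is the crux.

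The main obstacle, then, is the \textbf{exponent accounting}: making sure that after the Taylor expansion, each error term really is bounded by $A\delta_1$ (resp. $A\delta_1^2$) uniformly in $k,\ell$, given the precise interlocking constraints $\delta_0>2^{-\ell(1-\varepsilon)}$, $\delta_1\ge\max\{2^{-\ell(1-\varepsilon/2)},\delta_0 2^{-\ell\varepsilon/4}\}$, $\varepsilon_1=(\delta_1/\delta_0)^2$, and $|\tau_2-y_1\tau_3|\lesssim 2^{-\ell}$. The worst terms will be those mixing an $(x_1-a)$ factor (size $\varepsilon_1$) against a $\tau$-component of size $O(1)$ — I expect $\varepsilon_1\cdot O(1)\lesssim\delta_1$ to follow from $\varepsilon_1=(\delta_1/\delta_0)^2\le\delta_1$ whenever $\delta_1\le\delta_0$, which does hold — and terms with a bare $w$ factor (size $2^{-\ell}\le\delta_1^2$), which is harmless. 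I would organize the proof by first recording the derivative identities $\partial_{x_1}^j\partial_{y_1}[\tau\cdot S]=\gamma_2^{(j+1)}(x_1-y_1)(\tau_2-y_1\tau_3)$ from \eqref{Sxjy}, then Taylor-expanding $S_{x_1}(x_1,y_1)-S_{x_1}(x_1,b)$ in $y_1$ around $\nu$ and in $x_1$ around $a$, substituting into the two pairings, and finally invoking the vanishing of the constant term (blowdown/fold geometry) to peel off one order, with the $u_3$-pairing peeling off a second order by normality. Proposition 3.1 of \cite{PrSe06} and Section 4.1 of \cite{AnClPrSe2018} furnish the template; the new feature is only that $\Sigma_x^{\Phi_b}$ still depends on $x_1$ (over the short interval $|x_1-a|\le\varepsilon_1$), so I must check the $x_1$-variation contributes only admissible errors, which is exactly why $\varepsilon_1$ was chosen as small as $(\delta_1/\delta_0)^2$.
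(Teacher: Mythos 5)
Your overall strategy—Taylor expansion of both pairings about $(a,\nu)$, exploiting the degeneracy identities to kill the constant term, then counting powers against the constraints $|x_1-a|\lesssim\varepsilon_1$, $|y_1-\nu|\le\delta_1$, $|\tau_2-y_1\tau_3|\lesssim 2^{-\ell}$—is exactly the paper's approach. But you correctly identify the crux (the exponent accounting for the $(x_1-a)$-terms) and then fail to resolve it, and a couple of the intermediate claims you lean on are false.

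The load-bearing gap is this. You write that the worst terms are ``$(x_1-a)$ factor (size $\varepsilon_1$) against a $\tau$-component of size $O(1)$,'' and you need $\varepsilon_1\lesssim\delta_1^2$ for \eqref{normal}. But $\varepsilon_1=(\delta_1/\delta_0)^2$ and $\varepsilon_1/\delta_1^2=\delta_0^{-2}$, which is \emph{large}, not $\lesssim 1$. You notice this (``which is not automatically $\le 1$'') and leave it at ``one must check the $w$ factors combine favorably,'' which is precisely the step the lemma is about. The paper's resolution is that the coefficient of $(x_1-a)^{j-1}$ is \emph{not} $O(1)$: it is $\tau\cdot\big[S_{x_1^j}(a,\nu)-S_{x_1^j}(a,b)\big]$, and Taylor-expanding this in the second slot about $b$ gives a leading term $(\nu-b)\,\tau\cdot S_{x_1^jy_1}(a,b)=(\nu-b)\gamma_2^{(j+1)}(a-b)(\tau_2-b\tau_3)$. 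On the support of $a_{k,\ell}$ one has $|\tau_2-b\tau_3|\lesssim 2^{-\ell}+\delta_0\lesssim\delta_0$, so the whole coefficient is $O(\delta_0^2)$ (this is the paper's \eqref{taylorSxj}). Hence the dangerous contribution is $\varepsilon_1\cdot\delta_0^2=(\delta_1/\delta_0)^2\delta_0^2=\delta_1^2$, exactly as needed, and this arithmetic is the entire reason $\varepsilon_1$ is chosen to be $(\delta_1/\delta_0)^2$ in the first place. Your proposal never states this cancellation, so as written it does not close.

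Two of your intermediate estimates are also wrong and could mislead the bookkeeping. First, $2^{-\ell}\lesssim\delta_1^2$ is false: from $\delta_1\ge 2^{-\ell(1-\varepsilon/2)}$ you only get $\delta_1^2\ge 2^{-2\ell+\ell\varepsilon}\le 2^{-\ell}$ for $\varepsilon<1$; the correct statement is $2^{-\ell}\lesssim\delta_1$. Correspondingly, in your decomposition $\tau=\rho(\nu,1)+w$ the off-radial component $w$ is only $O(\delta_1)$, not $O(\delta_1^2)$. Second, the linear-in-$(y_1-\nu)$ term in the $u_3$ pairing does \emph{not} vanish by tangency; it equals $(y_1-\nu)\,\tau\cdot S_{x_1y_1}(a,\nu)=(y_1-\nu)\gamma_2''(a-\nu)(\tau_2-\nu\tau_3)$, which is nonzero because $\tau$ is not on the cone. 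It is nonetheless acceptable because both factors are $O(\delta_1)$, giving $O(\delta_1^2)$ as required—so the conclusion you want is right, but the mechanism you cite (second-order tangency of the cone) is not the reason.
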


\begin{proof}

We begin with the more delicate inequality, \eqref{normal}. Taking a Taylor expansion of the left hand side about $(x_1,y_1)=(a,\nu)$ we see that for sufficiently large $K\lesssim 1/\varepsilon$ we have
\begin{align}
\label{normaltaylorexp} \langle u_3(a,\nu),\nabla_x(\tau\cdot\Phi_b)\rangle &=  \tau\cdot\left(S_{x_1}(x_1,y_1)-S_{x_1}(x_1,b)-S_{x_1}(a,\nu)+S_{x_1}(a,b)\right) \\
&= 0+ (y_1-\nu)\tau\cdot S_{x_1 y_1}(a,\nu) +\tfrac{1}2(y_1-\nu)^2\tau\cdot S_{x_1 y_1^2}(a,\nu) \notag\\
& \qquad   +\mathcal{I}+\mathcal{I}\mathcal{I}+\mathcal{I}\mathcal{I}\mathcal{I}+\delta_1^2R_3(x_1,y_1) \notag,
\end{align}
where 
\begin{align*}
\mathcal{I}&= \sum_{j=2}^{K+1} \tfrac{(x_1-a)^{j-1}}{(j-1)!}\tau\cdot[S_{x_1^j}(a,\nu)-S_{x_1^j}(a,b)] \\
\mathcal{I}\mathcal{I}&=\sum_{j=2}^{K} \tfrac{(x_1-a)^{j-1}(y_1-\nu)}{(j-1)!}\tau\cdot  S_{x_1^{j}y_1}(a,\nu)  \\
\mathcal{I}\mathcal{I}\mathcal{I}&=\sum_{j=2}^{K-1} \tfrac{(x_1-a)^{j-1}(y_1-\nu)^2}{2(j-1)!} \tau\cdot  S_{x_1^{j}y_1^2}(a,\nu),
\end{align*}
and $R_3$ is smooth and uniformly bounded. 
The first two nonzero terms of \eqref{normaltaylorexp} represent respectively the size of $\det d\pi_L^{\Phi_b}$ and $V_L^{\Phi_b}\det d\pi_L^{\Phi_b}$ near the singular variety $\mathcal{L}^{\Phi_b}$. From \eqref{Sxjy}, \eqref{Sxjy2}, and the fact that $|y_1-\nu|\le \delta_1$ we see that 
$\left|\tau\cdot S_{x_1 y_1}(a,\nu)\right|%&=\left|(\tau_2-\nu\tau_3)\gamma_2''(a-\nu)\right| \\
%&\le \left|[(\tau_2-y_1\tau_3)+(y_1-\nu)\tau_3]\gamma_2''(a-\nu)\right| \notag \\
\le  C\delta_1 %\notag
$
and
$
\left|\tau\cdot S_{x_1 y_1^2 }(a,\nu)\right|%=\left|(\tau_2-\nu\tau_3)\gamma_2^{(3)}(a-\nu)+\tau_3\gamma_2''(a-\nu)\right|
\simeq 1.
$
Note that in the model case of the moment curve (corresponding to $\gamma_2(t)=t^2$) the terms $\mathcal{I}$, $\mathcal{I}\mathcal{I}$, and $\mathcal{I}\mathcal{I}\mathcal{I}$ vanish identically, and the bound on the normal directional derivative is a simple geometric statement about the size of $\det d\pi_L^{\Phi_b}$ and the non-vanishing of $V_L\det d\pi_L^{\Phi_b}$. For more general curves we have to estimate $\mathcal{I}, \mathcal{I}\mathcal{I},$ and $\mathcal{I}\mathcal{I}\mathcal{I}$, which encode the behavior of higher order mixed kernel fields acting on the determinant. Thankfully, these are easy to estimate by our calculations in Section \ref{conormalbundle}.
 
By \eqref{Sxjy}, \eqref{Sxjy2}, and the fact that $|y_1-\nu|\le \delta_1$ we see that 
\begin{align*}
\mathcal{I}\mathcal{I}+\mathcal{I}\mathcal{I}\mathcal{I}\lesssim_K\varepsilon_1\delta_1^2.
\end{align*}

To analyze $\mathcal{I}$ we consider a Taylor expansion of $\tau\cdot\big[S_{x_1^j}(a,y_1)-S(a,b)\big]$ about $y_1=b$. We see that by \eqref{Sxjy}, \eqref{Sxjy2}, and the fact that $|y_1-b|\le \delta_0$ we have
\begin{align}\label{taylorSxj}
\Big|\tau\cdot\left[S_{x_1^j}(a,y_1)-S_{x_1^j}(a,b)\right]\Big|&\le \Big|(y_1-b)\tau\cdot S_{x_1^jy_1}(a,b)\Big| +\Big|\tfrac{(y_1-b)^2}{2}\tau\cdot  S_{x_1^j y_1^2}(a,b)\Big|+|y_1-b|^3R_4(y_1)  \\
%&\le C|y_1-b|(2^{-\ell}+|y_1-b|) \notag \\
%& \qquad  +C|y_1-b|^2(2^{-\ell}+|y_1-b|+1)+o(\delta_0^3) \notag \\
&\le C\delta_0^2, \notag
\end{align}
where $R_4(y_1)$ is smooth and uniformly bounded. Hence
\[
\mathcal{I}\lesssim\varepsilon_1\delta_0^2,
\]
and so combining our estimates for $\mathcal{I},\mathcal{II}$, and $\mathcal{III}$, and using the fact that $\varepsilon_1=(\delta_1/\delta_0)^2$ we have
\[
\langle u_3(a,\nu),\nabla_x(\tau\cdot\Phi_b)\rangle\le C_K(\delta_1^2+\varepsilon_1\delta_0^2+\varepsilon_1\delta_1^2)\le C_K\delta_1^2,
\]
as desired.

We next prove \eqref{tangent}. Expanding the left hand side about $(x_1,y_1)=(a,\nu)$ we see that
\begin{align*}
\langle u_2(a,\nu),\nabla_x(\tau\cdot\Phi_b)\rangle &= (\tau_2-\nu\tau_3) \\
& \qquad + \left(\gamma_2'(a-b)-\gamma_2'(a-\nu)\right)\Big[\tau\cdot\left(S_{x_1}(x_1,y_1)-S_{x_1}(x_1,b)\right) \\
& \qquad - \tau_3\left(\gamma_2(a-\nu)-\gamma_2(a-b)+(\nu-b)\gamma_2'(a-b)\right)\Big] \\
&= (\tau_2-\nu\tau_3)\left[1+\left(\gamma_2'(a-\nu)-\gamma_2'(a-b)\right)^2\right] \\
& \qquad + (\gamma_2'(a-\nu)-\gamma_2'(a-b)) F(x_1,y_1),
\end{align*}
where 
\begin{align*}
F(x_1,y_1)&= (x_1-a)\left[\tau\cdot\left(S_{x_1^2}(a,\nu)-S_{x_1^2}(a,b)\right)\right] \\
& \qquad + (y_1-\nu)\left[\tau\cdot S_{x_1 y_1}(a,\nu)\right] \\
& \qquad + \varepsilon_1^2R_5(x_1,y_1),
\end{align*}
where $R_5$ is smooth and uniformly bounded. Using \eqref{Sxjy} and an argument similar to \eqref{taylorSxj}, along with the estimates $|x_1-a|\le \varepsilon_1$, $|y_1-\nu|\le \delta_1$, and $|\nu-b|\le \delta_0$ we see
\[
F(x_1,y_1)\lesssim \varepsilon_1\delta_0^2+\delta_1^2+\varepsilon_1^2.
\]
Thus we conclude
\begin{align*}
\langle u_2(a,\nu),\nabla_x(\tau\cdot\Phi_b)\rangle&\lesssim (2^{-\ell}+\delta_1)[1+\delta_0^2]+\delta_0(\varepsilon_1\delta_0^2+\delta_1^2+\varepsilon_1^2) \\
&\lesssim \delta_1,
\end{align*}
as desired.

\end{proof}

With Lemma \ref{plateestimates} proven, we can use integration by parts to estimate the kernel of $(Id-P_{k,\nu}^a(\delta_1))T$ in the complement of the plates. Assume first that \eqref{horplatecomp} does not hold, i.e. 
\[
|\langle u_2(a,\nu),\xi\rangle|\ge B\delta_1, 
\]
with $B\ge 2A\ge 2$. Then from inequality \eqref{tangent} in Lemma \ref{plateestimates} we get
\[
|\langle u_2(a,\nu),\nabla_x\Psi\rangle\ge (B-A)\delta_1\ge \delta_1.
\]
Define a differential operator $L$ by
\[
Lh=\left\langle u_2(a,\nu),\nabla_x\left(\frac{h}{|\langle u_2(a,\nu),\nabla_x\Psi\rangle|}\right)\right\rangle.
\]
Then by integration by parts, the kernel from \eqref{platekernel} becomes 
\[
K(\tilde{x},y,\xi)=i^N2^{-Nk}\iiint e^{i 2^k\Psi(x,\tilde{x},y,\tau,\xi)}L^N(\sigma(x,y,\tau,\xi)) \, d\tau \,dx \,d\xi.
\]
To estimate $|L^N(\sigma)|$ we use Lemma \ref{IBPlemma}. We see that $|\langle u_2(a,\nu),\nabla_x\rangle^j \sigma|\le \varepsilon_1^{-j}$ due to the support of $x_1$, so to apply the lemma we just need to check that 
\[
|\langle u_2(a,\nu),\nabla_x\rangle^j \Psi|\le C_j \varepsilon_1^{1-j} \delta_1=C_j\delta_0^{2j-2}\delta_1^{3-2j}, \qquad j\ge 2.
\]
Indeed, since $\Psi$ is linear in $x'$,
\begin{align*}
\left|\langle u_2(a,\nu),\nabla_x\rangle^j \Psi\right|&=\left|(S_{x_1}(a,\nu)-S_{x_1}(a,b))^j\partial_{x_1}^j\tau\cdot \left[ S(x_1,y_1)-S(x_1,b)\right]\right|  \\
&\le C_j\delta_0^{j+2} \\
&\le C_j\varepsilon_1^{1-j}\delta_1,
\end{align*}
using the estimate in \eqref{taylorSxj} and the fact that $\delta_1\le \delta_0\le 1$. Thus $|L^N\sigma|\lesssim_N(\varepsilon_1\delta_1)^{-N}$ by Lemma \ref{IBPlemma} and integration by parts gains a factor of
\[
C_N \left(2^{k}\varepsilon_1\delta_1\right)^{-N}\lesssim_N (2^{k-\ell})^{-N}\lesssim_N2^{-Nk/2},
\]
since $\varepsilon_1=\delta_1^2\delta_0^{-2}\ge 2^{-\ell\varepsilon/2}\ge 2^{-\ell\varepsilon}$, $\delta_1\ge 2^{-\ell(1-\varepsilon)}$, and $\ell\le k/2$. Integrating by parts in the $\xi$-variables and using the estimate obtained in \eqref{xiibp} yields the desired estimate.

Next we consider the case that \eqref{normplatecomp} does not hold, i.e. $|\langle u_3(a,\nu),\xi\rangle|\ge B\delta_1^2,$ for some $B\ge 2A$. Then by Lemma \ref{plateestimates}
\begin{equation}
|\langle u_3(a,\nu),\nabla_x \Psi\rangle|\ge (B-A)\delta_1^2\ge \delta_1^2. \label{normlowerbound}
\end{equation}
Define the differential operator $\tilde{L}$ to be
\[
\tilde{L}h=\left\langle u_3(a,\nu),\nabla_x\left(\frac{h}{|\langle u_3(a,\nu),\nabla_x\Psi\rangle|}\right)\right\rangle.
\]
Again by integration by parts the kernel in \eqref{platekernel} becomes
\[
K(\tilde{x},y,\xi)=i^N2^{-Nk}\iiint e^{i 2^k\Psi(x,\tilde{x},y,\tau,\xi)}\tilde{L}^N(\sigma(x,y,\tau,\xi)) \, d\tau \,dx \,d\xi.
\]
Again by Lemma \ref{IBPlemma} and the lower bound \eqref{normlowerbound} it suffices to check
\[
|\langle u_3(a,\nu),\nabla_x\rangle^j\Psi|\le C_j\varepsilon_1^{1-j}\delta_1^2=C_j\delta_0^2(\delta_0/\delta_1)^{2j-4}, \qquad j\ge 2.
\]
The linearity of $\Psi$ in $x'$ saves us yet again, as
\begin{align*}
|\langle u_3(a,\nu),\nabla_x\rangle^j\Psi|&=\partial_{x_1}^j\tau\cdot\left[S(x_1,y_1)-S(a,b)\right]\le C_j\delta_0^2 \le C_j\delta_0^2(\delta_0/\delta_1)^{2j-4},
\end{align*}
by a calculation and another application of \eqref{taylorSxj}. Thus $|\tilde{L}^N\sigma|\lesssim_N(\varepsilon_1\delta_1^2)^{-N}$ by Lemma \ref{IBPlemma}, and integration by parts gains a factor of 
\[
C_N(2^k\varepsilon_1\delta_1^2)^{-N}\lesssim_N(2^{k-2\ell(1-3\varepsilon/4)})^{-N}\lesssim_N 2^{-\tfrac34 k\varepsilon N}.
\]
Thus for each $\varepsilon>0$ and $N>0$ we can pick $N_1>0$ such that applying integration by parts $N_1$ times with respect to $\tilde{L}$ gains a factor of
\[
C(N,\varepsilon)2^{-Nk}.
\]
Repeated integration by parts in the $\xi$ variables using the estimate obtained by \eqref{xiibp} yields the desired estimate. 

To finish the proof of Proposition \ref{decouplingstep} we combine the decoupling estimate \eqref{decouplingplate} and the above analysis of the error to get the bound
\[
\Big\|T\Big[\sum_{\nu} f_{\nu}\Big]\Big\|_{p}\lesssim_{\varepsilon'}(\delta_0/\delta_1)^{1/2-1/p+\varepsilon'}\Big(\sum_{\nu} \Big\|T f_{\nu}\Big\|_{p}^p\Big)^{1/p}+C(\varepsilon,N)2^{-kN}\sup_{\nu}\|f_{\nu}\|_p.
\]

\subsection{Iteration of the Decoupling Step}

Let $\delta_0=2^{-\ell\varepsilon/8}$, and define $\delta_j=\delta_{j-1}2^{-\ell\varepsilon/4}$ for $j=1,2,...$ Note that this implies $\varepsilon_1=(\delta_1/\delta_0)^2=2^{-\ell\varepsilon/2}$. We will iterate the estimate in Proposition \ref{decouplingstep} until $\delta_j\le 2^{-\ell(1-\varepsilon)}$. Let $j^*$ be the smallest $j$ such that $\delta_j<2^{-\ell(1-\varepsilon)}$. Clearly $j^*\lesssim 1/\varepsilon$ and $2^{-\ell(1-\varepsilon/2)}\le \delta_{j^*}\le 2^{-\ell(1-\varepsilon)}$. Pick a lattice of points $\mathcal{Z}\subset\rr$ such that $|a-a'|=2^{-\ell\varepsilon/2}$ for $a,a'\in\mathcal{Z}$, $a\ne a'$. Note that
\[
\sum_{a\in\mathcal{Z}} \chi(\varepsilon_1^{-1}(\cdot-a))\equiv 1.
\]
Using this partition of unity we decompose our operator
\[
\A_{k,\ell} f(x)=\sum_{a\in\Z} \chi(\varepsilon_1^{-1}(x_1-a))\A_{k,\ell} f(x)=:\sum_{a\in\Z} \A_{k,\ell}^a f(x).
\]
For each $j\in\{0,1,...,j^*\}$ let $\{I_{\nu_j}\}$ denote the collection of disjoint dyadic intervals of length $\delta_j$ tiling $I$. Let $f_{\nu_j}(y_1,y')=f(y_1,y')\1_{I_{\nu_j}}(y_1)$. Then by Minkowski's and H\"older's inequalities
\begin{align}\label{basecase} 
\|\A_{k,\ell} f\|_p & \lesssim\sum_{\nu_0}\Big(\sum_{a\in\Z} \|\A_{k,\ell}^a f_{\nu_0}\|_p^p\Big)^{1/p}  \\
& \lesssim 2^{\ell\varepsilon/8p'}\Big(\sum_{\nu_0} \sum_{a\in\Z} \|\A_{k,\ell}^a f_{\nu_0}\|_p^p\Big)^{1/p}. \notag
\end{align}

The operator $\A_{k,\ell}^a$ and the function $f_{\nu_0}$ now satisfy the conditions of Proposition \ref{decouplingstep}. We claim that for each $j\le j^*$,
\begin{align} \label{induction}
\|\A_{k,\ell} f\|_{p}&\lesssim C(\varepsilon')^j  2^{\ell\varepsilon/(8p')} (\delta_0/\delta_j)^{1/2-1/p+\varepsilon'}\Big(\sum_{a\in\Z} \sum_{\nu_j} \|\A_{k,\ell}^a f_{\nu_j}\|_p^p\Big)^{1/p} \\
& \qquad  + j 2^{2\ell} C(\varepsilon')^{j-1} C(\varepsilon,N) 2^{-kN}\|f\|_p.\notag
\end{align}
The case $j=0$ follows immediately from \eqref{basecase}. Assume \eqref{induction} holds for some $j$. Then by applying Proposition \ref{decouplingstep} and the fact that $\ell^p\subset \ell^\infty$ we get
\begin{align}
\Big(\sum_{a\in\Z} \sum_{\nu_{j}}\| \A_{k,\ell}^a f_{\nu_j}\|_{p}^p\Big)^{1/p}&\le \Big(\sum_{a\in \Z} \sum_{\nu_j} \Big[C(\varepsilon')\big(\tfrac{\delta_j}{\delta_{j+1}}\big)^{1/2-1/p+\varepsilon'}\Big(\sum_{\nu_{j+1}\in I_{\nu_{j}}} \|\A_{k,\ell} f_{\nu_{j+1}}\|_p^p\Big)^{1/p} \notag \\
& \qquad + C(\varepsilon,N) 2^{-kN}\Big(\sum_{\nu_{j+1}\in I_{\nu_j}} \|f_{\nu_{j+1}}\|_p^p\Big)^{1/p}\Big]^p\Big)^{1/p}\label{applydecoupling} \\
%\end{align}
%&\le C(\varepsilon')\big(\tfrac{\delta_j}{\delta_{j+1}}\big)^{1/2-1/p+\varepsilon'}\Big(\sum_{a\in\Z}\sum_{\nu_{j+1}}\|\A_{k,\ell}^a f_{\nu_{j+1}}\|_p^p\Big)^{1/p} \notag \\
%& \qquad + C(\varepsilon,N)2^{-kN}\Big(\sum_{a\in\Z}\sum_{\nu_{j+1}}\|f_{\nu_{j+1}}\|_p^p\Big)^{1/p} \notag \\
%\begin{align*}
&\le  C(\varepsilon')\big(\tfrac{\delta_j}{\delta_{j+1}}\big)^{1/2-1/p+\varepsilon'}\Big(\sum_{a\in\Z}\sum_{\nu_{j+1}}\|\A_{k,\ell}^a f_{\nu_{j+1}}\|_p^p\Big)^{1/p} \notag \\
& \qquad + C(\varepsilon,N)2^{\ell\varepsilon/2p}2^{-kN}\|f\|_p. \notag
\end{align}
Plugging the above estimate into \eqref{induction} gives us
\begin{align*}
\|\A_{k,\ell} f\|_p &\le C(\varepsilon')^{j+1}2^{\ell\varepsilon/(8p')}\big(\tfrac{\delta_{0}}{\delta_{j+1}}\big)^{1/2-1/p+\varepsilon'}\Big(\sum_{a\in\Z}\sum_{\nu_{j+1}} \|\A_{k,\ell}^a f_{\nu_{j+1}}\|_p^p\Big)^{1/p} \\ 
& \qquad + C(\varepsilon')^j2^{\ell\varepsilon/(8p')}\big(\tfrac{\delta_0}{\delta_j}\big)^{1/2-1/p+\varepsilon'}C(\varepsilon,N)2^{-kN}\|f\|_p \\
& \qquad + j2^{2\ell}C(\varepsilon')^{j-1}C(\varepsilon,N)2^{-kN}\|f\|_p.
\end{align*}
Using the fact that $\delta_0=2^{-\ell\varepsilon/8}$, $\delta_{j}\ge 2^{\ell(1-\varepsilon/2)}$ for $j\le j^*$, and $2\le p\le 6$, the last two terms of the above inequality are bounded by
\[
(j+1)C(\varepsilon')^j2^{2\ell}C(\varepsilon,N) 2^{-kN}\|f\|_p,
\]
proving the claim. 

We apply \eqref{induction} for $j=j^*$ and use the fact that $j^*\le 4/\varepsilon$ to deduce
\begin{align}
\|\A_{k,\ell} f\|_p & \le  C(\varepsilon')^{4/\varepsilon}2^{\ell\varepsilon/8p'}2^{-\ell\varepsilon/8(1/2-1/p+\varepsilon')}2^{\ell(1-\varepsilon/2)(1/2-1/p+\varepsilon')}\\
& \qquad \times\Big(\sum_{\nu_{j^*}}\|\A_{k,\ell} f_{\nu_{j^*}}\|_p^p\Big)^{1/p} \notag \\
& \qquad + \tfrac{4}{\varepsilon}C(\varepsilon')^{4/\varepsilon}C(\varepsilon,N) 2^{-kN+2\ell}\|f\|_p  \notag \\
& \lesssim_{\varepsilon,\varepsilon'} 2^{\ell\varepsilon/8}2^{\ell(1-\varepsilon/2)(1/2-1/p+\varepsilon')}\Big(\sum_{\nu_{j^*}} \|\A_{k,\ell}f_{\nu_{j^*}}\|_p^p\Big)^{1/p} \notag \\
& \qquad + C(\varepsilon,N_1)2^{-kN_1}\|f\|_p.  \notag 
\end{align}

\section{Bounds in Sobolev Spaces}\label{PRSSection}

We prove Theorem \ref{mainthm} from Proposition \ref{decomposedfinal}. Here we refer to a Calderon-Zygmund estimate found in \cite{PrRoSe11} which we will apply to $\A_{k,\ell}$ for fixed $\ell$. Let
\[
\A_\ell=\sum_{\substack{k\ge 2\ell}} \A_{k,\ell}.
\]
We will prove for compactly supported $f$
\[
\|\A_{\ell}f\|_{F^{p,q}_{1/p}}\le 2^{-\ell\varepsilon(p)}\|f\|_{B^{p,p}_{0}}, \qquad 0< q\le 2<4<p<\infty,
\]
where $F_{s}^{p,q}$ and $B_s^{p,q}$ are respectively the Triebel-Lizorkin space and Besov spaces (see \cite{Tr83}).
Summing in $\ell$ with $q\ge 1$ we conclude that
\[
\A: B^{p,p}_{s,comp}\to F^{p,q}_{s+1/p}, \qquad q\le 2<4<p<\infty.
\]
Since $L^p_s=F^{p,2}_s\xhookrightarrow{} B^{p,p}_s$ for $p>2$ and $F^{p,q}_{s+1/p}\xhookrightarrow{} F^{p,2}_{s+1/p}=L^p_{s+1/p}$ for $q\le 2$, this implies the asserted $L^p$-Sobolev bounds for $\A$ and by a change of variables, the bounds for $A$. 

Let $P_k$ be standard Littlewood-Paley multipliers on $\rr^3$ for $k\in\nn$. Because $\nabla_{x}\tilde{\Phi}_j(x,y)$ are linearly independent, as are 
$\nabla_{y}\tilde{\Phi}_j(x,y)$, we can find $C_0>0$ such that 
\[
4C_0^{-1}|\tau|\le |(\tau\cdot \tilde{\Phi})_x|,|(\tau\cdot\tilde{\Phi})_y|\le C_0/4 |\tau|
\]
This implies the following.
\begin{lem}\label{LittlewoodPaley} Suppose $k',k''\in \nn$, $k'\ge 2\ell$ and $\max\{|k-k'|,|k-k''|\}\ge C_1$, where $C_1$ depends on $C_0$. Then 
\[
\|P_k \A_{k',\ell} P_{k''}\|_{L^p\to L^p}\le C \min\{2^{-kN},2^{-k'N},2^{-k''N}\}.
\]
\end{lem}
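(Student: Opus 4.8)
The plan is to write out the Schwartz kernel of $P_k\A_{k',\ell}P_{k''}$ as an oscillatory integral and obtain rapid decay in $\max\{k,k',k''\}$ by a non-stationary phase argument in the frequency variables dual to $P_k$ and $P_{k''}$. Write $P_k$ with multiplier $\beta(2^{-k}\xi)$ (a Littlewood-Paley bump supported in $|\xi|\simeq 2^k$) and similarly $P_{k''}$ with multiplier $\beta(2^{-k''}\zeta)$. Composing with $\A_{k',\ell}$ from \eqref{Akl}, the kernel of $P_k\A_{k',\ell}P_{k''}$ becomes
\[
    K(x,z)=\iiiint e^{i\langle x-w,\xi\rangle}e^{i\tau\cdot\tilde\Phi(w,y)}e^{i\langle y-z,\zeta\rangle}\,\beta(2^{-k}\xi)\,\chi(w_1)\chi(y_1)\,\chi_{k'}(|\tau|)\,a_{k',\ell,\pm}(y_1,\tau)\,\beta(2^{-k''}\zeta)\;d\xi\,dw\,d\tau\,dy\,d\zeta.
\]
On the support of the amplitude one has $|\tau|\simeq 2^{k'}$, $|\xi|\simeq 2^k$, $|\zeta|\simeq 2^{k''}$, and by the hypothesis $4C_0^{-1}|\tau|\le|(\tau\cdot\tilde\Phi)_w|\le (C_0/4)|\tau|$ (and likewise in $y$). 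Integrating in $w$ first: the $w$-gradient of the phase is $-\xi+(\tau\cdot\tilde\Phi)_w$; since $|(\tau\cdot\tilde\Phi)_w|\simeq 2^{k'}$ and $|\xi|\simeq 2^k$, if $|k-k'|\ge C_1$ (with $C_1$ large relative to $C_0$) the two terms cannot cancel and $|\nabla_w(\text{phase})|\gtrsim \max\{2^k,2^{k'}\}$. Higher $w$-derivatives of the phase are $O(2^{k'})$ and higher $w$-derivatives of the amplitude are $O(1)$ in $w$, so Lemma~\ref{IBPlemma} gives, after $N$ integrations by parts in $w$, a gain of $(\max\{2^k,2^{k'}\})^{-N}$. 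Symmetrically, integrating in $y$: the $y$-gradient of the phase is $\zeta+(\tau\cdot\tilde\Phi)_y$, of size $\gtrsim\max\{2^{k''},2^{k'}\}$ when $|k''-k'|\ge C_1$; here the amplitude also carries the cutoffs $a_{k',\ell,\pm}(y_1,\tau)$ and $\chi(y_1)$, whose $y_1$-derivatives are $O(2^{\ell})$, but since $\ell\le k'/2$ these are dominated and Lemma~\ref{IBPlemma} again applies, yielding a gain of $(\max\{2^{k''},2^{k'}\})^{-N}$.

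There are two cases according to which of $|k-k'|$, $|k-k''|$ is $\ge C_1$. If $|k-k'|\ge C_1$ we integrate by parts in $w$ as above; if instead only $|k-k''|\ge C_1$, note that since $\tilde\Phi(w,y)=w'-y'+S(w_1,y_1)$ is linear in $w'$ we may integrate by parts in $w'$ freely to force $|\xi|\simeq|\tau'|$ on the effective support, and then $|k-k''|\ge C_1$ combined with the constraint $|\zeta|\simeq 2^{k''}$ forces a mismatch in the $y$-phase, so integration by parts in $y$ delivers decay in $2^{k''}$; alternatively one observes that whichever of $2^k,2^{k''}$ is far from $2^{k'}$ produces the decay, and the constraints $|\xi|\simeq 2^k$, $|(\tau\cdot\tilde\Phi)_w|\simeq|(\tau\cdot\tilde\Phi)_y|\simeq 2^{k'}$, $|\zeta|\simeq 2^{k''}$ pin down $\min\{2^{-kN},2^{-k'N},2^{-k''N}\}$ as the common bound. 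In all cases, after the non-stationary phase gain we integrate the remaining compactly supported amplitude over the compact ranges of $w,y,\tau$ (and over $\xi,\zeta$ at a harmless cost of $2^{3k}2^{3k''}$, absorbed by choosing $N$ larger), and conclude $\sup_x\int|K(x,z)|\,dz$ and $\sup_z\int|K(x,z)|\,dx$ are both $\lesssim\min\{2^{-kN},2^{-k'N},2^{-k''N}\}$; Schur's test then gives the asserted $L^p\to L^p$ bound.

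The main obstacle is bookkeeping the interaction of the three frequency scales $2^k,2^{k'},2^{k''}$ and the derivative loss $2^\ell$ from the fold cutoff $a_{k',\ell,\pm}$: one must verify that in the relevant integration-by-parts direction (in $w$ or in $y$) the lower bound on $|\nabla(\text{phase})|$ genuinely dominates both the higher phase derivatives ($O(2^{k'})$) and the amplitude derivatives ($O(2^\ell)$ in $y_1$, $O(1)$ elsewhere), which is exactly where the hypotheses $k'\ge 2\ell$ and $\ell\le k'/2$ and the choice of $C_1$ in terms of $C_0$ are used. This is routine given Lemma~\ref{IBPlemma}, so no essential difficulty remains beyond careful case analysis.
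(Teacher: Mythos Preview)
Your proposal is correct and follows essentially the same route as the paper: write the Schwartz kernel of $P_k\A_{k',\ell}P_{k''}$ as an oscillatory integral in the intermediate spatial variables $(w,y)$ and the frequency variables $(\xi,\tau,\zeta)$, then exploit the frequency mismatch via non-stationary phase in $(w,y)$ and conclude with Schur's test. The paper does the integration by parts jointly in $(w,z)$ rather than splitting into $w$- and $y$-cases as you do, and is somewhat more terse (in particular it does not spell out, as you correctly do, that the $2^{\ell}$ loss from $\partial_{y_1}a_{k',\ell,\pm}$ is harmless because $\ell\le k'/2$), but the argument is the same.
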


\bp[Proof of Lemma \ref{LittlewoodPaley}] We follow a similar argument to that laid out in \cite{Se93}. Note that the kernel of the operator $P_k \A_{k',\ell} P_{k''}$ is given by
\begin{align*}
\int\int\int\int\int e^{i\left[\langle x-w,\eta\rangle+\tau\cdot\tilde{\Phi}(w,z)+\langle z-y,\xi\rangle\right]} \chi_1(2^{1-k}|\eta|)\chi_1(2^{1-k'}|\tau|)\chi_1(2^{1-k''}|\xi|) \\
\qquad \times a_{k,\ell,\pm}(z_1,\tau) \chi(|w|)\chi(|x|)\, dw \,dz \,d\tau \,d\eta \,d\xi.
\end{align*}
Our assumption on $\Phi$ implies that if $\max\{|k-k'|,|k'-k''|\}>C_1$ we have
\[
\nabla_{(z,w)}\left[ \langle x-w,\eta\rangle+\tau\cdot\tilde{\Phi}(w,z)+\langle z-y,\xi\rangle\right]\ge c \max\{2^k,2^{k'},2^{k''}\}.
\]
Thus we integrate by parts in the $(w,z)$ variables to get the above bound on the kernel, implying by Minkowski the desired bound on $L^p$. 

\ep

Using the lemma above and an argument similar to a part of the proof of Lemma 2.1 in \cite{Se93}, we can reduce the proof of Theorem \ref{mainthm} to the estimate
\begin{equation}\label{PRSinequality}
    \Big\|\Big(\sum_{k\ge 2\ell} \big|2^{k/p}P_k\A_{k+s_1,\ell}P_{k+s_2} f\big|^q\Big)^{1/q}\Big\|_{L^p}\le C 2^{-\ell\varepsilon(p)}\Big\|\Big(\sum_{k>0} |P_{k+s_2}f|^p\Big)^{1/p}\Big\|_{L^p}.
\end{equation}

To prove \eqref{PRSinequality} we apply the main result from \cite{PrRoSe11}.

\begin{thm}[\cite{PrRoSe11}]\label{PRS}
Let $T_k$ be a family of operators on Schwartz functions by
\[
    T_k f(x)=\int K_k(x,y) f(y) \, dy.
\]
Let $\phi\in \mathcal{S}(\rr^3)$, $\phi_k=2^{3k}\phi(2^k\cdot)$, and $\Pi_k f=\phi_k*f$. Let $\varepsilon>0$ and $1<p_0<p<\infty$. Assume $T_k$ satisfies
\begin{align}\label{PRSLp}
    \sup_{k>0} 2^{k/p}\|T_k\|_{L^p\to L^p}&\le A \\
    \sup_{k>0} 2^{k/p_0}\|T_k\|_{L^{p_0}\to L^{p_0}}&\le B_0. \label{PRSLpo}
\end{align}
Further let $A_0 \ge 1$, and assume that for each cube $Q$ there is a measurable set $E_Q$ such that
\begin{equation}\label{EQbound}
    |E_Q|\le A_0\max\{|Q|^{2/3},|Q|\},  
\end{equation}
and for every $k\in\nn$ and every cube $Q$ with $2^k {\rm diam }(Q)\ge 1,$
\begin{equation}\label{EQintegral}
    \sup_{x\in Q} \int_{\rr^d\setminus E_Q} |K_k(x,y)| \, dy\le B_1 \max\left\{\left(2^k{\rm diam}(Q)\right)^{-\varepsilon},2^{-k\varepsilon}\right\}.
\end{equation}
Let 
\[
\mathcal{B}=B_0^{q/p}(A A_0^{1/p}+B_1)^{1-q/p}.
\]
Then for any $q>0$ there is a $C$ depending on $\varepsilon,p,p_0,q$ such that
\begin{equation}\label{PRSestimate}
    \Big\|\Big(\sum_k 2^{kq/p}|P_k T_k f_k|^q\Big)^{1/q}\Big\|_{p}\le C A \left[\log\left(3+\tfrac{\mathcal{B}}{ A}\right)\right]^{1/q-1/p}\Big(\sum_k \|f_k\|_p^p\Big)^{1/p}.
\end{equation}
\end{thm}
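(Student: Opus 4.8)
The plan is to prove Theorem~\ref{PRS} by a vector-valued Calderón--Zygmund argument adapted to the Littlewood--Paley structure, in the spirit of \cite{Se93}: the role of the projectors $P_k$ is to convert the individual $T_k$ (which need not be singular integrals) into a family to which an $L^p(\ell^q)$ Calderón--Zygmund theory applies. After normalizing so that $\sum_k\|f_k\|_p^p=1$ and $A=1$ (legitimate since $A$, $B_0$, $B_1$ scale linearly in $T_k$ while $A_0\ge 1$ is scale-free), I would fix a height $\lambda>0$ and, for each $k$, perform a Calderón--Zygmund decomposition of $f_k$ at level $\lambda$, writing $f_k=g_k+b_k$ with $b_k=\sum_{Q\in\mathcal Q_k}b_{k,Q}$, where the $Q\in\mathcal Q_k$ are pairwise disjoint dyadic cubes satisfying $\sum_k\sum_{Q\in\mathcal Q_k}|Q|\lesssim\lambda^{-p}$, $\int b_{k,Q}=0$, $\|b_{k,Q}\|_1\lesssim\lambda|Q|$, and $\|g_k\|_\infty\lesssim\lambda$, $\|g_k\|_p^p\lesssim\|f_k\|_p^p$.

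For the good part I would invoke the $L^{p_0}$ hypothesis \eqref{PRSLpo}. The decisive structural point is $p_0<p$: the gain $2^{-k/p_0}$ strictly dominates the loss $2^{k/p}$, so after inserting the projectors $P_k$ and using Littlewood--Paley almost-orthogonality one controls $\big\|\big(\sum_k 2^{kq/p}|P_kT_kg_k|^q\big)^{1/q}\big\|_p$ by summing a convergent geometric series in $k$, which produces the $B_0^{q/p}$ ingredient of $\mathcal B$. For the bad part I would work outside the exceptional set $E^*:=\bigcup_k\bigcup_{Q\in\mathcal Q_k}CE_Q$. Its measure is controlled by splitting $\mathcal Q_k$ according to the size of $Q$: when $|Q|\ge 1$ the bound \eqref{EQbound} gives $|E_Q|\lesssim A_0|Q|$ and the sum is $\lesssim A_0\lambda^{-p}$ directly, while for small cubes one has only the weaker $|E_Q|\lesssim A_0|Q|^{2/3}$ (this branch reflecting the fold geometry, where the kernels concentrate near a hypersurface), which still sums after a Hölder step using disjointness and $\sum_Q|Q|\lesssim\lambda^{-p}$. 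On the complement of $E^*$, for cubes with $2^k\diam(Q)\ge 1$ the kernel estimate \eqref{EQintegral}, combined with the cancellation $\int b_{k,Q}=0$ and the smoothing afforded by $P_k$ at frequency $2^k$, yields $\int_{\rr^d\setminus E^*}|P_kT_kb_{k,Q}|\,dx\lesssim B_1\max\{(2^k\diam(Q))^{-\varepsilon},2^{-k\varepsilon}\}\|b_{k,Q}\|_1$; cubes with $2^k\diam(Q)<1$ are treated directly from the $L^p$ bound \eqref{PRSLp}, which is affordable precisely because $|Q|$ is then small. Summing over $Q$ and $k$ (the $\varepsilon$-powers forcing convergence) produces the contribution governed by $AA_0^{1/p}+B_1$.

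Putting the two parts together gives, after optimizing the auxiliary splitting, a distributional inequality for $\mathcal Sf:=\big(\sum_k 2^{kq/p}|P_kT_kf_k|^q\big)^{1/q}$ in which a ``main'' term of strength $\sim A$ competes with a ``tail'' term of strength $\sim\mathcal B$; the standard device of choosing $\lambda$ to balance the two and then integrating $\int_0^\infty\lambda^{p-1}|\{\mathcal Sf>\lambda\}|\,d\lambda$ is exactly what generates the borderline factor $[\log(3+\mathcal B/A)]^{1/q-1/p}$. The step I expect to be the main obstacle is the bookkeeping at the interface of \emph{spatial and frequency scales}: hypothesis \eqref{EQintegral} applies only when $2^k\diam(Q)\ge 1$, so each $\mathcal Q_k$ must be split into cubes large and small relative to $2^{-k}$, with the small cubes handled crudely through \eqref{PRSLp} while one checks that both their total measure and the total measure of the dilates $CE_Q$ (where only the $|Q|^{2/3}$ estimate is available) stay under control, and one must track the interaction of $P_k$ with $b_{k,Q}$ in the regime $2^k\diam(Q)\sim 1$ carefully enough that the geometric sums in $k$ and the sums over $Q$ both converge with the asserted constants.
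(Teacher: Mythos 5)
The theorem is not proved in this paper; it is quoted, with attribution, from Pramanik, Rogers, and Seeger \cite{PrRoSe11} and then applied as a black box in Section~\ref{PRSSection}. There is therefore no ``paper's own proof'' to compare your attempt against, and any meaningful comparison would have to be made with the argument in \cite{PrRoSe11} itself.

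That caveat aside, two structural features of the statement cast doubt on the mechanism you propose. First, the exponent $1/q-1/p$ on the logarithm is exactly the loss incurred by a single H\"older step converting an $\ell^q_k$ norm into an $\ell^p_k$ norm over a block of about $\log(3+\mathcal{B}/A)$ consecutive indices $k$. This strongly suggests that the argument is organized around splitting the $k$-sum into a critical window of logarithmic length, where the $L^p$ hypothesis \eqref{PRSLp} is applied together with H\"older in $k$, and complementary ranges where the stronger $L^{p_0}$ estimate \eqref{PRSLpo} and the kernel bound \eqref{EQintegral} each furnish geometric decay in $k$ and make the tails summable. Your scheme --- Calder\'on--Zygmund decomposition at a fixed height $\lambda$, an exceptional set, and an optimization of $\lambda$ inside $\int_0^\infty\lambda^{p-1}|\{\cdot>\lambda\}|\,d\lambda$ --- is the classical device for producing a \emph{linear} logarithm with a \emph{universal} exponent, and you give no reason why it would instead yield the exponent $1/q-1/p$, which can even be negative when $q>p$ (so that the logarithm \emph{helps}). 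Second, you invoke \eqref{PRSLpo} to control the good part; but the good function is bounded and has no cancellation, so the natural tool there is the $L^p$ bound rather than a lower-exponent estimate, which is typically worse on bounded data, and you would have to explain how exactly the factor $B_0^{q/p}$ is generated by that step. Finally, the place you yourself flag as the main obstacle --- handling cubes with $2^k\,\diam(Q)<1$, where \eqref{EQintegral} does not apply and one must instead extract a gain $2^k\diam(Q)$ from the vanishing mean of $b_{k,Q}$ together with the smoothness of $P_k$ at scale $2^{-k}$, and then check that the union of the dilated sets $E_Q$ with only the $|Q|^{2/3}$ measure bound stays summable --- is left as a difficulty rather than resolved, so as written the sketch does not yet constitute a proof.
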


We apply this theorem on the family of operators $T_k:=\A_{k,\ell}$ for $k\ge 2\ell$ (here $\ell$ is fixed). By Proposition \ref{decomposedfinal} the assumptions \eqref{PRSLp} and \eqref{PRSLpo} are satisfied with $A\lesssim 2^{-\ell\varepsilon(p)}$ and $B_0\lesssim 2^{-\ell\varepsilon(p_0)}$. We next check assumptions \eqref{EQbound} and \eqref{EQintegral}. For a given cube $Q$ with center $x_Q$ let
\[
E_Q=\{y \, : \, |(x')^Q-y'+S(x_1^Q,y_1)|\le C 2^{\ell}\diam(Q)\}
\]
if $\diam(Q)<1$, and a cube centered at $x^Q$ of diameter $C2^{\ell}\diam(Q)$ if $|Q|\ge 1$. By an integration by parts argument we derive the bound
\[
|K_k(x,y)|\lesssim_N\frac{2^{2k}}{(1+2^{k-\ell}|(x')^Q-y'+S(x_1^Q,y_1)|)^N}.
\]
Then clearly assumptions \eqref{EQbound} and \eqref{EQintegral} are satisfied with $A_0\lesssim 2^{3\ell}$ and $B_1\lesssim 2^{2\ell}$ respectively. Theorem \ref{PRS} then implies \eqref{PRSinequality} with $\Pi_k=P_{k+s_1}$ and $f_k=P_{k+s_2} f$, finishing the proof of Theorem \ref{mainthm}.

\subsection{Application to Sobolev Spaces Adapted to Heisenberg Translation} \label{HeisenbergSobolev}

Here we discuss the properties of an analogue of the Euclidean Sobolev spaces, given by the norm
\[
\|f\|_{L^p_s(\mathbb{H}^1)}=\Big\|\sum_{\lambda\in \Lambda} \mathcal{R}_\lambda D^s \psi_0 \mathcal{R}_{\lambda^{-1}} f\Big\|_{L^p}.
\]
This norm is a somewhat natural choice for a Sobolev space on $\mathbb{H}^1$ for three reasons. First, the Euclidean Sobolev norm and the Heisenberg-Sobolev norm are comparable for functions supported near the origin. Second, if we replace Heisenberg translations over $\Lambda$ with Euclidean translations over the integers (denote these translations $\tau_n$) we see that
\begin{align*}
    \Big\|\sum_{n\in\zz} \tau_n D^s \psi_0 \tau_{-n} f\Big\|_p&=\Big\|\sum_{n\in\zz} \tau_n D^s \tau_{-n}\psi_{n} f\Big\|_p \\
    &=\Big\|D^s \sum_{n\in\zz} \psi_n f\Big\|_p \\
    &=\|D^s f\|_p,
\end{align*}
assuming that $\sum_{n\in\zz} \psi_n\equiv 1$. So the only obstruction between this space and the standard (Euclidean) Sobolev space is the fact that $D^s$ does not commute with Heisenberg translations, making it a natural analogue of the Sobolev space. Third, this norm is independent of our choice of smooth cutoff function $\psi$. We prove this in Appendix \ref{triebelheisenbergappendix}

\subsubsection{Proof of Theorem \ref{heisensobocor}}

We use almost disjoint support of $\psi_\lambda$ and the fact that $A$ commutes with Heisenberg translation to show
\[
\|A f\|_{L^p_{1/p}(\mathbb{H}^1)}\lesssim \Big(\sum_{\lambda\in\Lambda} \|\mathcal{R}_\lambda D^{1/p} \psi_0 A \mathcal{R}_{\lambda^{-1}} f\|_p^p\Big)^{1/p}.
\]
We first remove the right translation by $\lambda$ by an affine change of variables. We observe from \eqref{Agamma} that for $\mathcal{F}$ a fixed dilate of the support of $\psi_0$ we have $\psi_0 A \mathcal{R}_{\lambda^{-1}} f=\psi_0 A \1_{\mathcal{F}}\mathcal{R}_{\lambda^{-1}}f$. This combined with Theorem \ref{mainthm} gives
\begin{align*}
    \Big(\sum_{\lambda\in\Lambda} \|D^{1/p} \psi_0 A \mathcal{R}_{\lambda^{-1}} f\|_p^p\Big)^{1/p}&\lesssim \Big(\sum_{\lambda\in\Lambda} \|\1_{\mathcal{F}}\mathcal{R}_{\lambda^{-1}} f\|_p^p\Big)^{1/p} \\
    &\lesssim \|f\|_p,
\end{align*}
finishing the proof.

%t*y*D[0](g)(x - y) - t*(g(a - n) + diff(Q(a), a)) + t*g(x - y) - s*diff(P(a), a) + s*diff(P(x), x) + t*diff(Q(x), x) - (n*t - s)*D[0](g)(a - n) - s*D[0](g)(x - y)

%Sage Code:

\subsection{The proof of Corollary \ref{corollary}}

Theorem \ref{mainthm} establishes the estimates for $p>4$. For $2\le p<4$ the estimates in Corollary \ref{corollary} follow from interpolation with the $L^2$ regularity estimates of \cite{GrSe94}. As $A$ is clearly bounded on $L^1$, all that remains is to prove estimates for $1<p<2$. We prove a more general estimate that can apply to the fold and finite type conditions introduced in Section \ref{background}. 
\begin{prop}\label{Hardyprop}
        Suppose $\gamma''\ne 0$. Assume there exists $\alpha>0$ such that for $f\in L^2(\rr^3)$ with compact support,
        \begin{equation}\label{L2Hardy}
            \|Af\|_{L^2_\alpha(\rr^3)}\le C \|f\|_{L^2(\rr^3)}. 
        \end{equation}
        Then for $1<p<2$, $A$ is bounded from $L^p_{comp}(\rr^3)$ to $L^p_{\alpha(p)}(\rr^3)$ where $\alpha(p)=(2\alpha-\tfrac{2\alpha}{p})$. 
\end{prop}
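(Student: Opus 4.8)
The plan is to obtain the range $1<p<2$ by analytic (Stein) interpolation between the hypothesis \eqref{L2Hardy} and a Hardy-space estimate for $A$ itself, using the family $T_z:=D^{\alpha z}A$ on the strip $0\le\operatorname{Re}z\le1$ and then specializing to $z=\theta$ real. The first and genuinely substantive step is to show that $A$ is bounded on the \emph{Euclidean} Hardy space $H^1(\rr^3)$. Here I would exploit that $A$ is left Heisenberg convolution with the smooth compactly supported measure $\nu=\gamma_*\mu$: writing $L_t(x):=\gamma(t)^{-1}\odot x$, one has $Af=\int_0^1 (f\circ L_t)\,d\mu(t)$, where each $L_t$ is an invertible affine self-map of $\rr^3$ with unit Jacobian (left translations on $\mathbb{H}^1$ preserve Lebesgue measure) depending smoothly on $t$. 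Since $H^1(\rr^3)$ is invariant under invertible affine maps, with comparability constant controlled by the operator norms of the linear part and its inverse (immediate from the Riesz-transform or grand-maximal-function characterization, the Jacobian being $1$), and since $\supp\mu$ is compact, $\sup_{t\in\supp\mu}\|f\circ L_t\|_{H^1}\lesssim\|f\|_{H^1}$; Minkowski's inequality for the Banach space $H^1$ then gives $\|Af\|_{H^1}\le\int_0^1\|f\circ L_t\|_{H^1}\,d|\mu|(t)\lesssim\|f\|_{H^1}$, so $A\colon H^1(\rr^3)\to H^1(\rr^3)\hookrightarrow L^1(\rr^3)$. By the same token $\|f\circ L_t\|_p=\|f\|_p$, so $A$ is bounded on every $L^p(\rr^3)$; note this step uses nothing about curvature (the hypothesis $\gamma''\ne0$ enters only through \eqref{L2Hardy}).

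Next I set up the family. On the line $\operatorname{Re}z=1$ write $T_{1+it}=D^{i\alpha t}(D^{\alpha}A)$; since the multiplier of $D^{i\alpha t}$ has modulus $1$, \eqref{L2Hardy} gives $\|T_{1+it}f\|_2=\|D^{\alpha}Af\|_2\lesssim\|f\|_2$ uniformly in $t$ (and \eqref{L2Hardy} in fact forces $D^\alpha A$ to be bounded on all of $L^2$, as compactly supported $L^2$ functions are dense and $D^\alpha$ is closed). On the line $\operatorname{Re}z=0$, $T_{it}=D^{i\alpha t}A$; the imaginary-order Bessel potential $D^{i\alpha t}=(I-\Delta)^{i\alpha t/2}$ has symbol obeying Hörmander–Mikhlin bounds with constants polynomial in $t$, hence $\|D^{i\alpha t}\|_{H^1\to H^1}\lesssim(1+|t|)^{N}$, and composing with the first step, $\|T_{it}f\|_1\le\|D^{i\alpha t}Af\|_{H^1}\lesssim(1+|t|)^{N}\|f\|_{H^1}$. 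The map $z\mapsto T_z$ is analytic on the strip with admissible growth.

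Then I would invoke analytic interpolation for this family, allowing $H^1$ as the endpoint at $\operatorname{Re}z=0$, so that the relevant intermediate space is $[H^1,L^2]_\theta=L^p$ with $1/p=1-\theta/2$. This yields that $T_\theta=D^{\alpha\theta}A$ is bounded $L^p(\rr^3)\to L^p(\rr^3)$ for each $\theta\in(0,1)$, where $1/p=1-\theta/2$, i.e.\ $\theta=2(1-1/p)=2/p'$. Equivalently $A$ maps $L^p(\rr^3)$—in particular $L^p_{comp}(\rr^3)$—boundedly into $L^p_{\alpha(p)}(\rr^3)$ with $\alpha(p)=\alpha\theta=2\alpha(1-1/p)=2\alpha-\tfrac{2\alpha}{p}$, and $\theta\in(0,1)$ corresponds precisely to $1<p<2$, which is the assertion.

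The main obstacle is the Hardy-space bound $A\colon H^1\to H^1$: the difficulty is that $A$ commutes with Heisenberg, not Euclidean, translations, so it is not a Euclidean Fourier multiplier and one cannot simply invoke the usual ``convolution with a finite measure is bounded on $H^1$''; writing $A$ as an $L^1(\mu)$-average of pullbacks by the measure-preserving affine maps $L_t$ and using affine invariance of $H^1$ is what resolves it. Everything after that is the standard complex-interpolation passage from an $L^2$-Sobolev bound to $L^p$-Sobolev bounds for $p<2$, the only care needed being the polynomial-in-$t$ control of the imaginary Bessel potentials on $H^1$ and the legitimacy of $H^1$ as a Stein-interpolation endpoint.
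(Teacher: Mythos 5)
Your proof is correct but takes a genuinely different route from the paper's. The paper uses the analytic family $T_z f = A[\chi(I-\Delta)^{z/2}f]$, placing the Bessel potential on the \emph{input} side; after Fefferman--Stein interpolation this gives $\|A[\chi(I-\Delta)^{\alpha(p)/2}f]\|_p\lesssim\|f\|_p$, and since $A$ does not commute with $(I-\Delta)^{\alpha(p)/2}$ one must still convert this into $\|Af\|_{L^p_{\alpha(p)}}\lesssim\|f\|_p$, which the paper does by a Littlewood--Paley commutation argument resting on Lemma~\ref{LittlewoodPaley}. You instead take $T_z=D^{\alpha z}A$, placing the Bessel potential on the \emph{output} side, so that $T_\theta\colon L^p\to L^p$ is literally the statement $A\colon L^p\to L^p_{\alpha\theta}$ and the final commutation step vanishes. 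The price is that at the $\mathrm{Re}\,z=0$ endpoint the paper's family only requires $A\colon L^1\to L^1$ (feeding $\mathcal{H}^1$ into $(I-\Delta)^{iy/2}$ before $A$), whereas yours requires the stronger $A\colon\mathcal{H}^1\to\mathcal{H}^1$; you supply this correctly from the fact that the maps $L_t(x)=\gamma(t)^{-1}\odot x$ are unit-Jacobian affine bijections with linear parts ranging over a compact set, combined with the affine invariance of $\mathcal{H}^1(\rr^3)$ (via the grand-maximal or atomic characterization) and Minkowski's integral inequality in the Banach space $\mathcal{H}^1$. Both proofs rely on the same Stein interpolation with an $\mathcal{H}^1$ endpoint and on the polynomial-in-$t$ control of $(I-\Delta)^{it/2}$, so the substantive difference is this trade: you prove one extra Hardy-space bound for $A$ and in exchange delete the paper's entire final Littlewood--Paley argument, which I find the cleaner of the two.
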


Note that $\alpha(2)=\alpha$. By \cite{GrSe94} the estimate \eqref{L2Hardy} holds with $\alpha=1/4$, proving Corollary \ref{corollary} for $1<p<2$.

\bp
First, we construct an analytic family of operators. Let $\chi$ be a smooth bump function supported near the origin, and define $T_z f=A[\chi (I-\Delta)^{\frac{z}{2}} f]$. Then we prove
\begin{align}
   \label{analyticL2} \|T_{z} f\|_2&\le C \|f\|_2, \qquad \hfill \mathrm{Re} \, z=\alpha \\
    \label{analyticH1} \|T_{z} f\|_{L^1}&\le C_y \|f\|_{\mathcal{H}^1}, \qquad \hfill \mathrm{Re} \, z=0.
\end{align}
Here $\mathcal{H}^1=\mathcal{H}^1(\rr^3)$ refers to the Hardy space on $\rr^3$. The estimates \eqref{analyticL2} and \eqref{analyticH1} follow from \eqref{L2Hardy}, the fact that $A$ is bounded on $L^1$, and the fact that $(I-\Delta)^{iy/2}$ is a Calderon-Zygmund operator, and thus bounded from $L^2\to L^2$ and $\mathcal{H}^1\to L^1$ with constants depending at most polynomially on $y$. Thus we can use the analytic interpolation theorem in \cite{FeSt72} to deduce that the operator
\[
T_{\alpha(p)}: L^p(\rr^3)\to L^p(\rr^3)
\] 
is bounded for $1<p<2$. Since $A$ is not translation-invariant we cannot commute $A$ and $(I-\Delta)^{\alpha(p)/2}$. However, we can use Littlewood-Paley theory to achieve nearly the same result. For $f\in L^p$ supported in $\supp\chi$ we can write
\begin{align*}
    \|A f\|_{L^p_{\alpha(p)}}\le \Big\|\Big(\sum_{k\ge 0} |2^{k\alpha(p)}P_k A\chi f|^2\Big)^{1/2}\Big\|_{L^p}.
\end{align*}
By two applications of Lemma \ref{LittlewoodPaley} and the fact that $P_k\big(2^{-2k}(I-\Delta)\big)^{\alpha(p)/2}$ is also a Littlewood-Paley multiplier of order $k$, we can estimate
\begin{align*}
    \Big\|\Big(\sum_{k\ge 0} |2^{k\alpha(p)}P_k A \chi f|^2\Big)^{1/2}\Big\|_{p}&\le \Big\|\Big(\sum_{k\ge 0} |P_k A_k\chi P_k(I-\Delta)^{\alpha(p)/2} f|^2\Big)^{1/2}\Big\|_{p}+C\|f\|_p \\
    &\le \Big\|\Big(\sum_{k\ge 0} |P_k A \chi (I-\Delta)^{\alpha(p)/2} f|^2\Big)^{1/2}\Big\|_{p}+C\|f\|_p \\
    &\le \| T_{\alpha(p)} f\|_{p} +C\|f\|_{p} \\
    &\lesssim \|f\|_{p}
\end{align*}
finishing the proof.
\ep

\appendix

\section{Properties of Heisenberg-Sobolev Space}\label{triebelheisenbergappendix}

We prove the following Proposition.

\begin{prop}
    The definition of the Heisenberg-Sobolev norm in Definition \ref{heisensobonorm} is independent of the choice of $\psi$.
\end{prop}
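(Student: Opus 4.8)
The plan is to fix a second admissible cutoff $\tilde\psi$ (so $\tilde\psi\in C^\infty_c(B_2(0))$, $0\le\tilde\psi\le 1$, $\sum_{\lambda\in\Lambda}\tilde\psi_\lambda\equiv 1$) and prove that the norms $\|\cdot\|_{L^p_s(\mathbb{H}^1)}$ built from $\psi$ and from $\tilde\psi$ are comparable; by symmetry it suffices to bound one by the other. The bridge is the $\ell^p$-superposition of ordinary Sobolev norms of the localized, recentered pieces, $N_\psi(f):=\big(\sum_{\lambda\in\Lambda}\|\psi\cdot\mathcal{R}_{\lambda^{-1}}f\|_{L^p_s(\rr^3)}^p\big)^{1/p}$. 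Since the $\lambda$-th summand of $\sum_\lambda\mathcal{R}_\lambda D^s\psi_0\mathcal{R}_{\lambda^{-1}}f$ is $\mathcal{R}_\lambda\bigl(D^s(\psi\cdot\mathcal{R}_{\lambda^{-1}}f)\bigr)$ and each $\mathcal{R}_\lambda$ is a measure-preserving affine shear, we have $\|\psi\cdot\mathcal{R}_{\lambda^{-1}}f\|_{L^p_s}=\|D^s(\psi\cdot\mathcal{R}_{\lambda^{-1}}f)\|_{L^p}$, and the proposition reduces to (i) $\|f\|_{L^p_s(\mathbb{H}^1)}\simeq N_\psi(f)$ for every admissible $\psi$, and (ii) $N_\psi(f)\simeq N_{\tilde\psi}(f)$, all constants allowed to depend on the cutoffs but not on $f$. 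I will prove (ii) first, then (i).

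For (ii): writing $\psi_\lambda=\mathcal{R}_\lambda\psi$ and using $\sum_\lambda\psi_\lambda\equiv 1$, a short computation from the anti-homomorphism law $\mathcal{R}_a\mathcal{R}_b=\mathcal{R}_{b\odot a}$ gives $\mathcal{R}_{\mu^{-1}}(\psi_\lambda f)=\mathcal{R}_{\lambda\odot\mu^{-1}}\bigl(\psi\cdot\mathcal{R}_{\lambda^{-1}}f\bigr)$, hence $\tilde\psi\cdot\mathcal{R}_{\mu^{-1}}f=\sum_\lambda\tilde\psi\cdot\mathcal{R}_{\lambda\odot\mu^{-1}}\bigl(\psi\cdot\mathcal{R}_{\lambda^{-1}}f\bigr)$. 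Because $\supp\psi,\supp\tilde\psi\subset B_2(0)$, the $\lambda$-th term vanishes unless $\lambda\odot\mu^{-1}$ lies in a fixed compact subset of $\mathbb{H}^1$, hence unless $\lambda\odot\mu^{-1}$ lies in a fixed finite set $\Lambda_0\subset\Lambda$; so the sum runs over $\lambda=\kappa\odot\mu$, $\kappa\in\Lambda_0$. Applying in turn that multiplication by the $C^\infty_c$ function $\tilde\psi$ is bounded on $L^p_s(\rr^3)$, and that for each of the finitely many $\kappa\in\Lambda_0$ the invertible affine map $\mathcal{R}_\kappa$ satisfies $\|\mathcal{R}_\kappa h\|_{L^p_s}\simeq_\kappa\|h\|_{L^p_s}$ (composition with $\mathcal{R}_\kappa$ is conjugate on $L^p_s$ to a Mikhlin multiplier), one gets $\|\tilde\psi\cdot\mathcal{R}_{\mu^{-1}}f\|_{L^p_s}\lesssim\sum_{\kappa\in\Lambda_0}\|\psi\cdot\mathcal{R}_{(\kappa\odot\mu)^{-1}}f\|_{L^p_s}$. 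Raising to the $p$-th power, summing over $\mu\in\Lambda$, interchanging the two sums, and reindexing $\lambda=\kappa\odot\mu$ (a bijection of $\Lambda$ for each fixed $\kappa$) yields $N_{\tilde\psi}(f)^p\lesssim|\Lambda_0|\,N_\psi(f)^p$; exchanging the roles of $\psi$ and $\tilde\psi$ gives the reverse inequality.

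For (i), set $g_\lambda:=D^s(\psi\cdot\mathcal{R}_{\lambda^{-1}}f)$, so $\|f\|_{L^p_s(\mathbb{H}^1)}=\|\sum_\lambda\mathcal{R}_\lambda g_\lambda\|_{L^p}$ and $\|g_\lambda\|_{L^p}=\|\psi\cdot\mathcal{R}_{\lambda^{-1}}f\|_{L^p_s}$. The upper bound $\|f\|_{L^p_s(\mathbb{H}^1)}\lesssim N_\psi(f)$ follows from the finite overlap of the $\odot$-translates $\{B_C(0)\odot\lambda\}_{\lambda\in\Lambda}$ (valid since $\Lambda$ is a uniform lattice and $B_C(0)$ is fixed — the relevant slabs are thin and tilted but still meet only $O(1)$ lattice points) together with the rapid off-diagonal decay of the Schwartz kernel of the classical order-$s$ pseudodifferential operator $D^s=(I-\Delta)^{s/2}$: since $\psi\cdot\mathcal{R}_{\lambda^{-1}}f$ is supported in the fixed ball $B_2(0)$ one gets $\|g_\lambda\|_{L^p(\{|x|\sim 2^j\})}\lesssim_N 2^{-jN}\|g_\lambda\|_{L^p}$ for $j$ large (also using $L^p$-boundedness of $(I-\Delta)^{-s/2}$ for $s\ge 0$; for $s<0$ a short pseudodifferential argument replaces this), so splitting $g_\lambda$ into dyadic annuli about the origin and summing the bounded-overlap estimate at scale $j$ against $2^{-jN}$ closes the bound. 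The reverse bound $N_\psi(f)\lesssim\|f\|_{L^p_s(\mathbb{H}^1)}$ is the delicate half: one localizes $\sum_\lambda\mathcal{R}_\lambda g_\lambda$ near each $B_C(0)\odot\lambda$, recovers the piece up to rapidly decaying errors by applying $\mathcal{R}_\lambda D^{-s}\mathcal{R}_{\lambda^{-1}}$ (an $L^p$-isometric conjugate of $(I-\Delta)^{-s/2}$, hence bounded uniformly in $\lambda$), and then reassembles the $\ell^p$-sum using the Leibniz structure of $D^s$ together with the finite overlap — the point being that $\mathcal{R}_\lambda D^s\mathcal{R}_{\lambda^{-1}}=(I-\Delta_\lambda)^{s/2}$ for the $\lambda$-dependent elliptic $\Delta_\lambda$, whose top-order part $\tfrac14(\lambda_1^2+\lambda_2^2)\partial_3^2$ grows with $\lambda$ precisely like the weight that $N_\psi$ puts on the recentered derivatives, so that $\|f\|_{L^p_s(\mathbb{H}^1)}$ genuinely controls those weighted local Sobolev magnitudes; this reduces to a standard ellipticity argument (the top-order term dominates) for integer $s$ and to interpolation for general $s$.

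The main obstacle is exactly this lower bound $N_\psi(f)\lesssim\|f\|_{L^p_s(\mathbb{H}^1)}$ in (i): finite overlap alone does not give it (distinct pieces $\mathcal{R}_\lambda g_\lambda$ can interfere destructively on their common supports), and one must extract each recentered Sobolev magnitude of $f$ out of the single function $\sum_\lambda\mathcal{R}_\lambda D^s\psi_0\mathcal{R}_{\lambda^{-1}}f$ — this is the $\mathbb{H}^1$-analogue of the classical but non-trivial fact that Euclidean $L^p_s$-norms admit a partition-of-unity characterization, and it is the only place where genuine analysis (rather than the bookkeeping of (ii) and the upper half of (i)) enters; the new feature, which is harmless but must be watched, is that the change of variables $\mathcal{R}_\lambda$ becomes eccentric as $|\lambda|\to\infty$ yet is only ever applied to functions localized at unit scale. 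The subsidiary facts used — that $C^\infty_c$ functions are $L^p_s$-multipliers, that a fixed invertible affine change of variables preserves $L^p_s$ up to constants, and the $s<0$ case of the tail estimate — are standard.
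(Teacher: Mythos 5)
Your overall plan is sound and runs parallel to the paper's, but it is organized differently and one step is described in a way that suggests a misdiagnosis of where the work lies.

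Both arguments rest on the same three facts: (a) the group-law rearrangement $\mathcal{R}_{\mu^{-1}}(\psi_\lambda f)=\mathcal{R}_{\lambda\odot\mu^{-1}}(\psi\cdot\mathcal{R}_{\lambda^{-1}}f)$ together with proper discontinuity, which makes the $\lambda$-sum finite for fixed $\mu$; (b) the passage between $\|\sum_\lambda h_\lambda\|_p$ and $\bigl(\sum_\lambda\|h_\lambda\|_p^p\bigr)^{1/p}$ for pieces $h_\lambda$ essentially supported on the boundedly-overlapping translates $B_C(0)\odot\lambda$; and (c) the stability of the Euclidean $L^p_s$-norm under $\mathcal{R}_\kappa$ for $\kappa$ in a fixed finite set. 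The paper works directly with the $L^p_s(\mathbb{H}^1)$-norm, applying (a), (b), (c) inline in a single chain of inequalities, and proves (c) explicitly by a Triebel--Lizorkin argument with a $\kappa$-dependent shear of the dyadic decomposition. You instead factor the argument through the auxiliary quantity $N_\psi(f)$: your step (ii) reproduces (a) and (c), and your step (i) is exactly (b). So the ingredients are identical; the route is longer.

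The one real concern is your description of the lower bound in step (i). You correctly flag that finite overlap alone does not give $N_\psi(f)\lesssim\|f\|_{L^p_s(\mathbb{H}^1)}$ because of possible cancellation, but the fix you sketch — recovering each piece via $\mathcal{R}_\lambda D^{-s}\mathcal{R}_{\lambda^{-1}}$, then invoking Leibniz, ellipticity of $\Delta_\lambda$ with its $\lambda$-growing top-order term, and interpolation for non-integer $s$ — is both more than is needed and is not actually carried to a conclusion. The right argument is a straightforward core-plus-tails absorption: writing $h_\lambda=\mathcal{R}_\lambda D^s(\psi\,\mathcal{R}_{\lambda^{-1}}f)$, one has $\|h_\lambda\|_{L^p(\rr^3\setminus (B_C(0)\odot\lambda))}\lesssim_N C^{-N}\|h_\lambda\|_p$ because $\psi\,\mathcal{R}_{\lambda^{-1}}f$ is compactly supported and the kernel of $D^s$ decays rapidly; hence on the core $B_C(0)\odot\lambda$ the sum $\sum_\mu h_\mu$ agrees with $h_\lambda$ up to a tail error whose $\ell^p$-sum over $\lambda$ is a small multiple of $\bigl(\sum_\mu\|h_\mu\|_p^p\bigr)^{1/p}$, and one absorbs. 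The eccentricity of $\mathcal{R}_\lambda$ as $|\lambda|\to\infty$ — which you rightly flag — only affects the overlap count at dyadic scale $2^j$ away from the core, and is beaten by choosing $N$ large. No Leibniz rule, no ellipticity, no interpolation is involved; the operators $\mathcal{R}_\lambda D^s\mathcal{R}_{\lambda^{-1}}$ never need to be analyzed as pseudodifferential operators. Since this is precisely the same finite-overlap-with-Schwartz-tails equivalence the paper also uses (and likewise leaves partly implicit), your approach does close, but the ``genuine analysis'' you locate there is a red herring — the genuinely nontrivial input is (c), which you dismiss as standard and the paper proves carefully.
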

Suppose $\{\tilde{\psi}_\lambda\}_{\lambda\in\Lambda}$ is another partition of unity satisfying the conditions in Definition \ref{heisensobonorm}. Observe that because the action of $\Lambda$ on $\mathbb{H}^1$ is properly discontinuous there is a finite set $\B\subset \Lambda$ contained in the Euclidean ball $B_4(0)$ (independent of ${\tilde{\psi}}$ and $\psi$) such that
\[
\psi_0=\psi_0\Big(\sum_{\sigma\in \B} {\tilde{\psi}}_\sigma\Big).
\]
Next, for each $\sigma\in \B$ and $\lambda\in \Lambda$ we have
\begin{align*}
    \psi_0{\tilde{\psi}}_\sigma \mathcal{R}_{\lambda^{-1}} f&=\psi_0 \mathcal{R}_\sigma {\tilde{\psi}}_0 \mathcal{R}_{\sigma^{-1}} \mathcal{R}_{\lambda^{-1}} f \\
    &=\mathcal{R}_\sigma \psi_{\sigma^{-1}} {\tilde{\psi}}_0 \mathcal{R}_{(\sigma\lambda)^{-1}} f.
\end{align*}
Since the supports of $\psi_\lambda$ are finitely overlapping and $\B$ is finite, we obtain
\begin{align}\label{sigmaflip}
    \Big\|\sum_{\lambda\in\Lambda} \mathcal{R}_\lambda D^s \psi_0 \mathcal{R}_{\lambda^{-1}} f\Big\|_p &=\Big\|\sum_{\lambda\in\Lambda} \mathcal{R}_\lambda D^s \sum_{\sigma\in \B} \psi_0{\tilde{\psi}}_\sigma \mathcal{R}_{\lambda^{-1}} f\Big\|_p \\
    &=\Big\|\sum_{\lambda\in\Lambda} \sum_{\sigma\in \B} \mathcal{R}_{\lambda} D^s \mathcal{R}_\sigma \psi_{\sigma^{-1}} {\tilde{\psi}}_0 \mathcal{R}_{(\sigma\lambda)^{-1}} f\Big\|_p. \notag \\
    &\simeq \Big(\sum_{\lambda\in \Lambda} \sum_{\sigma\in \B} \|\mathcal{R}_\lambda D^s \mathcal{R}_\sigma \psi_{\sigma^{-1}} {\tilde{\psi}}_0 \mathcal{R}_{(\sigma\lambda)^{-1}} f\|_p^p\Big)^\frac1p. \notag
\end{align}
Let $g_{\lambda,\sigma}=\psi_{\sigma^{-1}}{\tilde{\psi}}_0 \mathcal{R}_{(\sigma\lambda)^{-1}}f$. We prove that $\|D^s \mathcal{R}_\sigma g_{\lambda,\sigma}\|_p\simeq\|\mathcal{R}_\sigma D^s g_{\lambda,\sigma}\|_p$ uniformly in $\sigma$ and $\lambda$. To show this we need some technical details from the definition of Triebel-Lizorkin spaces (cf. \cite{Tr83,Tr92}). 

\begin{defn}
Let $\Omega$ be the collection of all sequences $\{\omega_j\}_{j=0}^\infty\subset \mathcal{S}(\rr^3)$ with the properties
\be
\item there exist positive constants $A,B,C$ such that 
\begin{align*}
    \supp\,  \omega_0&\subset \{\xi \, : \, |\xi|\le A\} \\
    \supp\,  \omega_j&\subset \{\xi \, : \, B2^{j-1}\le |\xi|\le C2^{j+1}\}, \qquad j=1,2,3,...
\end{align*}
\item for every multi-index $\alpha$ there exists $c_\alpha>0$ such that
\[
\sup_{x\in\rr^3}\sup_{j\in\nn}2^{j|\alpha|}|\partial^\alpha \omega_j(\xi)|\le c_\alpha,
\]
\item for every $\xi\in\rr^3$
\[
\sum_{j=0}^\infty \omega_j(\xi)=1.
\]
\ee
For a sequence $\{\omega_j\}\in \Omega$ we define the Triebel-Lizorkin norm
\[
\|f\|_{F^{p,q}_s}=\Big\|\Big(\sum_{j=0}^\infty|2^{js}\widecheck{\omega}_j*f|^q\Big)^{1/q} \Big\|_{L^p}.
\]
We remark that a different choice of $\{\omega_j\}$ results in an equivalent norm.
\end{defn}

Let $\{\omega_j\}\in\Omega$ with associated constants $A,B,C,c_\alpha$. Recall that $\|\mathcal{R}_\sigma g_{\lambda,\sigma}\|_{L^p_s}\simeq \|\mathcal{R}_\sigma g_{\lambda,\sigma}\|_{F^{p,2}_s}$.  
A direct calculation reveals that 
\[
    \reallywidehat{\mathcal{R}_\sigma g}(\xi)=e^{-i\langle\sigma, \xi\rangle} \widehat{g}(\xi_1+\tfrac{\sigma_2}{2}\xi_3,\xi-\tfrac{\sigma_1}{2}\xi_3,\xi_3)
\]
We define $ \vartheta(\eta)=(\eta_1-\tfrac{\sigma_2}{2}\eta_3,\eta_2+\tfrac{\sigma_1}{2}\eta_3,\eta_3).$ Then by a linear change of variables 
\begin{align*}
    \widecheck{\omega_j}* \mathcal{R}_\sigma g_{\lambda,\sigma}&=\int e^{i\langle x, \xi\rangle} \omega_j(\xi) e^{-i\langle\sigma, \xi\rangle} \widehat{g}_{\lambda,\sigma}(\eta(\xi)) \, d\xi \\
    & \int e^{i(\langle x\odot\sigma^{-1}),\eta\rangle}\omega_j( \vartheta(\eta))\widehat{g}_{\lambda,\sigma}(\eta) \, d\eta \\
    &=\mathcal{R}_{\sigma}\Big[\widecheck{\omega_j\circ  \vartheta}*g_{\lambda,\sigma}\Big].
\end{align*}

The smooth cutoff $\omega_j\circ \vartheta$, $j=1,2,3...$ is supported where 
\[
B2^{j-1}\le | \vartheta(\eta)| \le C2^{j+1}.
\]
Since $|\sigma_j|\le 4$ for all $\sigma\in\B$ these inequalities imply that 
\[
\supp\,  \omega_j( \vartheta(\eta))\subset\big\{\eta \, : \, \tfrac{B}{5} 2^{j-1}\le|\eta|\le 5C 2^{j+1}\big\}.
\]
The same argument also implies that $\supp\, \omega_0 ( \vartheta(\eta))\subset \{\eta \, : \, |\eta|\le 5A\}$. 
%Indeed, we have 
%\begin{align*}
%    |\eta_1-\tfrac{\sigma_2}{2}\eta_3|+|\eta_2+\tfrac{\sigma_1}{2}\eta_3|+|\eta_3|&\le |\eta_1|+|\eta_2|+(1+\tfrac{\sigma_1}{2}+\tfrac{\sigma_2}{2})|\eta_3| \\
%    &\le 5(|\eta_1|+|\eta_2|+\eta_3|),
%\end{align*}
%while on the other hand
%\begin{align*}
%    |\eta_1|+|\eta_2|+|\eta_3|&\le |\eta_1-\tfrac{\sigma_2}{2}\eta_3|+|\eta_2+\tfrac{\sigma_1}{2}\eta_3|+(1+\tfrac{\sigma_1}{2}+\tfrac{\sigma_2}{2})|\eta_3| \\
%    &\le 5(|\eta_1|+|\eta_2|+|\eta_3|).
%\end{align*}
Next, since $ \vartheta(\eta)$ is linear and $|\sigma_j|\le 4$ for $j=1,2,3$ we can conclude that for any multi-index $\alpha$
\[
\sup_{\eta\in\rr^3}\sup_{j\in\nn} 2^{j|\alpha|}|\partial^\alpha \omega_j( \vartheta(\eta))|\le 3^{|\alpha|}c_\alpha.
\]
Since clearly $\sum_{j=0}^\infty \omega_j( \vartheta(\eta))=1$ for every $\eta$ we conclude that $\{\tilde{\omega}_j\}_{j=0}^\infty=\{\omega_j\circ \vartheta\}_{j=0}^\infty\in \Omega$, hence 
\begin{align*}
    \|D^s \mathcal{R}_\sigma g_{\lambda,\sigma}\|_p&\simeq \Big\|\Big(\sum_{j=0}^\infty |2^{js} \widecheck{\omega}_j*\mathcal{R}_\sigma g_{\lambda,\sigma} |^2\Big)^{1/2}\Big\|_p \\
    &= \Big\|\mathcal{R}_\sigma \Big(\sum_{j=0}^\infty |2^{js}\widecheck{\omega_j\circ  \vartheta}*g_{\lambda,\sigma}|^2\Big)^{1/2}\Big\|_p \simeq \|\mathcal{R}_\sigma D^s g_{\lambda,\sigma}\|_p.
\end{align*}

Plugging this into \eqref{sigmaflip} we obtain
\begin{align*}
    \Big\|\sum_{\lambda\in\Lambda} \mathcal{R}_\lambda D^s \psi_0 \mathcal{R}_{\lambda^{-1}} f\Big\|_p&\simeq C\Big(\sum_{\lambda\in \Lambda} \sum_{\sigma\in \B} \|\mathcal{R}_{\sigma\lambda} D^s \psi_{\sigma^{-1}}{\tilde{\psi}}_{0}\mathcal{R}_{(\sigma\lambda)^{-1}}f\|_p^p\Big)^\frac1p \\
    &\le C \Big(\sum_{\tilde{\lambda}\in \Lambda} \sum_{\sigma\in \B} \|\mathcal{R}_{\tilde{\lambda}} D^s \psi_{\sigma^{-1}} {\tilde{\psi}}_0 \mathcal{R}_{{\tilde{\lambda}}^{-1}} f\|_p^p\Big)^\frac1p \\
    &\simeq \Big\|\sum_{{\tilde{\lambda}}\in\Lambda} \mathcal{R}_{\tilde{\lambda}} D^s {\tilde{\psi}}_0\Big(\sum_{\sigma\in \B} \psi_{\sigma^{-1}}\Big) \mathcal{R}_{{\tilde{\lambda}}^{-1}} f\Big\|_p \\
    &=C\Big\|\sum_{{\tilde{\lambda}}\in\Lambda} \mathcal{R}_{\tilde{\lambda}} D^s {\tilde{\psi}}_0 \mathcal{R}_{{\tilde{\lambda}}^{-1}} f\Big\|_p,
\end{align*}
proving that the Heisenberg-Sobolev norm is independent of choice of cutoff function.

\bibliography{mybib}
	\bibliographystyle{plain}

\end{document}